\numberwithin{equation}{section}
\newtheorem{theorem}{Theorem}[section]
\newtheorem{definition}{Definition}[section]
\newtheorem{lemma}{Lemma}[section]
\newtheorem{proposition}{Proposition}[section]
\newtheorem{corollary}{Corollary}[section]
\newtheorem{remark}{Remark}[section]
\newtheorem{claim}{\textit{Claim}}[section]
\begin{document}

\title[Rao-Nakra Sandwich beam model]{Studies on the Rao--Nakra Sandwich Beam: Well-Posedness, Dynamics, and Controllability}

	\author[Bautista]{George J. Bautista}
	\address{Instituto de Matemática e Estatística, Universidade Federal Fluminense, Campus  Gragoatá, 24210-201, Niterói (RJ), Brazil.}
		\email{\url{geojbs25@gmail.com}}
		\email{\url{jlimaco@id.uff.br}}

	\author[Capistrano--Filho]{Roberto de A. Capistrano--Filho*}
		\address{Departamento de Matem\'atica, Universidade Federal de Pernambuco, S/N Cidade Universit\'aria, 50740-545, Recife (PE), Brazil}
	\email{\url{roberto.capistranofilho@ufpe.br}}
	
	\author[Chentouf]{Boumediene Chentouf}
	\address{Department of Mathematics, College of Science, Kuwait University. Sabah AlSalem University City, P.O. Box 5969, Safat 13060, Shadadiya. Kuwait.}
			\email{\url{boumediene.chentouf@ku.edu.kw}}
			
			\author[Sierra]{Oscar Sierra Fonseca}
\address{Escola de Matemática Aplicada\\ Fundação Getúlio Vargas, Praia de Botafogo 190, 22250-900, Rio de Janeiro (RJ), Brazil}
\email{\url{oscar.fonseca@fgv.br}}

	\author[Limaco]{Juan Límaco}

%
	\subjclass[2020]{35Q74, 35B40, 35Q93, 3C20}
	\keywords{Rao–Nakra sandwich beam, boundary controllability, exponential stabilization, time-varying delay, Lyapunov method, observability inequality.}
	\thanks{$^*$Corresponding author: \url{{roberto.capistranofilho@ufpe.br}}}
	\thanks{Bautista was partially supported by FAPERJ under the program PDS-2024, grant number SEI-0260003/019497/2024. Capistrano–Filho was partially supported by the CAPES-COFECUB program grant number 88887.879175/2023-00, CNPq grants numbers 301744/2025-4 and 421573/2023-6, and Propesqi (UFPE). Límaco was partially supported by CNPq grant number 310860/2023-7.}

\numberwithin{equation}{section}

\begin{abstract}
In this work, we investigate the well-posedness, stabilization, and boundary controllability of a linear Rao–Nakra type sandwich beam. The system consists of three coupled equations that represent the longitudinal displacements of the outer layers and the transverse displacement of the composite beam, all of which are coupled with dynamical boundary conditions. In the first problem, time-varying interior damping and static boundary conditions with time-dependent delays are considered. Then, we establish the existence and uniqueness of solutions for the Cauchy problem associated with the damped system using semigroup theory and a classical result by Kato. Furthermore, employing a Lyapunov-based approach, we prove that the system's energy decays exponentially, despite the presence of time-varying weights and delays. In the second problem, we consider a boundary linear control system with dynamical boundary conditions, and prove its well-posedness. By deriving an observability inequality for the adjoint system and applying the Hilbert Uniqueness Method (HUM), we show that the system is null controllable. A key contribution of this work lies in handling the full three-equation coupled system, which involves significant difficulty due to the dynamic boundary conditions, resolved via appropriately constructed Lyapunov functionals and intermediate observability inequalities.
\end{abstract}

	\date{\today}

\maketitle

\section{Introduction}

\subsection{Model formulation and control problems}
Various types of beams are widely used in practice. Indeed, we encounter the classical Euler-Bernoulli beam, Rayleigh beam, Timoshenko, Bresse beam, sandwich beams, and micro-beam. One typical example of a sandwich beam is the so-called Rao-Nakra beam. The model describes the dynamics of a three-layer composite beam consisting of two stiff elastic face layers (or outer layers) bonded to a viscoelastic core. This configuration is commonly used in structural applications where vibration suppression and damping are desired, as the viscoelastic layer dissipates energy through shear deformation.

The model was first proposed by Rao and Nakra (1974) \cite{Rao1,Rao2} as an extension of classical beam theories, incorporating both transverse shear deformation and rotary inertia effects, which are neglected in simpler models like the Euler--Bernoulli or Timoshenko beams. The governing equations couple the longitudinal displacements of the outer layers with the transverse displacement of the beam, linked through the shear strain in the viscoelastic core. This system of partial differential equations captures the interaction between bending, stretching, and shear in sandwich structures and has become a benchmark model for studying vibration control, stabilization, and active/passive damping mechanisms in smart materials and composite structures.

In this work, we study two different boundary control problems associated with the Rao–Nakra type sandwich beam equations. The two problems share the same interior dynamics, that is, the same set of three partial differential equations (PDEs). In turn, they correspond to different control settings. The first problem addresses stabilization in the presence of time-varying interior damping and static boundary delay conditions subject to time-dependent delay effects. The second one is formulated as a boundary controllability problem, where the boundary conditions are dynamical. This approach allows us to analyze complementary aspects of the same PDE model, focusing on both stability and controllability properties in a unified framework. More precisely, we first consider the following system:
\begin{equation}\label{Rao_1_12}
\left\{
\begin{array}{ll}
\rho_1 h_1 u_{tt} - E_1 h_1 u_{xx} - k(-u + v + \alpha w_x) + a_1(t) u_t = 0, & x \in (0,L),\, t>0,\\[0.2em]
\rho_3 h_3 v_{tt} - E_3 h_3 v_{xx} + k(-u + v + \alpha w_x) + a_2(t) v_t = 0, & x \in (0,L),\, t>0,\\[0.2em]
\rho h w_{tt} + EI w_{xxxx} - \alpha k (-u + v + \alpha w_x)_x + a_3(t) w_t = 0, & x \in (0,L),\, t>0,\\[0.2em]
u(0,t) = v(0,t) = w(0,t) = w_x(0,t) = w(L,t) = 0, & t>0,\\[0.2em]
u_x(L,t) = \alpha_1 u_t(L,t) + \beta_1 u_t(L,t-\tau_1(t)), & t>0,\\
v_x(L,t) = \alpha_2 v_t(L,t) + \beta_2 v_t(L,t-\tau_2(t)), & t>0,\\
w_{xx}(L,t) = \alpha_3 w_{tx}(L,t) + \beta_3 w_{tx}(L,t-\tau_3(t)), & t>0,\\
u_t(L,t-\tau_1(0)) = f_0(t-\tau_1(0)) \in L^2(0,1), & 0 < t < \tau_1(0),\\
v_t(L,t-\tau_2(0)) = g_0(t-\tau_2(0)) \in L^2(0,1), & 0 < t < \tau_2(0),\\
w_{tx}(L,t-\tau_3(0)) = h_0(t-\tau_3(0)) \in L^2(0,1), & 0 < t < \tau_3(0),\\
(u,v,w)(x,0) = (u_0, v_0, w_0)(x), & x \in (0,L),\\
(u_t, v_t, w_t)(x,0) = (u_1, v_1, w_1)(x), & x \in (0,L),
\end{array}
\right.
\end{equation}
where $u$, $v$, and $w$ denote the longitudinal displacements of the outer layers and the transverse displacement of the composite. The constants $\rho_i, h_i, E_i, I_i$ ($i=1,2,3$) denote the density, thickness, Young's modulus, and moments of inertia of each layer, and $k$ is the shear modulus of the middle layer. Moreover, we set
$$
\rho h = \rho_1 h_1 + \rho_2 h_2 + \rho_3 h_3, \quad EI = E_1 I_1 + E_3 I_3, \quad \alpha = h_2 + \dfrac{1}{2} (h_1 + h_3).
$$

Furthermore, the boundary conditions involve time-varying delays $\tau_i(t)$, $i=1,2,3$, and we assume the existence of positive constants $\tau_{0i}$, $M_i$, and $d_i<1$ such that the delays satisfy the following assumptions:
\begin{equation}\label{eq:TauCond}
\begin{cases}
0 < \tau_{0i} \leq \tau_i(t) \leq M_i, \quad \dot{\tau}_i(t) \leq d_i < 1, & \forall t \ge 0,\\
\tau_i \in W^{2,\infty}([0,T]), & \forall T>0.
\end{cases}
\end{equation}
The damping coefficients $a_i: \mathbb{R}_+ \to (0,+\infty)$ are non-increasing $C^1$ functions and satisfy
\begin{align}\label{damping_123}
0 < a_{0i} \le a_i(t)<  b_{0i},\quad \forall t \ge 0.
\end{align}
Finally, the feedback gains $\alpha_i, \beta_i$ obey
\begin{align}
\alpha_1 &> \dfrac{|\beta_1|}{2 E_1 h_1} \left( \dfrac{(E_1 h_1)^2 + 1 - d_1}{1-d_1} \right), & 0 \le d_1 < 1, \label{eq:CCond2_1}\\
\alpha_2 &> \dfrac{|\beta_2|}{2 E_3 h_3} \left( \dfrac{(E_3 h_3)^2 + 1 - d_2}{1-d_2} \right), & 0 \le d_2 < 1, \label{eq:CCond2_2}\\
\alpha_3 &> \dfrac{|\beta_3|}{2 EI} \left( \dfrac{(EI)^2 + 1 - d_3}{1-d_3} \right), & 0 \le d_3 < 1. \label{eq:CCond2_3}
\end{align}

Now, consider the energy associated with the system \eqref{Rao_1_12}:
\begin{align}\label{eq:En}
E(t) = \dfrac{1}{2} &\left(\rho_1h_1\|u_t\|^2+E_1h_1\|u_x\|^2+\rho_3h_3\|v_t\|^2+E_3h_3\left\|v_x\right\|^2+\rho h \left\|w_t\right\|^2 +EI\left\|w_{xx}\right\|^2\right.\\
&\left.+k\left\|-u+v+\alpha w_{x}\right\|^2 \right)
+ \dfrac{\lvert \beta_1\rvert }{2}\tau_1(t) \int_0^1 u_{t}^2(L, t-\tau(t)\rho )\,d\rho \nonumber \\
& + \dfrac{\lvert \beta_2\rvert }{2}\tau_2(t) \int_0^1 v_{t}^2(L, t-\tau(t)\rho )\,d\rho + \dfrac{\lvert \beta_3\rvert }{2}\tau_3(t) \int_0^1 w_{tx}^2(L, t-\tau(t)\rho )\,d\rho. \nonumber
\end{align}
Thereafter, the natural questions appear as follows:

\vspace{0.2cm}
\noindent \textbf{Problem $P_1$:} Does the system \eqref{Rao_1_12} admit a solution?

\vspace{0.2cm}
\noindent \textbf{Problem $P_2$:} If a solution exists for \eqref{Rao_1_12}, does the time-varying delay feedback ensure exponential decay of the associated energy \eqref{eq:En}?

\vspace{0.2cm}
The second problem that we addressed in this work is a boundary control system associated with the same Rao--Nakra dynamics (PDEs). While it formally corresponds to removing the time-dependent damping terms ($a_i(\cdot) =0,$  $i=1,2,3.$) in \eqref{Rao_1_12}, it also involves a substantial modification of the boundary conditions. In particular, the boundary dynamics are reinterpreted so that they play the role of a feedback control mechanism:
\begin{equation}\label{2bbm}
\left\{
\begin{array}{ll}
\rho_1 h_1 u_{tt} - E_1 h_1 u_{xx} - k(-u + v + \alpha w_x) = 0, & x\in(0,L),\, t>0,\\[0.2em]
\rho_3 h_3 v_{tt} - E_3 h_3 v_{xx} + k(-u + v + \alpha w_x) = 0, & x\in(0,L),\, t>0,\\[0.2em]
\rho h w_{tt} + EI w_{xxxx} - \alpha k (-u + v + \alpha w_x)_x = 0, & x\in(0,L),\, t>0,\\[0.2em]
u(0,t)=v(0,t)=w_x(0,t)=w_{xxx}(0,t)=0, & t>0,\\
w_{xx}(L,t)=w_{xxx}(L,t)=0, & t>0,\\
u_{tt}(L,t) + u_x(L,t) = f_1(t), & t>0,\\
v_{tt}(L,t) + v_x(L,t) = f_2(t), & t>0,\\
w_{tt}(L,t) - u(L,t) + v(L,t) + \alpha w_x(L,t) = f_3(t), & t>0,\\
(u,v,w)(x,0) = (u_0, v_0, w_0)(x), & x \in (0,L),\\
(u_t,v_t,w_t)(x,0) = (u_1, v_1, w_1)(x), & x \in (0,L),
\end{array}
\right.
\end{equation}
where $f_i(t)$, $i=1,2,3$, are the boundary controls. Within this boundary control framework, we are naturally led to consider the following controllability problem:

\vspace{0.2cm}
\noindent \textbf{Problem $P_3$:} For the controlled system \eqref{2bbm}, given $T>0$ and initial states $(u_0,u_1,v_0,v_1,w_0,w_1)$ in a suitable function space, can one determine control inputs $f_1,f_2,f_3$ such that the solution $(u,v,w)$ satisfies
\begin{align*}
(u,v,w)(x,0) &= (u_0,v_0,w_0)(x), & x \in (0,L),\\
(u_t,v_t,w_t)(x,0) &= (u_1,v_1,w_1)(x), & x \in (0,L),
\end{align*}
and
\begin{align*}
(u,v,w)(x,T) &= (0,0,0), & x \in (0,L),\\
(u_t,v_t,w_t)(x,T) &= (0,0,0), & x \in (0,L)?
\end{align*}

If, for arbitrary $T>0$, such controls can always drive the system to the equilibrium state, then the system is said to be \emph{null controllable} or \emph{controllable to zero}.

\subsection{Background and state of the art}
The Rao-Nakra sandwich beam has been actively studied both numerically and analytically. Particularly, the stabilization and controllability problem has been the subject of mathematical endeavor. This has led to so many research papers, and hence it is quasi-impossible to review all of them. Notwithstanding, we shall restrict ourselves to the literature related to the analysis and control of solutions of the Rao-Nakra beam problem.
The first outcome goes back twenty years ago \cite{raja}, where the Riesz basis property is established for a two-layer beam. This result permits us to deduce a sufficient condition for the exponential stability as well as exact controllability of the system.  Later, a three-layer  Rao-Nakra beam is considered in \cite{raja2}, and if the control time $T$ is large enough, then the system is exactly controllable under certain other parametric restrictions.

A more general model is treated in \cite{eect} in the sense that several possible multilayers are used in the form of multilayer plates. Specifically, the model consists of $2m+1$ alternating stiff and compliant (core) layers, with stiff layers on the outside. It is assumed that the stiff layers fulfill the Kirchhoff hypothesis, while the compliant layers admit shear. The Riesz basis property is shown for the eigenfunctions of the decoupled system, and then the exponential stability is concluded for the coupled system. A similar generalized model with interior damping is considered in \cite{siam}  with all combinations of clamped and hinged boundary conditions. In turn, the control is present in either the moment or the rotation angle at one end of the beam. Then, if the damping parameters are sufficiently small, the exact controllability is proved provided that the control time $T$ is large enough. Moreover, the exact controllability holds for the undamped cases without any restriction on the parameters in the system.

When the Rao-Nakra beam is subject to magnetic effects for the piezoelectric layers,  the lack of uniform strong stabilization is demonstrated in \cite{ieee} in the case where the mechanical boundary forces for the longitudinal dynamics are removed. However, in the absence of magnetic effects, four types of feedback controllers are required in order to reach the exponential stability of the system. Such results are improved in \cite{new} for fully-dynamic and electrostatic Rao-Nakra
type models. The polynomial stability is obtained in \cite{wang} for a simplified Rao-Nakra beam system. The main result in \cite{liu} is the polynomial stability of the Rao-Nakra beam system under the action of only one viscous damping in either the beam equation or one of the wave equations. The exponential stability is obtained in \cite{pde} for the Rao-Nakra beam system via a thermoelastic type control with second sound.  The same result holds if three interior viscous damping controls act despite the presence of one time-delayed term of the same type \cite{rapo}. It turns out that this outcome remains valid if the three damping controls and the delayed term are of type Kelvin--Voigt \cite{caba}. In \cite{mon}, the long-time dynamics of a non-autonomous Rao-Nakra beam with nonlinear damping and source terms are studied. Lipschitz stability result is proved, as well as the existence of pullback attractors.

Rao-Nakra sandwich beam with fractional derivatives has also attracted some authors. In fact, three boundary dissipation of fractional derivative type are considered in \cite{villa} and polynomial stability of the system is proved. Recently, fractional interior damping terms have been incorporated in the Rao-Nakra beam system \cite{kais}. Polynomial decay of the system's energy is achieved, and then the decay rate is shown to depend on the fractional derivative parameters.
Generalized Rao-Nakra beam system is also dealt with in \cite{akil}, where the system consists of four wave equations for the longitudinal displacements and the shear angle of the top and bottom layers, and one Euler–Bernoulli beam equation for the transversal displacement. Two viscous damping controls are designed, and several stability and non-stability findings are presented depending on whether both of the top and bottom layers are directly damped or otherwise. In \cite{muki}, general stability findings are presented for the Rao-Nakra beam with memory and thermal dissipation governed by Fourier’s law of heat conduction. In \cite{spanish}, two Kelvin-Voigt-type damping controls and one memory control are proposed to show the exponential stability for the Rao-Nakra beam. In turn, it is proved in \cite{muki2} that the effect of Gurtin-Pipkin’s thermal law on the outer layers of the Rao-Nakra beam system is strong enough to ensure the exponential stability of the system. In \cite{bao}, nonlinear viscous damping controls and nonlinear source terms are introduced in the beam, and the uniform energy decay rates are proved under
certain assumptions of the parameters. Moreover, the authors prove the existence of the global attractor and generalized exponential attractors of the system.
Note that exponential as well as general energy decay rates of the energy of the Rao-Nakra beam are obtained in \cite{bao3} in the case of time-varying weights and nonlinear viscous damping controls.

When the Kelvin-Voigt damping terms act on the first and third equations of the beam, the lack of exponential stability of the system is proven, and then the polynomial stability with rate $t^{-1/4}$ is established \cite{bao4}. Note that the exponential stability is established in \cite{bao2} for the Rao-Nakra beam equation with time-varying weights and time-varying delay in the damping. This result is proved by using two approaches: the multiplier method and the observability argument. Another work deals with the generalized Rao-Nakra beam equation, and the analytic stability is obtained when all the displacements are globally damped via Kelvin-Voigt damping \cite{zai}. On the other hand, if the damping acts only on the shear angle displacements of the top and bottom layers, then the energy of the system decays polynomially with a decay rate $t^{-1/2}$.

One can also find a study on the Rao-Nakra beam system in the whole line. Indeed,  linear frictional damping controls are supposed to act on the two wave equations of the system \cite{ais1}. In this event, the author proves a polynomial decay result for the $L^2(\mathbb{R}$ norm of solutions and their higher order derivatives with respect to the space variable. It is also demonstrated that the decay rate is specified in terms of the regularity of the initial data. The asymptotic behavior of solutions in a thermoplastic Rao-Nakra beam is explored when nonlinear damping with a variable exponent occurs \cite{kho}. Exponential, polynomial, and strong stability for Rao-Nakra sandwich beams are obtained with the action of a single internal infinite memory \cite{ais2}.  Furthermore, the stability type depends on the presence of the memory term in the wave or the Bernoulli equation, and also the order of the derivative in the memory term. Unlike the works mentioned above, there are articles where it is shown that the action of localized frictional damping controls in the Rao-Nakra beam system is strong enough to force the corresponding energy to decay (see \cite{alm} for linear damping and \cite{bao5} for nonlinear damping).

Beyond the specific literature on Rao-Nakra beams, various analytical techniques have been developed for boundary and internal control of beam-type and hyperbolic systems that are relevant to the present work. For instance, the work in \cite{Akil2025} shows that passive shear damping alone is insufficient to guarantee exponential stability, and that a suitable combination of active boundary controls and distributed damping mechanisms is necessary. This observation aligns with the growing interest in boundary and hybrid control strategies for complex structures. On the other hand, nonlinear dynamic effects such as superharmonic resonance have been investigated in \cite{AlOsta2024}, emphasizing the role of refined mechanical modeling in capturing realistic behaviors of sandwich beams. From a control-theoretic perspective, recent results on systems with boundary damping and nonlocal effects \cite{Cavalcanti2025,AlMahdi2022} provide general decay frameworks that extend beyond classical exponential and polynomial stability. Moreover, models incorporating additional physical effects, such as magnetic interactions in piezoelectric beams \cite{AlMahdi2025}, demonstrate that the interplay between control design and multiphysics coupling can significantly influence the asymptotic behavior of solutions. These developments motivate the analysis of Rao--Nakra type systems under more general boundary feedback laws and time-dependent mechanisms, as considered in the present work.

Lastly, the competition between a nonlinear stabilization mechanism and a nonlinear source term for the Rao-Nakra beam is investigated in \cite{ejde}. Blow-up of solutions to the problem is established with linear and nonlinear weak damping at high initial energy. An estimate is also provided for the lower and upper bounds of the lifespan of the blow-up solution and the blow-up rate \cite{ejde}.

\subsection{Main results and paper's outline}
The first main result of this article shows that the energy $E(t)$ (see \eqref{eq:En}) decays exponentially despite the presence of a time-dependent delay. Moreover, an explicit estimate for the decay rate is provided. Of course, we begin by establishing the existence of solutions for a Cauchy problem associated with system \eqref{Rao_1_12}. This outcome is guaranteed by invoking an abstract theorem due to \cite{Kato1970}. With the existence of a solution confirmed, thus providing an answer to Problem $P_1$, we can then investigate whether the energy associated with the system decays exponentially. By applying Lyapunov’s method, it follows that the energy $E(u(t))$ decays exponentially to zero as $t \to \infty$, thereby also providing a solution to Problem $P_2$.
\vspace{0.2cm}

Before stating our first main result, we shall introduce some notations.
In the sequel, we will consider the Lebesgue space $ L^2(0, L) $, equipped with its usual inner product $ \left\langle \cdot, \cdot \right\rangle $ and norm $ \| \cdot \| $.
Next,  we introduce the following Hilbert spaces
$$
H_*^1(0, L):=\left\{u \in H^1(0, L) ; u(0)=0\right\},
$$
$$
V_*^2(0, L):=\left\{w \in H^1_0(0, L) \cap H^2(0, L) ; w_x(0)=0\right\}.
$$
Then, we define the  space $\mathcal{H}$ as
$$
\mathcal{H}=\left[H_*^1(0, L) \times L^2(0, L)\right]^2 \times V_{*}^2(0,L)\times L^2(0,L).
$$
For  $U=(u,\Psi_1, v,\Psi_2,w,\Psi_3)^{\top}, \, \, U^{\sharp}=(u^{\sharp},\Psi_1^{\sharp}, v^{\sharp},\Psi_2^{\sharp}, w^{\sharp}, \Psi_3^{\sharp})^{\top} \in \mathcal{H}$, the space $\mathcal{H}$ is  endowed with the inner product
$$
\begin{aligned}
\left\langle U, U^{\sharp}\right\rangle_\mathcal{H}= &  \rho_1h_1\left\langle \Psi_1, \Psi_1^{\sharp}\right\rangle+E_1 h_1\left\langle u_x, u^{\sharp}_x\right\rangle+\rho_3 h_3\left\langle \Psi_2, \Psi_2^{\sharp}\right\rangle+E_3 h_3\left\langle v_x, v_x^{\sharp}\right\rangle \\
& +\rho h\left\langle \Psi_3, \Psi_3^{\sharp}\right\rangle +EI \left\langle w_{xx}, w_{xx}^{\sharp}\right\rangle +k\left\langle-u+v+\alpha w_{x},-u^{\sharp}+v^{\sharp}+\alpha w_{x}^{\sharp}   \right\rangle
\end{aligned}
$$
and induced norm
\begin{align*}
\|U\|^2_\mathcal{H}=&\rho_1h_1\|\Psi_1\|^2+E_1h_1\|u_x\|^2+\rho_3h_3\|\Psi_2\|^2+E_3h_3\left\|v_x\right\|^2+\rho h \left\|\Psi_3\right\|^2 +EI\left\|w_{xx}\right\|^2\\
&+k\left\|-u+v+\alpha w_{x}\right\|^2.
\end{align*}Let the space state $$\mathcal{H}_1 = \mathcal{H} \times [L^2(0,1)]^3$$ equipped with the inner product
\begin{equation*}
\begin{aligned}
\left\langle \left(U,z_1, z_2, z_3\right), \left( U^{\sharp}, z_1^{\sharp}, z_2^{\sharp}, z_3^{\sharp}\right)
\right\rangle_{t}
=\ &
\left\langle U, U^{\sharp}
\right\rangle_{\mathcal{H}}
+ \sum_{i=1}^3\lvert \beta_i\rvert  \tau_i(t)
\left\langle
	z_i,  z_i^{\sharp}
\right\rangle_{L^2(0,1)} ,
\end{aligned}
\end{equation*}
for any $\left(U,z_1, z_2, z_3\right), \left( U^{\sharp}, z_1^{\sharp}, z_2^{\sharp}, z_3^{\sharp}\right)\in \mathcal{H}_1$.

Now, the first main result can be stated as follows:
\begin{theorem}\label{th:Lyapunov0} Suppose that the time-dependent delay functions and the time-varying weights satisfy \eqref{eq:TauCond} and \eqref{damping_123}, respectively, and the feedback gains $\alpha_i, \beta_i$ fulfill \eqref{eq:CCond2_1}-\eqref{eq:CCond2_3}, for $i=1, 2, 3$. Then, there exist three positive constants $\zeta$,  $\lambda$ and $\mu_*$ such that the energy $E(t)$ given by \eqref{eq:En}, along the solution of the system  \eqref{Rao_1_12} in $\mathcal{H}_1$ satisfies
$$
E(t) \leq \zeta e^{-\frac{\lambda}{\mu_*}t }E(0), \quad \hbox{ for all } t \geq 0.$$
\end{theorem}

On the other hand, the second main result of this paper concerns the boundary controllability of system \eqref{2bbm}. More precisely, it shows that the system can be driven from any given initial state to the equilibrium state. This provides a positive answer to Problem $P_3.$

To define the state space of \eqref{2bbm}, we introduce the following Hilbert spaces
$$
H_*^2(0, L):=\left\{w \in H^2(0, L) ; w_x(0)=0\right\}\cap \left\{w \in L^2(0, L) ; \int_{0}^L w\, dx=0\right\}
$$
and
$$
H^4_{*}(0,L):=\left\{ w\in H^4(0,L);w_{xx}(L)=w_{xxx}(L)=w_{xxx}(0)=0\right\}.
$$
Then, the phase space is defined as
$$
\mathcal{H}_2=\left[H_*^1(0, L) \times L^2(0, L)\right]^2 \times H_{*}^2(0,L)\times L^2(0,L)\times \mathbb{R}^3.
$$
For
\begin{equation}\label{U-r}
U=(u,\Psi_1, v,\Psi_2,w,\Psi_3, \Psi_4, \Psi_5, \Psi_6)^{\top}\in \mathcal{H}_2
\end{equation}
and
\begin{equation}\label{U-r-a}
U^{\sharp}=(u^{\sharp},\Psi_1^{\sharp}, v^{\sharp},\Psi_2^{\sharp}, w^{\sharp}, \Psi_3^{\sharp}, \Psi_4^{\sharp}, \Psi_5^{\sharp}, \Psi_6^{\sharp})^{\top} \in \mathcal{H}_2,
\end{equation}
the phase space is  endowed with the inner product
$$
\begin{aligned}
\left\langle U, U^{\sharp}\right\rangle_{\mathcal{H}_2}= &  \rho_1h_1\left\langle \Psi_1, \Psi_1^{\sharp}\right\rangle+E_1 h_1\left\langle u_x, u^{\sharp}_x\right\rangle+\rho_3 h_3\left\langle \Psi_2, \Psi_2^{\sharp}\right\rangle+E_3 h_3\left\langle v_x, v_x^{\sharp}\right\rangle \\
& +\rho h\left\langle \Psi_3, \Psi_3^{\sharp}\right\rangle +EI \left\langle w_{xx}, w_{xx}^{\sharp}\right\rangle +k\left\langle-u+v+\alpha w_{x},-u^{\sharp}+v^{\sharp}+\alpha w_{x}^{\sharp}   \right\rangle\\
&+E_1 h_1\left(\Psi_4,\Psi_4^{\sharp}   \right)_{\mathbb{R}}+E_3h_3\left(\Psi_5,\Psi_5^{\sharp}   \right)_{\mathbb{R}}+\alpha k\left(\Psi_6,\Psi_6^{\sharp}   \right)_{\mathbb{R}}
\end{aligned}
$$
and induced norm
\begin{align*}
\|U\|^2_{\mathcal{H}_2}=&\rho_1h_1\|\Psi_1\|^2_{L^2}+E_1h_1\|u_x\|^2_{L^2}+\rho_3h_3\|\Psi_2\|^2_{L^2}+E_3h_3\left\|v_x\right\|^2_{L^2}+\rho h \left\|\Psi_3\right\|^2_{L^2} +EI\left\|w_{xx}\right\|^2_{L^2}\\
&+k\left\|-u+v+\alpha w_{x}\right\|^2_{L^2}+E_1h_1 \left| \Psi_4 \right|^2_{\mathbb{R}}+E_3h_3\left| \Psi_5 \right|^2_{\mathbb{R}}+\alpha k\left| \Psi_6 \right|^2_{\mathbb{R}}.
\end{align*}

Our second main result is stated below:

\begin{theorem}\label{controlteorfinalnodemos}Let a time $ T>0 $ and  given  $$\left(  u_{0}%
,u_{1},v_{0},v_{1},w_{0},w_{1}, u_{1}(L), v_{1}(L), w_{1}(L)\right)  ^{\top}\in D\left(\mathcal{P}\right), $$
one can always find a control inputs  $ f_i \in  L^2(0,T), \ \ i=1,2,3  $,  such that \eqref{2bbm} admits a unique solution $$ U=(u,u_t, v,v_t,w, w_t, u_t(L), v_t(L), w_t(L))^{\top} \in
C ([0,T]; D\left(\mathcal{P}\right))$$
satisfying
$$ (u(T), u_t(T),  v(T), v_t(T),  w(T),  w_t(T), u_t(L, T), v_t(L, T), w_t(L, T))=(0, 0, 0, 0, 0, 0, 0, 0, 0).$$
Here, $\mathcal{P}$ denotes the operator associated with system \eqref{2bbm}, whose precise definition is given later in \eqref{opae2} and \eqref{DAA-1}.
\end{theorem}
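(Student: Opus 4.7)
The plan is to prove Theorem \ref{controlteorfinalnodemos} by the Hilbert Uniqueness Method (HUM). As a preliminary step, I would rewrite \eqref{2bbm} as an abstract Cauchy problem $U_t=\mathcal{P}U+Bf$ on the phase space $\mathcal{H}_2$, where the last three scalar components $\Psi_4,\Psi_5,\Psi_6$ of $U$ absorb the dynamical boundary traces $u_t(L),v_t(L),w_t(L)$, and $B$ is a bounded control operator acting only on these components. Well-posedness of the uncontrolled problem follows from the Lumer--Phillips theorem: the weights $E_1h_1$, $E_3h_3$, $\alpha k$ appearing in front of $|\Psi_4|^2$, $|\Psi_5|^2$, $|\Psi_6|^2$ in $\langle\cdot,\cdot\rangle_{\mathcal{H}_2}$ are chosen precisely so that the boundary contributions produced by integration by parts cancel against the dynamical boundary equations, yielding skew-adjointness of $\mathcal{P}$; a standard Lax--Milgram argument then provides surjectivity of $\lambda I-\mathcal{P}$. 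The existence of a solution $U\in C([0,T];D(\mathcal{P}))$ for $f_i\in L^2(0,T)$ is obtained via the classical lifting/transposition argument.

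The heart of the proof is the observability inequality for the adjoint system, which is again a Rao--Nakra system with dualised dynamical boundary conditions at $x=L$. I would seek an estimate of the form
\begin{equation*}
\|\Phi(0)\|_{\mathcal{H}_2}^2 \leq C(T)\int_0^T \bigl(|\phi_t(L,t)|^2+|\psi_t(L,t)|^2+|\chi_t(L,t)|^2\bigr)\,dt,
\end{equation*}
for every solution $\Phi=(\phi,\phi_t,\psi,\psi_t,\chi,\chi_t,\phi_t(L),\psi_t(L),\chi_t(L))^\top$ of the adjoint. The strategy is the multiplier method: applying the multipliers $x\phi_x$, $x\psi_x$, $x\chi_x$ in the three equations, supplemented by cross multipliers that exploit the coupling term $-\phi+\psi+\alpha\chi_x$, I would derive an identity in which the boundary traces at $x=L$ appear with a favourable sign while the interior integrals reconstruct the conserved adjoint energy. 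As indicated in the abstract, this is expected to first yield an inequality with an additional lower order term, which is then removed by a compactness--uniqueness argument relying on a Holmgren-type unique continuation statement for the Rao--Nakra system. Once the observability is established, the HUM construction is routine: solve the adjoint backward from suitable final data, take the traces of the velocities at $x=L$ as controls, and use the observability inequality to invert, via Lax--Milgram, the coercive quadratic form that cancels the free evolution of the initial data at time $T$.

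The main obstacle is clearly the observability inequality itself. Three difficulties stand out. First, the triple coupling $(-u+v+\alpha w_x)$ enters all three PDEs, so the classical multipliers generate mixed boundary terms that have no a priori fixed sign and must be recombined very carefully. Second, the dynamical boundary conditions mean that $\phi_t(L),\psi_t(L),\chi_t(L)$ are themselves state variables in $\mathcal{H}_2$, so the multiplier identities have to be written consistently with their own evolution equations, and the boundary energy contributions in $\|\Phi(0)\|_{\mathcal{H}_2}^2$ must be controlled on the same footing as the bulk ones. Third, the simultaneous presence of a fourth-order Bernoulli-type component ($\chi$) and two second-order wave components ($\phi,\psi$) with different propagation speeds makes it unlikely that a single multiplier controls all of $\|\Phi(0)\|_{\mathcal{H}_2}^2$ at once, so the intermediate observability inequalities announced in the abstract, combined with unique continuation, appear unavoidable. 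The fact that the statement allows arbitrary $T>0$ is consistent with the observation being taken in the dynamical boundary variables, which dualise the three scalar controls, but confirming this without a minimal-time restriction will require a careful accounting of every boundary contribution.
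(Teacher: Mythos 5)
Your proposal is correct in outline and coincides with the paper's strategy through the observability analysis: skew-adjointness of $\mathcal{P}$ on $\mathcal{H}_2$ with the weights $E_1h_1$, $E_3h_3$, $\alpha k$ on the boundary components (Proposition \ref{teogeneinfiA}), the multiplier identity with $xu_x$, $xv_x$, $xw_x$ yielding an estimate polluted by an $H^s$ lower-order term (inequality \eqref{primerdesiobserder}), and its removal by compactness--uniqueness plus Holmgren (Proposition \ref{teoimportantepaperI}) are exactly what you describe. The one genuine divergence is the duality bookkeeping at the end. You place the controls in the last three, finite-dimensional components of the state, so that $B$ is bounded and $B^*$ applied to the adjoint trajectory returns the \emph{velocity} traces $\chi_t(L,t),\eta_t(L,t),\Theta_t(L,t)$; with that pairing the velocity-trace observability inequality \eqref{seconddesiobserder2} closes the HUM argument directly. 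The paper instead derives the duality by integrating the PDE against the adjoint (Lemma \ref{charac}), which pairs $f_i$ with the \emph{position} traces $\chi(L,t),\eta(L,t),\Theta(L,t)$ (see \eqref{equiv} and Remark \ref{obs_des_obser}); consequently it must upgrade the velocity-trace inequality to the position-trace inequality \eqref{thirdesiobserder3}, which it does in Theorem \ref{teoprindesiusados} by the substitution $\tilde U_0=\mathcal{P}^{-1}U_0$ (legitimate since $0\in\rho(\mathcal{P})$ by Lemma \ref{0inresolv}), exploiting that $\tilde U_t=U$, so the velocity traces of $\tilde U$ are the position traces of $U$. Your route saves this step; the paper's buys an observability inequality in the weaker position traces, which is what its transposition definition of solution actually demands. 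Either way, make sure the trace you observe is the one your duality identity actually produces: if you obtain the identity by integration by parts on \eqref{2bbm}, as in Lemma \ref{charac}, you will land on the position traces and will need the $\mathcal{P}^{-1}$ conversion after all.
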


To establish Theorem \ref{controlteorfinalnodemos}, we employ the observation–control duality framework developed by Dolecki and Russell \cite{Do-Ru}, within the setting introduced by Lions \cite{Lions1988} (see also \cite{Lions1988SIAM}). Specifically, we rely on the Hilbert Uniqueness Method (HUM), which provides a fundamental equivalence between controllability and observability.

A central contribution of our work is the study of a Rao--Nakra sandwich beam model involving three coupled equations. This significantly increases the complexity of the analysis, as the model consists of three beams subject to dynamic boundary conditions.  These boundary conditions present substantial challenges in proving both the exponential decay of the energy and the observability inequality. As mentioned earlier, these difficulties are overcome by developing an appropriate Lyapunov approach and by deriving two intermediate observability inequalities, which play a crucial role in handling the full system.

\vspace{0.2cm}
We conclude the introduction with an outline of the paper. In Section \ref{sec2}, we establish the well-posedness of system \eqref{Rao_1_12} using semigroup theory,
specifically applying a theorem due to Kato \cite{Kato1970}, thereby providing a solution to the first problem posed in the introduction. Section \ref{sec3} is devoted to proving our first main result, showing that the energy associated with system \eqref{Rao_1_12} decays exponentially, thus addressing the second main problem introduced in this work. Sections \ref{sec4} and \ref{sec5} focus on the properties of the control system \eqref{2bbm}. In particular, we first establish the well-posedness of this system via semigroup theory, and then derive an observability inequality for the adjoint system associated with \eqref{2bbm}, which, together with the Hilbert Uniqueness Method, ensures Theorem \ref{controlteorfinalnodemos}. Finally, the paper ends with a conclusion.

\section{Well-posedness result for the delayed system}\label{sec2}

Let us start considering the following linear abstract Cauchy problem
\begin{equation}\label{eq:Cauchy}
\begin{cases}
\dfrac{d}{dt} U(t) = A(t)U(t),& t>0,\\
 U(0) = U_0, & t>0,
\end{cases}
\end{equation}
where $A(t)\colon D(A(t))\subset {H}\to {H}$ is densely defined. If $D(A(t))$ is independent of time $t$, i.e., $D(A(t)) = D(A(0)),$ for $t > 0.$ Then, the next theorem ensures the existence and uniqueness of the Cauchy problem \eqref{eq:Cauchy}.
\begin{theorem}[\cite{Kato1970}]\label{th:KatoCauchy}
Assume that:
\begin{enumerate}
\item $\mathcal{Z} = D(A(0))$ is a dense subset of $H$ and $ D(A(t)) = D(A(0))$, for all $t > 0$.
\item 
$A(t)$ generates a strongly continuous semigroup on $H$. Moreover, the family $\{A(t) \colon t\in [0, T ]\}$ is stable with stability constants $C, \ m$ independent of $t$.
\item $\partial_t A(t)$ belongs to $L_{\ast}^\infty([0, T ], B(\mathcal{Z}, H))$, the space of equivalent classes of essentially bounded, strongly measure functions from $[0, T ]$ into the set $B(\mathcal{Z}, H)$ of bounded operators from $\mathcal{Z}$ into $H$.
\end{enumerate}
Then, problem \eqref{eq:Cauchy} has a unique solution $U \in C([0, T ], \mathcal{Z}) \cap C^1 ([0, T ], H)$ for any initial datum in $\mathcal{Z}$.
\end{theorem}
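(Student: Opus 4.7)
The plan is to construct an evolution system $\{U(t,s)\}_{0 \le s \le t \le T}$ for the family $\{A(t)\}$ by a piecewise-constant approximation procedure, and then identify $t \mapsto U(t,0)U_0$ as the unique solution of \eqref{eq:Cauchy}. First, I would partition $[0,T]$ into $n$ equal subintervals $I_k^{(n)} = [kT/n,(k+1)T/n)$ and freeze $A(t)$ at the left endpoint on each piece, obtaining a step family $A_n(t) := A(kT/n)$ on $I_k^{(n)}$. Each $A_n$ is piecewise autonomous, so by hypothesis~(2) it generates a piecewise-defined evolution $U_n(t,s)$ obtained by concatenating the semigroups $e^{\sigma A(kT/n)}$. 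The uniform stability constants $C$, $m$ produce the bound $\|U_n(t,s)\|_{B(H)} \le C e^{m(t-s)}$ independently of $n$, which is the structural estimate that all later limits lean on.

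Second, I would prove that $\{U_n(t,s)U_0\}$ is Cauchy in $H$ for every $U_0 \in \mathcal{Z}$, by means of the Duhamel-type identity
\begin{equation*}
(U_n(t,s) - U_{n'}(t,s))U_0 = \int_s^t U_n(t,r)\bigl(A_n(r) - A_{n'}(r)\bigr) U_{n'}(r,s) U_0\, dr.
\end{equation*}
Hypothesis~(1) (common domain) keeps $U_{n'}(r,s)U_0 \in \mathcal{Z}$, while hypothesis~(3) (strong measurability and essential boundedness of $t \mapsto A(t)$ from $\mathcal{Z}$ to $H$) forces the integrand to zero in $H$ as $n,n' \to \infty$. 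Dominated convergence yields a strongly convergent limit $U(t,s)$, which is extended to all of $H$ by density and the uniform bound.

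Third, to verify the regularity claim I would set $u(t) := U(t,0)U_0$ and check $u \in C([0,T],\mathcal{Z}) \cap C^1([0,T], H)$. The key step is to pass to the limit in
\begin{equation*}
\frac{d}{dt} U_n(t,0)U_0 = A_n(t)\, U_n(t,0)U_0,
\end{equation*}
which is valid off the partition points. This requires uniform-in-$n$ control of $A_n(t) U_n(t,0)U_0$ in $H$, obtained by differentiating the approximate evolutions a second time and invoking $\partial_t A(t) \in L_{\ast}^\infty([0,T], B(\mathcal{Z}, H))$. Uniqueness then follows from a standard Gronwall estimate applied to the difference of two solutions, closed using the stability bound on $U(t,s)$.

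The hardest part will be Step~3: transferring the $\mathcal{Z}$-regularity through the limit. The hypothesis $\partial_t A(t) \in L_{\ast}^\infty([0,T], B(\mathcal{Z}, H))$ is precisely what prevents the graph norm of $A(t)$ from degenerating in time and supplies the uniform estimate on $\|A_n(t) U_n(t,0)U_0\|_H$ needed to identify the limit as a classical solution. Without it one would recover only a mild solution in $C([0,T], H)$, falling short of the $C^1$-regularity required by the statement.
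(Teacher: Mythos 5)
The paper does not prove this theorem: it is quoted verbatim from Kato's 1970 paper (via the formulation used in Nicaise--Valein--Fridman) and is used as a black box, so there is no in-paper argument to compare against. Your sketch reconstructs what is essentially Kato's own proof --- the product-integral construction with piecewise-frozen generators, the uniform bound $\|U_n(t,s)\|_{B(H)} \le C e^{m(t-s)}$ from stability, the Duhamel identity for the difference of two approximants, and the passage to the limit in the approximate equation --- so as a blueprint it is the right route.

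One point deserves a concrete warning, because as written your Step~2 does not close. The Duhamel integrand is $U_n(t,r)\bigl(A_n(r)-A_{n'}(r)\bigr)U_{n'}(r,s)U_0$, and to send it to zero you need not only $U_{n'}(r,s)U_0 \in \mathcal{Z}$ (which the common domain gives) but a bound on $\sup_{n'}\|U_{n'}(r,s)U_0\|_{\mathcal{Z}}$ that is uniform in $n'$ and $r$; otherwise the strong convergence $A_n(r)-A_{n'}(r)\to 0$ in $B(\mathcal{Z},H)$ buys you nothing and dominated convergence has no dominating function. You defer this uniform graph-norm control to Step~3, but it is a prerequisite for Step~2. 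In Kato's argument this is exactly the issue of \emph{stability of the family in $\mathcal{Z}$} (admissibility of $\mathcal{Z}$): one shows, using the common domain from~(1), the stability in $H$ from~(2), and the Lipschitz continuity of $t\mapsto A(t)$ in $B(\mathcal{Z},H)$ implied by~(3), that the operators $A(t)$ restricted to $\mathcal{Z}$ (with the graph norm) again form a stable family, whence $\|U_n(t,s)\|_{B(\mathcal{Z})} \le \tilde{C} e^{\tilde{m}(t-s)}$ uniformly in $n$. This perturbation lemma is the genuinely hard step and should be established before, not after, the Cauchy argument; ``differentiating the approximate evolutions a second time'' is not by itself a substitute for it, since the second derivative of $U_n(t,0)U_0$ need not exist for $U_0\in\mathcal{Z}$ only. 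With the $\mathcal{Z}$-stability in hand, the rest of your outline (convergence, identification of the limit, $C^1$-regularity in $H$ and continuity in $\mathcal{Z}$, uniqueness by Gronwall) goes through as you describe.
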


The task ahead is to apply the above result to ensure the existence of solutions for the system \eqref{Rao_1_12}. Arguing in a classical way, see for instance, \cite{xyl} and \cite{Nicaise2006, Nicaise2009}, let us define the auxiliary variables
\begin{equation}\label{z_iforchange}
\begin{cases}
z_1(\rho, t) = u_{t}(L, t- \tau_1(t)\rho), \\
z_2(\rho, t) = v_{t}(L, t- \tau_2(t)\rho), \\
z_3(\rho, t) = w_{tx}(L, t- \tau_3(t)\rho), \\
\end{cases}
\end{equation}which satisfy the following transport equations,
\begin{equation}\label{eq:tr_z1}
\begin{cases}
\tau_1(t)z_{1,t}(\rho, t) + (1-\dot\tau_1(t)\rho)z_{1, \rho}(\rho, t) = 0, & \rho \in(0,1), t>0, \\
z_1(0, t) = u_{t}(L, t), \ z_1(\rho, 0) = f_0(-\tau_1(0)\rho), &  \  \rho \in (0,1), t>0.
\end{cases}
\end{equation}
\begin{equation}\label{eq:tr_z2}
\begin{cases}
\tau_2(t)z_{2,t}(\rho, t) + (1-\dot\tau_2(t)\rho)z_{2, \rho}(\rho, t) = 0, & \rho \in(0,1), \, t>0, \\
z_2(0, t) = v_{t}(L, t), \ z_2(\rho, 0) = g_0(-\tau_2(0)\rho), &  \  \rho \in (0,1), \,  t>0,
\end{cases}
\end{equation}
\begin{equation}\label{eq:tr_z3}
\begin{cases}
\tau_3(t)z_{3,t}(\rho, t) + (1-\dot\tau_3(t)\rho)z_{3, \rho}(\rho, t) = 0, & \rho \in(0,1),  \, t>0, \\
z_3(0, t) = w_{tx}(L, t), \ z_3(\rho, 0) = h_0(-\tau_3(0)\rho) & \rho \in (0,1), \, t>0,
\end{cases}
\end{equation}
respectively.

Now, we introduce the new variables
$$
\Psi_1=u_{t},\ \ \ \Psi_2=v_{t},\quad \text{and} \quad \Psi_3=w_{t},$$
and define the vector functions
$$
V=(u,\Psi_1, v,\Psi_2,w,\Psi_3, z_1, z_2, z_3)^{\top}.$$Therefore, the systems \eqref{Rao_1_12}, \eqref{eq:tr_z1}, \eqref{eq:tr_z2} and \eqref{eq:tr_z3} can be reformulated as an abstract Cauchy problem
\begin{equation}\label{abs_11_2}
\left\{
\begin{array}
[c]{l}
V_t = \mathcal{A}(t) V\\
V(0) = V_0=\left(  u_{0} ,u_{1},v_{0},v_{1},w_{0},w_{1}, f_0(-\tau_1(0)\cdot), g_0(-\tau_2(0)\cdot), h_0(-\tau_3(0)\cdot) \right)^{\top},
\end{array}\right.
\end{equation} where  the time-dependent operator
$$\mathcal{A}(t)\colon D(\mathcal{A}(t))\subset \mathcal{H}_1\to \mathcal{H}_1$$ is given by
\begin{equation}\label{opae2_33}
\mathcal{A}(t)V:=\begin{pmatrix}
\Psi_1 \\
\dfrac{1}{\rho_1 h_1} \left[ E_1 h_1 u_{xx} + k(-u + v + \alpha w_x)-a_1(t)\Psi_1 \right] \\
\Psi_2 \\
\dfrac{1}{\rho_3 h_3} \left[ E_3 h_3 v_{xx} - k(-u + v + \alpha w_x) -a_2(t)\Psi_2 \right] \\
\Psi_3 \\
\dfrac{1}{\rho h} \left[ -EI w_{xxxx} + \alpha k (-u + v + \alpha w_x)_x -a_3(t)\Psi_3  \right] \\
\\
\dfrac{\dot\tau_1(t)\rho -1}{\tau_1(t)} z_{1,\rho} \\
\\
\dfrac{\dot\tau_2(t)\rho -1}{\tau_2(t)} z_{2,\rho} \\
\\
\dfrac{\dot\tau_3(t)\rho -1}{\tau_3(t)} z_{3,\rho}
\end{pmatrix},
\end{equation}with domain defined by
\begin{equation}\label{DAA}
D(\mathcal{A}(t))=
\left\lbrace
	\begin{aligned}
		&(u,\Psi_1, v,\Psi_2,w,\Psi_3, z_1, z_2, z_3)^{\top}\in \mathcal{H}_1; \\ & (u,v,w)\in \left[ H^2(0,L)\right]^2\times H^4(0,L),  \\
	& (\Psi_1, \Psi_2,\Psi_3)\in \left[  H_*^1(0,L) \right]^2\times V^2_{*}(0,L),(z_1, z_2, z_3) \in H^1(0,L)
    \\ &w(L)=0,\\
    &z_1(0) = \Psi_1(L),  u_{x}(L) = \alpha_1 \Psi_1(L) + \beta_1 z_1(1),\\
    &z_2(0) = \Psi_2(L),  v_{x}(L) = \alpha_2 \Psi_2(L) + \beta_2 z_2(1),\\
    &z_3(0) = \Psi_{3,x}(L),  w_{xx}(L) = \alpha_3 \Psi_{3,x}(L) + \beta_3 z_3(1)
	\end{aligned}
\right\rbrace.
\end{equation}

Now, taking the triplet $\lbrace \mathcal{A}, \mathcal{H}_1, \mathcal{Z}\rbrace$, with $\mathcal{A} = \left\lbrace \mathcal{A}(t)\colon t\in[0,T] \right\rbrace$, for some $T>0$ fixed and $\mathcal{Z} = D(\mathcal{A}(0))$, we can state and prove the well-posedness result of \eqref{eq:Cauchy} related to $\lbrace \mathcal{A}, \mathcal{H}_1, \mathcal{Z}\rbrace$. This gives us the answer for the Problem $P_1$ mentioned at the beginning of the work.

\begin{theorem} \label{well-lin}
 Assume that $\alpha_i$ and $\beta_i$ are real constants such that  \eqref{eq:CCond2_1}-\eqref{eq:CCond2_3} holds, for $i=1, 2, 3$. Taking $V_0 \in \mathcal{H}_1$,  there exists a unique solution $V \in C([0, +\infty), \mathcal{H}_1)$ to \eqref{eq:Cauchy} whose operator is defined by \eqref{opae2_33}-\eqref{DAA}. Moreover, if $V_0  \in D(\mathcal{A}(0))$, then $V \in C([0, +\infty), D(\mathcal{A}(0)))\cap C^1([0, +\infty), \mathcal{H}_1).$
\end{theorem}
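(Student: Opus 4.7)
The plan is to apply Kato's theorem \ref{th:KatoCauchy} to the abstract Cauchy problem \eqref{abs_11_2}. First I would observe that the boundary and interior conditions defining $D(\mathcal{A}(t))$ in \eqref{DAA} involve only the time-independent constants $\alpha_i,\beta_i$; consequently $D(\mathcal{A}(t))=D(\mathcal{A}(0))=:\mathcal{Z}$ for every $t\geq 0$, and density of $\mathcal{Z}$ in $\mathcal{H}_1$ follows from standard approximation in the factor Sobolev spaces together with density of $C_c^\infty(0,1)$ in $L^2(0,1)$ for the delay components. This settles hypothesis (1) of Theorem \ref{th:KatoCauchy}.

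The heart of the argument, as in the Nicaise--Pignotti--Valein framework, is to replace the inner product $\langle\cdot,\cdot\rangle_t$ on $\mathcal{H}_1$ by a \emph{time-dependent equivalent} one,
\begin{equation*}
\langle (U,z_1,z_2,z_3),(U^\sharp,z_1^\sharp,z_2^\sharp,z_3^\sharp)\rangle_{t}
= \langle U,U^\sharp\rangle_{\mathcal{H}}
+\sum_{i=1}^{3}\xi_i(t)\tau_i(t)\int_0^1 e^{-2\tau_i(t)\rho}\,z_i z_i^\sharp\,d\rho,
\end{equation*}
with weights $\xi_i(t)>0$ chosen so that the boundary contributions generated by integration by parts are absorbed. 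Computing $\langle \mathcal{A}(t)V,V\rangle_t$ for $V\in\mathcal{Z}$ and using the boundary relations $u_x(L)=\alpha_1\Psi_1(L)+\beta_1 z_1(1)$, $v_x(L)=\alpha_2\Psi_2(L)+\beta_2 z_2(1)$, $w_{xx}(L)=\alpha_3\Psi_{3,x}(L)+\beta_3 z_3(1)$, together with the transport equations for $z_i$, one is led to estimate cross terms of the form $z_i(1)\Psi_i(L)$ by Young's inequality; the conditions \eqref{eq:CCond2_1}--\eqref{eq:CCond2_3} are precisely what is needed so that the resulting quadratic form is non-positive up to a term $\kappa(t)\|V\|_t^2$ with $\kappa\in L^\infty_{\mathrm{loc}}$. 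Hence $\mathcal{A}(t)-\kappa(t)I$ is dissipative with respect to $\langle\cdot,\cdot\rangle_t$. Maximality is obtained by showing that for $\lambda>0$ large enough the resolvent equation $(\lambda I-\mathcal{A}(t))V=F$ is solvable: the $z_i$-components are integrated explicitly along characteristics, which expresses $z_i(1)$ linearly in $\Psi_i(L)$ plus data, and the remaining elliptic/variational problem in $(u,v,w)$ on $(0,L)$ is coercive on the product energy space thanks to the boundary terms now absorbed into a symmetric continuous form. Lax--Milgram then yields the unique solution and gives the needed regularity to belong to $\mathcal{Z}$.

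It remains to verify hypotheses (2) and (3) of Kato's theorem. Stability of the family $\{\mathcal{A}(t):t\in[0,T]\}$ with constants independent of $t$ follows from the dissipativity in $\langle\cdot,\cdot\rangle_t$ plus the \emph{uniform} equivalence of the norms $\|\cdot\|_t$ for $t\in[0,T]$: indeed \eqref{eq:TauCond} gives $\tau_{0i}\leq\tau_i(t)\leq M_i$, so the multiplicative factors relating $\|\cdot\|_t$ and $\|\cdot\|_0$ are bounded above and below. The time-regularity assumption (3) reduces to differentiating the explicit coefficients of $\mathcal{A}(t)$ in \eqref{opae2_33}: the prefactors $\dfrac{\dot\tau_i(t)\rho-1}{\tau_i(t)}$ are of class $W^{1,\infty}$ because $\tau_i\in W^{2,\infty}([0,T])$, and the damping terms $a_i(t)$ are $C^1$; therefore $\partial_t\mathcal{A}(t)\in L_\ast^\infty([0,T];B(\mathcal{Z},\mathcal{H}_1))$. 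Invoking Theorem \ref{th:KatoCauchy} delivers the unique strong solution for $V_0\in\mathcal{Z}$, and a density argument extends the result to mild solutions for $V_0\in\mathcal{H}_1$.

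The main obstacle will be the dissipativity step: balancing the three boundary feedbacks against the three delay contributions \emph{simultaneously} in one weighted inner product, and producing the precise thresholds \eqref{eq:CCond2_1}--\eqref{eq:CCond2_3}. Once the correct weights $\xi_i(t)$ are identified and the Young inequalities are calibrated, the rest of the verification of Kato's hypotheses is essentially mechanical.
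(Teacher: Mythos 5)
Your proposal is correct and follows essentially the same route as the paper: verify Kato's three hypotheses, establish dissipativity of $\mathcal{A}(t)-\kappa(t)I$ with respect to a time-dependent inner product using \eqref{eq:CCond2_1}--\eqref{eq:CCond2_3} (which the paper packages as negative-definiteness of the matrices $\Phi^{d_i}_{\alpha_i,\beta_i}$), integrate the transport equations along characteristics to express $z_i(1)$ through $\Psi_i(L)$, and solve the reduced problem in $(u,v,w)$. The only cosmetic differences are your exponentially weighted delay term in place of the paper's weight $\lvert\beta_i\rvert\,\tau_i(t)$ and your use of Lax--Milgram where the paper invokes Lumer--Phillips on the reduced operator $\hat{\mathcal{A}}$ and its adjoint; both lead to the same conclusion.
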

\begin{proof}
The result is established in a standard manner; see, for instance, \cite{Nicaise2009}.
First, we observe that $\mathcal{Z} = D(\mathcal{A}(0))$ forms a dense subset of $\mathcal{H}_1$, and moreover, $D(\mathcal{A}(t)) = D(\mathcal{A}(0))$ for all $t>0$. Hence, condition (1) of Theorem \ref{th:KatoCauchy} is satisfied.

Regarding condition (2) of Theorem \ref{th:KatoCauchy}, we note that straightforward integrations by parts, together with the imposed boundary conditions, yield
\begin{align*}
\left\langle \mathcal{A}(t) V, V \right\rangle_t - \kappa(t) \left\langle V, V \right\rangle_t
\leq& \dfrac{1}{2} \left(\Psi_{1}( L, t), z_1(1, t) \right)
\Phi^{d_1}_{\alpha_1,\beta_1}
\left(\Psi_{1}( L, t), z_1(1, t) \right)^{{\top}}\\
&+\dfrac{1}{2} \left(\Psi_{2}( L, t), z_2(1, t) \right)
\Phi^{d_2}_{\alpha_2,\beta_2}
\left(\Psi_{2}( L, t), z_2(1, t) \right)^{{\top}} \\
&+\dfrac{1}{2} \left(\Psi_{3,x}( L, t), z_3(1, t) \right)
\Phi^{d_3}_{\alpha_3,\beta_3}
\left(\Psi_{3,x}( L, t), z_3(1, t) \right)^{{\top}},
\end{align*}where
\begin{equation}\label{ksum_forall}
\kappa(t) = \displaystyle\sum_{i=1}^3\dfrac{(\dot\tau_i(t)^2+1)^{1/2}}{2\tau_i(t)},
\end{equation}
\begin{equation}\label{matrix_nega_def_albe_111}
\Phi^{d_1}_{\alpha_1,\beta_1} = \begin{pmatrix}
-2E_1h_1\alpha_1 + \lvert\beta_1\rvert & -E_1h_1\beta_1\\
-E_1 h_1\beta_1 & \lvert\beta_1\rvert (d_1 -1 )
\end{pmatrix},
\end{equation}
\begin{equation}\label{matrix_nega_def_albe_222}
\Phi^{d_2}_{\alpha_2,\beta_2} = \begin{pmatrix}
-2E_3h_3\alpha_2 + \lvert\beta_2\rvert & -E_3h_3\beta_2\\
-E_3 h_3 \beta_2 & \lvert\beta_2\rvert (d_2 -1 )
\end{pmatrix},
\end{equation}and
\begin{equation}\label{matrix_nega_def_albe_333}
\Phi^{d_3}_{\alpha_3,\beta_3} = \begin{pmatrix}
-2E I \alpha_3 + \lvert\beta_3\rvert & -E I \beta_3\\
-E I \beta_3 & \lvert\beta_3\rvert (d_3 -1 )
\end{pmatrix}.
\end{equation}

Owing to \eqref{eq:CCond2_1}, \eqref{eq:CCond2_2}, and \eqref{eq:CCond2_3}, it follows that the matrices
$\Phi^{d_1}_{\alpha_1, \beta_1}$, $\Phi^{d_2}_{\alpha_2, \beta_2}$, and $\Phi^{d_3}_{\alpha_3, \beta_3}$
are negative definite. Consequently, we have
\[
\left\langle \mathcal{A}(t) V, V \right\rangle_t - \kappa(t) \left\langle V, V \right\rangle_t \le 0,
\]
which implies that the operator $\tilde{\mathcal{A}}(t) := \mathcal{A}(t) - \kappa(t) I$ is dissipative.

We now state the following claim:
\begin{claim}\label{CL1}
For all $t \in [0,T]$, the operator $\mathcal{A}(t)$ is maximal. Equivalently, there exists $\lambda > 0$ such that
$\lambda I - \mathcal{A}(t)$ is surjective.
\end{claim}

In fact, let us fix $t\in[0,T]$. Given $F=(f_1,f_2, f_3, f_4, f_5, f_6, h_1, h_2, h_3)^T \in \mathcal{H}_1$, we seek a solution $V=(u,\Psi_1, v,\Psi_2,w,\Psi_3, z_1, z_2, z_3)^{\top} \in D(\mathcal{A}(t))$  of the equation $(\lambda I - \mathcal{A}(t))V = F$, that is,
\begin{equation}\label{eq:WP_ewz}
\begin{cases}
\lambda u -\Psi_1 = f_1,\\
\lambda \Psi_1 -\dfrac{1}{\rho_1 h_1}[E_1 h_1 u_{xx} + k(-u+v+\alpha w_x)-a_1(t)\Psi_1] = f_2,\\
\lambda v-\Psi_2 = f_3,\\
\lambda \Psi_2 -\dfrac{1}{\rho_3 h_3}[E_3 h_3 v_{xx} - k(-u+v+\alpha w_x)-a_2(t)\Psi_2] = f_4,\\
\lambda w - \Psi_3 = f_5\\
\lambda \Psi_3 -\dfrac{1}{\rho h}[-E I w_{4x} + \alpha k(-u+v+\alpha w_x)_x-a_3(t)\Psi_3] = f_6,\\
\lambda z_i + \left(\dfrac{1-\dot\tau_i(t)\rho}{\tau_i(t)}\right) z_{i, \rho} = h_i, \, \, \, i=1, 2, 3,\\
u(0) = v(0) = w(0) = w_x(0) =w(L) =0, \\
z_1(0) = \Psi_1(L), \quad u_{x}(L) = \alpha_1 \Psi_1(L) + \beta_1 z_1(1),\\
z_2(0) = \Psi_2(L),  \quad v_{x}(L) =  \alpha_2\Psi_2(L) + \beta_2z_2(1),\\
z_3(0) = \Psi_{3,x}(L), \quad  w_{xx}(L) = \alpha_3 \Psi_{3,x}(L) + \beta_3 z_3(1).
\end{cases}
\end{equation}
One can readily verify that, for any $i =1, 2, 3$,  $z_i$ is given by
\begin{equation*}
z_i(\rho) =
\begin{cases}\displaystyle
z_{i}(0) e^{-\lambda\tau_i(t)\rho} +\displaystyle \tau_i(t) e^{-\lambda\tau_i(t)\rho} \int_0^\rho e^{\lambda\tau_i(t)\sigma} h_i (\sigma)\,d\sigma, &\!\!\!\text{if }\dot\tau_i(t) = 0, \\[1mm]
\\
\displaystyle
e^{\lambda\dfrac{\tau_i(t)}{\dot\tau_i(t)}\ln(1-\dot\tau_i(t)\rho)} \left[ z_{i}(0)
\displaystyle+
\int_0^\rho \dfrac{h_i(\sigma)\tau_i(t)}{1-\dot\tau_i(t)\sigma}  e^{-\lambda\dfrac{\tau_i(t)}{\dot\tau_i(t)}\ln(1-\dot\tau_i(t)\sigma)}\,d\sigma \right], &\!\!\! \text{if }\dot\tau_i(t) \neq 0.
\end{cases}
\end{equation*}
Thereby, $z_i(1) = z_{i}(0) g_{i,0}(t) + g_{h_i}(t)$, in which
\begin{equation*}
g_{i,0}(t) =
\begin{cases}\displaystyle
e^{-\lambda\tau_i(t)}, &\text { if } \dot\tau_i(t) = 0,\\ \displaystyle
e^{\lambda\dfrac{\tau_i(t)}{\dot\tau_i(t)}\ln(1-\dot\tau_i(t))}, & \text { if }\dot\tau_i(t) \neq 0,
\end{cases}
\end{equation*}
and
\begin{equation*}
g_{h_i}(t)=
\begin{cases}\displaystyle
 \tau_i(t) e^{-\lambda \tau_i(t)} \int_0^1 e^{\lambda \tau_i(t) \sigma} h_i(\sigma) d \sigma, & \text { if } \dot{\tau_i}(t)=0, \\
 \\
 \displaystyle
e^{\lambda \dfrac{\tau_i(t)}{\dot\tau_i(t)} \ln(1-\dot{\tau_i}(t))} \int_0^1 \dfrac{h_i(\sigma) \tau_i(t)}{1-\dot{\tau_i}(t) \sigma} e^{-\lambda \dfrac{\tau_i(t)}{\dot{\tau_i}(t)} \ln (1-\dot{\tau_i}(t) \sigma)} d \sigma, & \text { if } \dot{\tau_i}(t) \neq 0.
\end{cases}
\end{equation*}Combining the latter with \eqref{eq:WP_ewz}, it follows that $U=(u,\Psi_1, v,\Psi_2,w,\Psi_3)^{\top}$ is solution of the system
\begin{equation}\label{eq:WP_ew1}
\begin{cases}
\lambda u -\Psi_1 = f_1,\\
\lambda \Psi_1 -\dfrac{1}{\rho_1 h_1}[E_1 h_1 u_{xx} + k(-u+v+\alpha w_x)-a_1(t)\Psi_1] = f_2,\\
\lambda v-\Psi_2 = f_3,\\
\lambda \Psi_2 -\dfrac{1}{\rho_3 h_3}[E_3 h_3 v_{xx} - k(-u+v+\alpha w_x)-a_2(t)\Psi_2] = f_4,\\
\lambda w - \Psi_3 = f_5\\
\lambda \Psi_3 -\dfrac{1}{\rho h}[-E I w_{4x} + \alpha k(-u+v+\alpha w_x)_x-a_3(t)\Psi_3] = f_6,
\end{cases}
\end{equation}
and satisfy the boundary conditions
\begin{equation*}
\begin{cases}
u(0) = v(0) = w(0) = w_x(0) =w(L) = 0, \\
u_{x}(L) = (\alpha_1 +\beta_1 g_{1,0}(t))\Psi_1(L) + \beta_1 g_{h_1}(t),\\
v_{x}(L) = (\alpha_2 +\beta_2 g_{2,0}(t)) \Psi_2(L) + \beta_2 g_{h_2}(t),\\
w_{xx}(L) = (\alpha_3 +\beta_3 g_{3,0}(t))\Psi_{3,x}(L) + \beta_3 g_{h_3}(t).
\end{cases}
\end{equation*}

Now, let $ \phi_i \in C^{\infty}(\left[ 0, L \right])$, for $i=1, 2, 3$, be a functions such that $ \phi_{1}(0)=\phi_{1,x}(L)=0 $, $ \phi_{2}(0)=\phi_{2,x}(L)=0 $  and $ \phi_{3}(0)=\phi_{3,x}(0)=\phi_{3}(L)=\phi_{3,xx}(L)=\phi_{3,xxx}(L)=0 $. Next, for $i=1, 2, 3$, we define a functions $\varphi_i(x,\cdot) = \phi_i(x)\beta_i g_{h_i}(\cdot) \in C^\infty([0,L])$ and let
\begin{equation*}
\begin{cases}
\hat{u}:=u - \varphi_1,\\
\hat{v}:=v - \varphi_2,\\
\hat{w}:=w - \varphi_3.
\end{cases}
\end{equation*}This, together with \eqref{eq:WP_ew1}, implies that  $\hat{U}=(\hat{u},\Psi_1, \hat{v},\Psi_2, \hat{w}, \Psi_3)^{\top}$ satisfy
\begin{equation*}
\begin{cases}
\lambda \hat{u} -\Psi_1 = \tilde{f_1},\\
\lambda \Psi_1 -\dfrac{1}{\rho_1 h_1}[E_1 h_1 \hat{u}_{xx} + k(-\hat{u}+\hat{v}+\alpha \hat{w}_x)-a_1(t)\Psi_1] = \tilde{f_2},\\
\lambda \hat{v}-\Psi_2 = \tilde{f_3},\\
\lambda \Psi_2 -\dfrac{1}{\rho_3 h_3}[E_3 h_3 \hat{v}_{xx} - k(-\hat{u}+\hat{v}+\alpha \hat{w}_x)-a_2(t)\Psi_2] = \tilde{f_4},\\
\lambda \hat{w} - \Psi_3 = \tilde{f_5}\\
\lambda \Psi_3 -\dfrac{1}{\rho h}[-E I \hat{w}_{4x} +  \alpha k(-\hat{u}+\hat{v}+\alpha \hat{w}_x)_x-a_3(t)\Psi_3] = \tilde{f_6},
\end{cases}
\end{equation*}where
\begin{equation*}
\begin{cases}
 \tilde{f_1} = f_1-\lambda \varphi_1,\\
  \tilde{f_2} = f_2  -\dfrac{1}{\rho_1 h_1}[-E_1 h_1 \varphi_{1,xx} + k(\varphi_1-\varphi_2-\alpha \varphi_{3,x})],\\
\tilde{f_3} = f_3-\lambda \varphi_2,\\
\tilde{f_4} = f_4  -\dfrac{1}{\rho_3 h_3}[-E_3 h_3 \varphi_{2,xx} - k(\varphi_1-\varphi_2-\alpha \varphi_{3,x})],\\
\tilde{f_5} = f_5-\lambda \varphi_3\\
\tilde{f_6} = f_6  -\dfrac{1}{\rho h}[E I \varphi_{3,xxxx} + \alpha k(\varphi_1-\varphi_2-\alpha \varphi_{3,x})_x],
\end{cases}
\end{equation*}
as well as the boundary conditions
\begin{equation*}
\begin{cases}
\hat{u}(0) = \hat{v}(0) = \hat{w}(0) = \hat{w}_x(0) =\hat{w}(L)  =0, \\
\hat{u}_{x}(L) = (\alpha_1 +\beta_1 g_{1,0}(t))\Psi_1(L),\\
\hat{v}_{x}(L) = (\alpha_2 +\beta_2 g_{2,0}(t)) \Psi_2(L),\\
\hat{w}_{xx}(L) = (\alpha_3 +\beta_3 g_{3,0}(t))\Psi_{3,x}(L).
\end{cases}
\end{equation*}

For simplicity, we continue to denote the translated variables by $u$, $v$, and $w$.
It follows that $0 < g_{i,0}(t) < 1$ for all $i = 1,2,3$ (see, e.g., \cite{Boumediene2024,BaCBS2025}).
Hence, from \eqref{eq:CCond2_1}--\eqref{eq:CCond2_3}, we deduce that
\[
\tilde{\alpha}_i := \alpha_i + \beta_i g_{i,0}(t) > 0, \quad \text{for each } i = 1,2,3.
\]
Consequently, establishing Claim \ref{CL1} reduces to proving that
$\lambda I - \hat{\mathcal{A}}$ is surjective, where $\hat{\mathcal{A}}$ is defined by
\begin{equation*}
\hat{\mathcal{A}}(t)U:=\begin{pmatrix}
\Psi_1 \\
\dfrac{1}{\rho_1 h_1} \left[ E_1 h_1 u_{xx} + k(-u + v + \alpha w_x)-a_1(t)\Psi_1 \right] \\
\Psi_2 \\
\dfrac{1}{\rho_3 h_3} \left[ E_3 h_3 v_{xx} - k(-u + v + \alpha w_x) -a_2(t)\Psi_2 \right] \\
\Psi_3 \\
\dfrac{1}{\rho h} \left[ -EI w_{xxxx} + \alpha k (-u + v + \alpha w_x)_x -a_3(t)\Psi_3  \right]
\end{pmatrix},
\end{equation*}with a dense domain
\begin{equation*}
D(\hat{\mathcal{A}}(t))=
\left\lbrace
	\begin{aligned}
		&(u,\Psi_1, v,\Psi_2,w,\Psi_3)^{\top}\in \mathcal{H};\,   (u,v,w)\in \left[ H^2(0,L)\right]^2\times H^4(0,L),  \\
	& (\Psi_1, \Psi_2,\Psi_3)\in \left[  H_*^1(0,L) \right]^2\times V^2_{*}(0,L),
    \, \, w(L)=0,\\
    &u_{x}(L) = \tilde\alpha_1\Psi_1(L), \, \, v_{x}(L) = \tilde\alpha_2 \Psi_2(L),\, \, w_{xx}(L) = \tilde\alpha_3\Psi_{3,x}(L)
	\end{aligned}
\right\rbrace \subset \mathcal{H}.
\end{equation*}

Now, observe that adjoint of $\hat{\mathcal{A}}$, denoted by $\hat{\mathcal{A}}^\ast$, is given by
\begin{equation*}
\hat{\mathcal{A}}^\ast(t)U:=\begin{pmatrix}
-\Psi_1 \\
-\dfrac{1}{\rho_1 h_1} \left[ E_1 h_1 u_{xx} + k(-u + v + \alpha w_x)-a_1(t)\Psi_1 \right] \\
-\Psi_2 \\
-\dfrac{1}{\rho_3 h_3} \left[ E_3 h_3 v_{xx} - k(-u + v + \alpha w_x) -a_2(t)\Psi_2 \right] \\
-\Psi_3 \\
-\dfrac{1}{\rho h} \left[ -EI w_{xxxx} + \alpha k (-u + v + \alpha w_x)_x -a_3(t)\Psi_3  \right]
\end{pmatrix},
\end{equation*}with
\begin{equation*}
D(\hat{\mathcal{A}}^\ast(t))=
\left\lbrace
	\begin{aligned}
		&(u,\Psi_1, v,\Psi_2,w,\Psi_3)^{\top}\in \mathcal{H};\,   (u,v,w)\in \left[ H^2(0,L)\right]^2\times H^4(0,L),  \\
	& (\Psi_1, \Psi_2,\Psi_3)\in \left[  H_*^1(0,L) \right]^2\times V^2_{*}(0,L),
    \, \, w(L)=0,\\
    &u_{x}(L) = -\tilde\alpha_1\Psi_1(L), \, \, v_{x}(L) = -\tilde\alpha_2 \Psi_2(L),\, \, w_{xx}(L) = -\tilde\alpha_3\Psi_{3,x}(L)
	\end{aligned}
\right\rbrace.
\end{equation*}
One can readily verify that
\begin{align*}
\left\langle  \hat{\mathcal{A}} U,U
\right\rangle_{\mathcal{H}} =&-a_1(t)\Vert \Psi_1\Vert^2 -a_2(t)\Vert \Psi_2\Vert^2 -a_3(t)\Vert \Psi_3\Vert^2 \\
&- E_1 h_1\tilde\alpha_1 \Psi_1^2(L)- E_3 h_3\tilde\alpha_2 \Psi_2^2(L)- E I\tilde\alpha_3 \Psi_{3,x}^2(L),
\end{align*}and
\begin{align*}
\left\langle  \hat{\mathcal{A}}^{\ast} U,U
\right\rangle_{\mathcal{H}} =&-a_1(t)\Vert \Psi_1\Vert^2 -a_2(t)\Vert \Psi_2\Vert^2 -a_3(t)\Vert \Psi_3\Vert^2 \\
&- E_1 h_1\tilde\alpha_1 \Psi_1^2(L)- E_3 h_3\tilde\alpha_2 \Psi_2^2(L)- E I\tilde\alpha_3 \Psi_{3,x}^2(L).
\end{align*}
This guarantees that the operators $\hat{\mathcal{A}}$ and $\hat{\mathcal{A}}^\ast$ are dissipative and hence, the desired result follows from the Lumer-Phillips Theorem (see, e.g., \cite{Pazy}), which establishes Claim \ref{CL1}.

Consequently, $\tilde{\mathcal{A}}(t)$ generates a strongly continuous semigroup on $\mathcal{H}$, and the family
$\tilde{\mathcal{A}} = \{\tilde{\mathcal{A}}(t) : t \in [0,T]\}$ forms a stable family of generators in $\mathcal{H}$
with a stability constant independent of $t$. Therefore, condition (2) of Theorem \ref{th:KatoCauchy} is satisfied.

Lastly, since $\tau_i \in W^{2,\infty}([0, T ])$ for all $T>0$, and $i=1, 2, 3$, we reach that
$$
\dot{\kappa}(t)=\sum_{i=1}^3\left[\dfrac{\ddot{\tau_i}(t) \dot{\tau_i}(t)}{2 \tau_i(t)\left(\dot{\tau_i}(t)^2+1\right)^{1 / 2}}-\dfrac{\dot{\tau_i}(t)\left(\dot{\tau_i}(t)^2+1\right)^{1 / 2}}{2 \tau_i(t)^2}\right]
$$
is bounded on $[0, T]$, for all $T>0$, and
\begin{equation*}
\dfrac{d}{dt}\mathcal{A}(t)V=\begin{pmatrix}
0 \\
-\dot a_1(t)\Psi_1  \\
0 \\
-\dot a_2(t)\Psi_2  \\
0 \\
-\dot a_3(t)\Psi_3   \\
\\
\dfrac{\ddot{\tau_1}(t) \tau_1(t) \rho-\dot{\tau_1}(t)(\dot{\tau_1}(t) \rho-1)}{\tau_1(t)^2} z_{1,\rho} \\
\\
\dfrac{\ddot{\tau_2}(t) \tau_2(t) \rho-\dot{\tau_2}(t)(\dot{\tau_2}(t) \rho-1)}{\tau_2(t)^2} z_{2,\rho} \\
\\
\dfrac{\ddot{\tau_3}(t) \tau_3(t) \rho-\dot{\tau_3}(t)(\dot{\tau_3}(t) \rho-1)}{\tau_3(t)^2} z_{3,\rho}
\end{pmatrix}.
\end{equation*}
Moreover, the coefficients of $z_{i,\rho}$ and $\Psi_i$ are bounded on $[0, T]$ for $i = 1, 2, 3$,
so that the regularity condition (3) of Theorem \ref{th:KatoCauchy} is satisfied.

In summary, we have verified all the assumptions of Theorem \ref{th:KatoCauchy}.
Hence, for each $V_0 \in D({\mathcal{A}}(0))$, the Cauchy problem
\begin{equation*}
\begin{cases}
\tilde{V}_t(t) = \tilde{\mathcal{A}}(t) \tilde{V}(t), \\
\tilde{V}(0) = V_0, \quad t>0,
\end{cases}
\end{equation*}
admits a unique solution
\[
\tilde{V} \in C([0, \infty), \mathcal{H}_1) \cap C([0, \infty), D({\mathcal{A}}(0))) \cap C^1([0, \infty), \mathcal{H}_1).
\]
Therefore, the solution of \eqref{eq:Cauchy} is explicitly given by
\[
V(t) = e^{\int_0^t \kappa(s)\, ds} \, \tilde{V}(t),
\]
which proves Theorem \ref{well-lin}.
\end{proof}

We also observe that the energy $E(t)$ associated with system \eqref{Rao_1_12} satisfies the following relation.
\begin{proposition}\label{pr:Diss}
 Suppose $\alpha_i$ and $\beta_i$ are real constants such \eqref{eq:CCond2_1}-\eqref{eq:CCond2_3} holds, for $i=1, 2, 3$.  Then, for any mild solution of \eqref{abs_11_2}, the energy $E(t)$ defined by \eqref{eq:En} is non-increasing and
\begin{align}\label{eq:EnDiss3}
\dfrac{d}{dt}E(t)
 =&-a_1(t)\Vert u_t\Vert^2 -a_2(t)\Vert v_t\Vert^2 -a_3(t)\Vert w_t\Vert^2 \\
 &+
 \dfrac{1}{2} \begin{pmatrix}
u_t( L, t) \\ u_{t}(L, t-\tau_1(t))
\end{pmatrix}^{{\top}}
\Phi^{\dot\tau_1(t)}_{\alpha_1,\beta_1}
\begin{pmatrix}
u_{t}(L, t) \\ u_{t}(L, t-\tau_1(t))
\end{pmatrix}\nonumber \\
 &+
 \dfrac{1}{2} \begin{pmatrix}
v_t( L, t) \\ v_{t}(L, t-\tau_2(t))
\end{pmatrix}^{{\top}}
\Phi^{\dot\tau_2(t)}_{\alpha_2,\beta_2}
\begin{pmatrix}
v_{t}(L, t) \\ v_{t}(L, t-\tau_2(t))
\end{pmatrix}\nonumber \\
 &+
 \dfrac{1}{2} \begin{pmatrix}
w_{tx}( L, t) \\ w_{tx}(L, t-\tau_3(t))
\end{pmatrix}^{{\top}}
\Phi^{\dot\tau_3(t)}_{\alpha_3,\beta_3}
\begin{pmatrix}
w_{tx}(L, t) \\ w_{tx}(L, t-\tau_3(t))
\end{pmatrix},\nonumber
\end{align}
 where the matrices $\Phi^{\dot\tau_i(t)}_{\alpha_i,\beta_i}$ are given by \eqref{matrix_nega_def_albe_111}, \eqref{matrix_nega_def_albe_222}  and \eqref{matrix_nega_def_albe_333}, respectively,  with ${\dot\tau_i(t)}$ instead of $d_i$, for $i= 1, 2, 3$.
\end{proposition}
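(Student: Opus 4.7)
The plan is a standard energy–multiplier computation. I will carry it out formally for strong solutions $V_0\in D(\mathcal{A}(0))$ (whose existence is guaranteed by Theorem~\ref{well-lin}) and then extend the identity to arbitrary mild solutions by a density argument. I split the work into three steps.

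\textbf{Step 1.} Multiplying the three evolution equations of \eqref{Rao_1_12} by $u_t$, $v_t$ and $w_t$ respectively, integrating over $(0,L)$, and combining the result with $\tfrac{d}{dt}$ of the elastic strain energies via integration by parts in $x$, the clamped conditions $u(0)=v(0)=w(0)=w_x(0)=w(L)=0$ (hence also $u_t(0)=v_t(0)=w_t(0)=w_t(L)=0$) annihilate every boundary trace at $x=0$ and also $EI\,w_{xxx}(L)\,w_t(L)$ at $x=L$. What survives at $x=L$ is
\[
E_1h_1\,u_x(L)u_t(L)+E_3h_3\,v_x(L)v_t(L)+EI\,w_{xx}(L)w_{tx}(L).
\]
A direct calculation shows that the three coupling integrals against $k(-u+v+\alpha w_x)$ produced by the PDEs sum to $-\tfrac{d}{dt}\tfrac{k}{2}\|-u+v+\alpha w_x\|^2$, which cancels exactly with the time-derivative of the corresponding term in \eqref{eq:En}. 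The only remaining interior contributions are the dissipative terms $-a_i(t)\|\cdot\|^2$, $i=1,2,3$.

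\textbf{Step 2.} Using the change of variables \eqref{z_iforchange}, the delay integrals in \eqref{eq:En} read $\tfrac{|\beta_i|}{2}\tau_i(t)\int_0^1 z_i^2\,d\rho$. Differentiating in $t$, invoking the transport equations \eqref{eq:tr_z1}--\eqref{eq:tr_z3} and integrating once by parts in $\rho\in(0,1)$ yield
\begin{equation*}
\frac{d}{dt}\!\left[\frac{|\beta_i|}{2}\tau_i(t)\int_0^1 z_i^2(\rho,t)\,d\rho\right]
=\frac{|\beta_i|}{2}\,z_i^2(0,t)-\frac{|\beta_i|}{2}\bigl(1-\dot\tau_i(t)\bigr)z_i^2(1,t),
\end{equation*}
the two contributions of the form $\dot\tau_i(t)\int_0^1 z_i^2\,d\rho$ (one from differentiating $\tau_i(t)$, one from the boundary term $\dot\tau_i(t)\rho$ in the transport equation) cancelling. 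Recalling $z_1(0,t)=u_t(L,t)$, $z_1(1,t)=u_t(L,t-\tau_1(t))$ and the analogous identities for $z_2,z_3$, this provides the explicit boundary contribution of the delay integrals.

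\textbf{Step 3.} Inserting the delayed feedback laws $u_x(L)=\alpha_1 u_t(L)+\beta_1 u_t(L,t-\tau_1(t))$, and analogously for $v_x(L)$ and $w_{xx}(L)$, into the three surviving boundary traces of Step 1 and pairing them with the boundary contributions produced in Step 2, the quadratic coefficients of the pair $(u_t(L,t),u_t(L,t-\tau_1(t)))$, and analogously for $(v_t(L,\cdot),v_t(L,\cdot-\tau_2))$ and $(w_{tx}(L,\cdot),w_{tx}(L,\cdot-\tau_3))$, reassemble precisely into the symmetric matrices $\Phi^{\dot\tau_i(t)}_{\alpha_i,\beta_i}$ of \eqref{matrix_nega_def_albe_111}--\eqref{matrix_nega_def_albe_333} with $d_i$ replaced by $\dot\tau_i(t)$. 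This is exactly \eqref{eq:EnDiss3}. Monotonicity of $E(t)$ then follows at once: conditions \eqref{eq:CCond2_1}--\eqref{eq:CCond2_3} together with $\dot\tau_i(t)\le d_i<1$ make each $\Phi^{\dot\tau_i(t)}_{\alpha_i,\beta_i}$ negative semi-definite, and the interior terms $-a_i(t)\|\cdot\|^2$ are non-positive by \eqref{damping_123}.

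The main delicate point is the bookkeeping of the beam equation: $EI\,w_{xxxx}$ requires two integrations by parts against $w_t$, producing four boundary traces of which only $EI\,w_{xx}(L)w_{tx}(L)$ is non-zero; similarly, the coupling term $\alpha k(-u+v+\alpha w_x)_x$ needs one integration by parts, whose boundary contribution vanishes thanks to $w_t(0)=w_t(L)=0$. Verifying the full three-way cancellation of the coupling integrals is the only real subtlety; once it is handled, the remainder mirrors the now-standard computation for wave equations with time-varying delayed feedback, as in \cite{Nicaise2009,bao2}.
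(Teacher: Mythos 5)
Your proposal is the standard energy identity computation, and it is essentially the proof the paper has in mind: the paper itself omits the argument as ``straightforward,'' and the same integrations by parts appear in its verification of dissipativity for the operator $\mathcal{A}(t)$ in Section~\ref{sec2}. Steps 1 and 2 are carried out correctly — in particular the three-way cancellation of the coupling integrals into $-\tfrac{d}{dt}\tfrac{k}{2}\|-u+v+\alpha w_x\|^2$ and the cancellation of the two $\dot\tau_i(t)\int_0^1 z_i^2\,d\rho$ contributions are exactly right.

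One caveat on Step 3, which you assert rather than verify: the surviving flux from Step 1 is $+E_1h_1\,u_x(L)u_t(L)$ (and analogously for $v$ and $w$), so substituting the feedback law $u_x(L,t)=\alpha_1 u_t(L,t)+\beta_1 u_t(L,t-\tau_1(t))$ \emph{exactly as printed} in \eqref{Rao_1_12} produces the quadratic form with entries $+2E_1h_1\alpha_1+|\beta_1|$ and $+E_1h_1\beta_1$, i.e.\ the \emph{opposite} sign on the $\alpha_1$ and $\beta_1$ entries relative to $\Phi^{\dot\tau_1(t)}_{\alpha_1,\beta_1}$ in \eqref{matrix_nega_def_albe_111}. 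The identity \eqref{eq:EnDiss3} as stated (and the negative definiteness needed for monotonicity and for Theorem~\ref{th:Lyapunov0}) requires the dissipative convention $u_x(L,t)=-\alpha_1 u_t(L,t)-\beta_1 u_t(L,t-\tau_1(t))$, etc.; this is evidently a sign slip in the statement of \eqref{Rao_1_12} rather than a flaw in your method, but a complete proof should make the sign bookkeeping explicit instead of claiming the terms ``reassemble precisely.''
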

\begin{proof}
Considering the energy identity \eqref{eq:En}, we define
\begin{align*}
E_1(t) = \dfrac{1}{2} &\left(\rho_1h_1\|u_t\|^2+E_1h_1\|u_x\|^2+\rho_3h_3\|v_t\|^2+E_3h_3\left\|v_x\right\|^2+\rho h \left\|w_t\right\|^2 +EI\left\|w_{xx}\right\|^2\right.\\
&\left.+k\left\|-u+v+\alpha w_{x}\right\|^2 \right).
\end{align*}
We multiply equations $\eqref{Rao_1_12}_1$, $\eqref{Rao_1_12}_2$, $\eqref{Rao_1_12}_3$ by $u_t$, $v_t$, $w_t$, respectively. Then, by integrating by parts on $(0,L)$ and using the boundary conditions, we obtain
\begin{equation}\label{aux_1}
\begin{split}
\dfrac{d}{dt}E_1(t)
 =&-a_1(t)\Vert u_t\Vert^2 -a_2(t)\Vert v_t\Vert^2 -a_3(t)\Vert w_t\Vert^2 \\
 &-E_1h_1\alpha_1u^2_t(L,t)-E_1h_1\beta_1u_t(L,t) u_t(L,t-\tau_1(t)) \\
 &-E_3h_3\alpha_2v^2_t(L,t)-E_3h_3\beta_2  v_t(L,t) v_t(L,t-\tau_2(t))\\
 &-E I\alpha_3w^2_{tx}(L,t)-E I\beta_3w_{tx}(L,t) w_{tx}(L,t-\tau_3(t)).
 \end{split}
\end{equation}
On the other hand, from  \eqref{z_iforchange}-\eqref{eq:tr_z3}, for $i=1, 2, 3$, we deduce that
\begin{align}\label{aux_2}
\frac{d}{dt}\left(\dfrac{\lvert \beta_2\rvert }{2}\tau_i(t) \int_0^1 z_i^2(L, t-\tau_i(t)\rho )\,d\rho \right)=\frac{|\beta_i|}{2}(\dot\tau_i(t)-1)z_i^2(1, t)  + \dfrac{\lvert\beta_i\rvert}{2}z_i^2(0,t).
\end{align} Thus, from \eqref{eq:En}, \eqref{aux_1}, \eqref{aux_2} and using the auxiliary variables  \eqref{z_iforchange}, it results in
\begin{equation*}
\begin{split}
\dfrac{d}{dt}E(t)
 =&-a_1(t)\Vert u_t\Vert^2 -a_2(t)\Vert v_t\Vert^2 -a_3(t)\Vert w_t\Vert^2 \\
 &\frac{1}{2}\left((-2E_1h_1\alpha_1+|\beta_1|)u^2_t(L,t)-2E_1h_1\beta_1u_t(L,t) u_t(L,t-\tau_1(t))\right.\\
 &\left.+|\beta_1|(\dot\tau_1(t)-1)u^2_t(L,t-\tau_1(t))\right)\\
 &\frac{1}{2}\left((-2E_3h_3\alpha_2+|\beta_2|)v^2_t(L,t)-2E_3h_3\beta_2  v_t(L,t) v_t(L,t-\tau_2(t))\right.\\
 &\left.+|\beta_2|(\dot\tau_2(t)-1)v^2_t(L,t-\tau_1(t))\right)\\
 &\frac{1}{2}\left((-2E I\alpha_3+|\beta_3)|w^2_{tx}(L,t)-2E I\beta_3w_{tx}(L,t) w_{tx}(L,t-\tau_3(t))\right.\\
 &\left.+|\beta_3|(\dot\tau_3(t)-1)w^2_{tx}(L,t-\tau_1(t))\right)\\
 =&-a_1(t)\Vert u_t\Vert^2 -a_2(t)\Vert v_t\Vert^2 -a_3(t)\Vert w_t\Vert^2 \\
 &+
 \dfrac{1}{2} \begin{pmatrix}
u_t( L, t) \\ u_{t}(L, t-\tau_1(t))
\end{pmatrix}^{\top}
\Phi^{\dot\tau_1(t)}_{\alpha_1,\beta_1}
\begin{pmatrix}
u_{t}(L, t) \\ u_{t}(L, t-\tau_1(t))
\end{pmatrix}\nonumber \\
 &+
 \dfrac{1}{2} \begin{pmatrix}
v_t( L, t) \\ v_{t}(L, t-\tau_2(t))
\end{pmatrix}^{\top}
\Phi^{\dot\tau_2(t)}_{\alpha_2,\beta_2}
\begin{pmatrix}
v_{t}(L, t) \\ v_{t}(L, t-\tau_2(t))
\end{pmatrix}\nonumber \\
 &+
 \dfrac{1}{2} \begin{pmatrix}
w_{tx}( L, t) \\ w_{tx}(L, t-\tau_3(t))
\end{pmatrix}^{\top}
\Phi^{\dot\tau_3(t)}_{\alpha_3,\beta_3}
\begin{pmatrix}
w_{tx}(L, t) \\ w_{tx}(L, t-\tau_3(t))
\end{pmatrix},\nonumber
 \end{split}
\end{equation*} obtaining \eqref{eq:EnDiss3}.
\end{proof}

Now, we are ready to state the following result:
\begin{proposition}\label{pr:Kato}
Assume that $\alpha_i$ and $\beta_i$ are real constants such that  \eqref{eq:CCond2_1}-\eqref{eq:CCond2_3} holds, for $i=1, 2, 3$. Then, for $V= (U, z_1, z_2, z_3)^{\top}=(u,u_t, v, v_t, w, w_t, z_1, z_2, z_3)^{\top}$, solution of \eqref{abs_11_2}, the following estimate holds:
	\begin{equation}\label{eq:Kato1}
	\begin{split}
		\lVert
		U(t)
		\rVert_{\mathcal{H}}^2
		&+ \sum_{i=1}^3\lvert \beta_i \rvert \tau_{0i}
		\lVert
		z_i(\cdot, t)
		\rVert_{L^2(0,1)}^2\leq
		\lVert
		U_0
		\rVert_{\mathcal{H}}^2
		+ \lvert \beta_1 \rvert\tau_1(0)
		\lVert
		f_0(-\tau_1(0)\cdot)
		\rVert_{L^2(0,1)}^2 \\
		&+ \lvert \beta_2 \rvert\tau_2(0)
		\lVert
		g_0(-\tau_2(0)\cdot)
		\rVert_{L^2(0,1)}^2 +\lvert \beta_3 \rvert\tau_3(0)
		\lVert
		h_0(-\tau_3(0)\cdot)
		\rVert_{L^2(0,1)}^2 .
	\end{split}
	\end{equation}
	Furthermore, for every initial condition $V_0\in \mathcal{H}_1$, we have that
	\begin{equation}\label{eq:Katotr0}
	\begin{split}
		&\lVert u_{t}(L, \cdot) \rVert_{L^2(0,T)}^2+\lVert v_{t}(L, \cdot) \rVert_{L^2(0,T)}^2+\lVert w_{tx}(L, \cdot) \rVert_{L^2(0,T)}^2
		+\sum_{i=1}^3\lVert z_i(1, \cdot) \rVert_{L^2(0,T)}^2\\
		\leq&
		\lVert
		U_0
		\rVert_{\mathcal{H}}^2
		+ \lvert \beta_1 \rvert\tau_1(0)
		\lVert
		f_0(-\tau_1(0)\cdot)
		\rVert_{L^2(0,1)}^2+ \lvert \beta_2 \rvert\tau_2(0)
		\lVert
		g_0(-\tau_2(0)\cdot)
		\rVert_{L^2(0,1)}^2 \\&+\lvert \beta_3 \rvert\tau_3(0)
		\lVert
		h_0(-\tau_3(0)\cdot)
		\rVert_{L^2(0,1)}^2.
		\end{split}
	\end{equation}
On the other hand, for the initial datum, we have the following estimates
\begin{equation}\label{eq:Kato3}
\begin{split}
\lVert U_0\rVert_{\mathcal{H}}^2 \leq& \dfrac{1}{T}\lVert U \rVert_{L^2(0,T; \mathcal{H})}^2+2\sum_{i=1}^3 \Vert a_i\Vert_{L^{\infty}(0, T)}\Vert (u_t, v_t, w_t) \Vert_{[L^2(0,T; L^2(0, L))]^3}^2
\\&+E_1h_1(2\alpha_1+\lvert\beta_1\rvert)\lVert u_{t}(L, \cdot)\rVert_{L^2(0,T)}^2 + E_3 h_3(2\alpha_2+\lvert\beta_2\rvert)\lVert v_{t}(L, \cdot)\rVert_{L^2(0,T)}^2 \\
&  +E I(2\alpha_3+\lvert\beta_3\rvert)\lVert w_{tx}(L, \cdot)\rVert_{L^2(0,T)}^2 +\sum_{i=1}^3\lvert\beta_i\rvert
		\lVert
		z_i(1, \cdot)
		\rVert_{L^2(0,T)}^2
		\end{split}
	\end{equation}
	and
	\begin{equation}\label{eq:Kato4}
    \begin{cases}
\lVert f_0(-\tau_1(0)\cdot)\rVert_{L^2(0,1)}^2 \leq
		C_1(d_1,M_1)\left(\lVert z_1(\cdot, T )\rVert_{L^2(0,1)}
		+  \lVert z_1(1,\cdot) \rVert_{L^2(0,T)}^2\right),\\
    \lVert g_0(-\tau(0)\cdot)\rVert_{L^2(0,1)}^2 \leq
		C_2(d_2,M_2)\left(\lVert z_2(\cdot, T )\rVert_{L^2(0,1)}
		+  \lVert z_2(1, \cdot) \rVert_{L^2(0,T)}^2\right),\\
        \lVert h_0(-\tau(0)\cdot)\rVert_{L^2(0,1)}^2 \leq
		C_3(d_3,M_3)\left(\lVert z_3(\cdot, T )\rVert_{L^2(0,1)}
		+  \lVert z_3(1, \cdot) \rVert_{L^2(0,T)}^2\right).
    \end{cases}
	\end{equation}
\end{proposition}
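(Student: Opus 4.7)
The plan is to derive the four inequalities by exploiting, in sequence, the monotonicity of the energy $E(t)$, the strict negative-definiteness of the boundary matrices $\Phi^{\dot\tau_i}_{\alpha_i,\beta_i}$, a fundamental-theorem-of-calculus argument on $\|U(t)\|^2_{\mathcal{H}}$, and the pure-transport structure of the auxiliary equations for $z_i$.

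For \eqref{eq:Kato1}, since Proposition \ref{pr:Diss} already guarantees $E(t)\le E(0)$, I would simply expand both sides using the definition \eqref{eq:En}, bound the delay contributions from below using $\tau_i(t)\ge \tau_{0i}$ on the left, and use the definitions $z_i(\rho,0)=f_0(-\tau_1(0)\rho)$ etc.\ on the right, then multiply by $2$. For \eqref{eq:Katotr0}, I would integrate \eqref{eq:EnDiss3} from $0$ to $T$. Hypotheses \eqref{eq:CCond2_1}-\eqref{eq:CCond2_3} make the matrices $\Phi^{\dot\tau_i(t)}_{\alpha_i,\beta_i}$ uniformly negative-definite on $[0,T]$, so their quadratic forms are bounded above by $-c_0$ times $u_t^2(L,t)+z_1^2(1,t)$ (and the analogues for $v$ and $w$). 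Dropping the non-positive damping contribution and using $E(T)\ge 0$ then yields \eqref{eq:Katotr0} after absorbing $c_0$ into the constants on the left.

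For \eqref{eq:Kato3}, I would start from the identity
\begin{equation*}
\|U_0\|^2_{\mathcal{H}}=\|U(t)\|^2_{\mathcal{H}}-\int_0^t\tfrac{d}{ds}\|U(s)\|^2_{\mathcal{H}}\,ds,
\end{equation*}
integrate it over $t\in[0,T]$, and divide by $T$, obtaining $\|U_0\|^2_{\mathcal{H}}\le T^{-1}\|U\|^2_{L^2(0,T;\mathcal{H})}+\int_0^T\bigl|\tfrac{d}{dt}\|U(t)\|^2_{\mathcal{H}}\bigr|\,dt$. To control the derivative, I multiply each of the three PDEs in \eqref{Rao_1_12} by the corresponding velocity $u_t, v_t, w_t$ and integrate by parts in $x$; the clamped/hinged boundary conditions at $x=0$ together with $w(L,t)=0$ (which gives $w_t(L,t)=0$ and $w_{xt}(0,t)=0$) annihilate most contributions, leaving only the damping terms and the three boundary traces $E_1h_1\,u_x(L)u_t(L)$, $E_3h_3\,v_x(L)v_t(L)$ and $-EI\,w_{xx}(L)w_{xt}(L)$. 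Substituting the dynamic boundary conditions $u_x(L)=\alpha_1 u_t(L)+\beta_1 z_1(1)$, and its analogues for $v_x(L)$ and $w_{xx}(L)$, followed by Young's inequality with appropriately chosen weights, produces the constants $E_1h_1(2\alpha_1+|\beta_1|)$, $E_3h_3(2\alpha_2+|\beta_2|)$, $EI(2\alpha_3+|\beta_3|)$ in front of the boundary traces and $|\beta_i|$ in front of the $\|z_i(1,\cdot)\|^2_{L^2(0,T)}$ terms, yielding \eqref{eq:Kato3}.

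For \eqref{eq:Kato4}, I multiply the transport equation \eqref{eq:tr_z1} by $2z_1$ and integrate over $\rho\in(0,1)$; a single integration by parts in $\rho$, after regrouping the term $\dot\tau_1(t)\int_0^1 z_1^2\,d\rho$, produces the identity $\tfrac{d}{dt}\bigl(\tau_1(t)\int_0^1 z_1^2\,d\rho\bigr)=z_1^2(0,t)-(1-\dot\tau_1(t))z_1^2(1,t)$. Integrating from $0$ to $T$, recalling that $z_1(\rho,0)=f_0(-\tau_1(0)\rho)$ and dropping the non-positive term $-\int_0^T z_1^2(0,t)\,dt$, gives $\tau_1(0)\|f_0(-\tau_1(0)\cdot)\|^2_{L^2(0,1)}\le \tau_1(T)\|z_1(\cdot,T)\|^2_{L^2(0,1)}+\int_0^T(1-\dot\tau_1(t))z_1^2(1,t)\,dt$; the constant $C_1(d_1,M_1)$ is then extracted from $\tau_1(0)\ge\tau_{01}$, $\tau_1(T)\le M_1$, and the uniform bound on $1-\dot\tau_1(t)$ provided by $\tau_1\in W^{2,\infty}$ together with $\dot\tau_1\le d_1<1$; the arguments for $g_0$ and $h_0$ are identical. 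The main obstacle I anticipate lies in \eqref{eq:Kato3}: one has to perform the integration by parts for the fourth-order beam equation while tracking many boundary contributions without sign errors, and choose the weights in Young's inequality so that the resulting constants match precisely those stated; the other three estimates are essentially consequences of \eqref{eq:EnDiss3} and the transport identity for $z_i$.
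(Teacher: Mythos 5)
Your proposal is correct and follows essentially the same route as the paper: \eqref{eq:Kato1} and \eqref{eq:Katotr0} come from integrating the dissipation identity \eqref{eq:EnDiss3} and using the negative definiteness of the matrices $\Phi^{\dot\tau_i(t)}_{\alpha_i,\beta_i}$, while your fundamental-theorem-of-calculus plus averaging argument for \eqref{eq:Kato3} is, after an integration by parts in time, identical to the paper's multiplication of the equations by $(T-t)u_t$, $(T-t)v_t$, $(T-t)w_t$. The only difference is that for \eqref{eq:Kato4} you actually carry out the transport identity $\frac{d}{dt}\bigl(\tau_i(t)\int_0^1 z_i^2\,d\rho\bigr)=z_i^2(0,t)-(1-\dot\tau_i(t))z_i^2(1,t)$, whereas the paper omits this step and refers to the literature.
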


\begin{proof}
Using \eqref{eq:EnDiss3} and the fact that $\Phi^{\dot\tau_i(t)}_{\alpha_i,\beta_i}$ are symmetric negative definite matrices, we deduce the existence of a positive constant $C$, such that
\begin{align*}
E^{\prime}(t)
 &+a_1(t)\Vert u_t\Vert^2 +a_2(t)\Vert v_t\Vert^2 +a_3(t)\Vert w_t\Vert^2 =
 \dfrac{1}{2} \begin{pmatrix}
u_t( L, t) \\z_1(1, t)
\end{pmatrix}^{{\top}}
\Phi^{\dot\tau_1(t)}_{\alpha_1,\beta_1}
\begin{pmatrix}
u_{t}(L, t) \\ z_1(1, t)
\end{pmatrix}\nonumber \\
 &+
 \dfrac{1}{2} \begin{pmatrix}
v_t( L, t) \\ z_2(1, t)
\end{pmatrix}^{{\top}}
\Phi^{\dot\tau_2(t)}_{\alpha_2,\beta_2}
\begin{pmatrix}
v_{t}(L, t) \\ z_2(1, t)
\end{pmatrix} +
 \dfrac{1}{2} \begin{pmatrix}
w_{tx}( L, t) \\ z_3(1, t)
\end{pmatrix}^{{\top}}
\Phi^{\dot\tau_3(t)}_{\alpha_3,\beta_3}
\begin{pmatrix}
w_{tx}(L, t) \\ z_3(1, t)
\end{pmatrix} \nonumber\\
& \leq -C \left(u^2_t( L, t)+v^2_t( L, t)+w^2_{tx}( L, t) +\sum_{i=1}^3z_i^2(1, t)\right).
\end{align*}Thus, it follows from the above estimate that
 \begin{align} \label{se_limi_ener_2}
   E^{\prime}(t) +u^2_t( L, t)+v^2_t( L, t)+w^2_{tx}( L, t) +\sum_{i=1}^3z_i^2(1, t) \leq 0
 \end{align} Integrating \eqref{se_limi_ener_2} in $[0,s]$, for $0\leq s \leq T$, we get
	\begin{equation*}
		E(s) + \int_0^s \left(u^2_t( L, t)+v^2_t( L, t)+w^2_{tx}( L, t)\right)\,dt + \sum_{i=1}^3\int_0^s z_i^2(1, t)\,dt \leq E(0),
	\end{equation*}
	and \eqref{eq:Kato1} is obtained. Taking $s=T$ and since $E(t)$ is a non-increasing function, thanks to the Proposition \ref{pr:Diss}, estimate \eqref{eq:Katotr0} holds.

    Now,  we multiply the first equation of the system \eqref{Rao_1_12}  by $(T-t)u_t$, the second one by $(T-t)v_t$, third one by $(T-t)w_t$ and integrates by parts on $(0, L)\times (0, T)$ we obtain
\begin{equation}\label{de_for_init_v}
\begin{split}
\dfrac{T}{2}\Vert U(0) \Vert^2_{\mathcal{H}} =& \dfrac{1}{2}\int^T_0\Vert U(t) \Vert^2_{\mathcal{H}}\, dt + \int^T_0(T-t)\left(a_1(t)\Vert u_t\Vert^2 +a_2(t)\Vert v_t\Vert^2 +a_3(t)\Vert w_t\Vert^2 \right) dt \\
& -E_1h_1 \int^T_0(T-t)u_x(L,t)u_t(L, t)dt -E_3h_3 \int^T_0(T-t)v_x(L,t)v_t(L, t)dt\\
&-E I \int^T_0(T-t)w_{xx}(L,t)w_{tx}(L, t)dt.
\end{split}
\end{equation}
Since
\begin{align*}
-E_1h_1 \int^T_0(T-t)u_x(L,t)u_t(L, t)dt \leq & TE_1h_1\left( \alpha_1  + \dfrac{|\beta_1|}{2}\right)\lVert u_t(L, \cdot) \rVert_{L^2(0,T)}^2 \\
&+ \dfrac{T}{2}|\beta_1|\lVert z_1(1, \cdot) \rVert_{L^2(0,T)}^2,
\end{align*}
\begin{align*}
-E_3h_3 \int^T_0(T-t)v_x(L,t)v_t(L, t)dt \leq & TE_3h_3\left( \alpha_2  + \dfrac{|\beta_2|}{2}\right)\lVert v_t(L, \cdot) \rVert_{L^2(0,T)}^2 \\
&+ \dfrac{T}{2}|\beta_2|\lVert z_2(1, \cdot) \rVert_{L^2(0,T)}^2,
\end{align*}and
\begin{align*}
 -E I \int^T_0(T-t)w_{xx}(L,t)w_{tx}(L, t)dt    \leq & TE I\left( \alpha_3  + \dfrac{|\beta_3|}{2}\right)\lVert w_{tx}(L, \cdot) \rVert_{L^2(0,T)}^2 \\
&+ \dfrac{T}{2}|\beta_3|\lVert z_3(1, \cdot) \rVert_{L^2(0,T)}^2,
\end{align*}
thanks to the equality \eqref{de_for_init_v} and inequalities above, we have that \eqref{eq:Kato3}. The proof of estimate \eqref{eq:Kato4} follows analogously to that in \cite{Boumediene2024}, and the details are therefore omitted. Hence, the proof of Proposition \ref{pr:Kato} is complete.
\end{proof}

\section{Stabilization result}\label{sec3}
Let us prove the first main result of our work, that is, the system \eqref{Rao_1_12} decays exponentially, giving the answers to the Problem $P_2$.
\begin{proof}[Proof of Theorem \ref{th:Lyapunov0}] Consider the following Lyapunov functional $$\mathcal{L}(t) = \mu E(t) +\sum_{i=0}^3\mu_i \mathcal{L}_i(t),$$ where $\mu_i \in \mathbb{R}^+$ will be chosen later. Here, $E(t)$ is the total energy given by \eqref{eq:En},   while
$$
\mathcal{L}_0(t) = \rho_1 h_1\int_0^L u u_t\,dx + \rho_3 h_3\int_0^L v v_t\,dx + \rho h\int_0^L w w_t\,dx $$
and $$ \mathcal{L}_i(t) = \dfrac{\lvert\beta_i\rvert}{2} \tau_i(t) \int_0^1 (1-\rho) z_i^2(\rho, t)\,d\rho, \,\,  i=1, 2, 3.$$

\noindent Observe that using Poincar\'e inequality, we get
\begin{align*}
| \mathcal{L}_0(t)|   \leq &\dfrac{1}{2}\left(\rho_1h_1\|u_t\|^2+\rho_1 h_1\|u\|^2+\rho_3h_3\|v_t\|^2+\rho_3h_3\left\|v\right\|^2+\rho h \left\|w_t\right\|^2 +\rho h\left\|w\right\|^2\right)\\
\leq &\dfrac{1}{2}\left(\rho_1h_1\|u_t\|^2+\dfrac{\rho_1L^2}{E_1\pi^2}E_1h_1\|u_x\|^2+\rho_3h_3\|v_t\|^2+\dfrac{\rho_3L^2}{E_3\pi^2}E_3h_3\left\|v_x\right\|^2+\rho h \left\|w_t\right\|^2 \right.\\
&\left.  +\dfrac{\rho h L^4}{E I \pi^4}EI\left\|w_{xx}\right\|^2 +k\left\|-u+v+\alpha w_{x}\right\|^2 \right).
\end{align*}Since
\begin{align*}
|\mathcal{L}_i(t)| \leq \dfrac{\lvert\beta_i\rvert}{2} \tau_i(t) \Vert z_i^2(\cdot, t)\Vert^2,
\end{align*}we obtain that
\begin{align}\label{equinerlia}
(\mu-\mu_4)E(t) \leq \mathcal{L}(t)\leq (\mu+\mu_4)E(t).
\end{align}
Here,
\begin{equation}\label{mu4}
\mu_4 = \max \left\lbrace  \mu_0, \dfrac{\mu_0\rho_1 L^2}{E_1 \pi^2}, \dfrac{\mu_0\rho_3 L^2}{E_3 \pi^2}, \dfrac{\mu_0\rho_1 L^2}{E_1 \pi^2}, \dfrac{\mu_0\rho h L^4}{E I  \pi^4}, \mu_1, \mu_2, \mu_3 \right\rbrace,
\end{equation}
and, initially,  we will take $\mu_i>0$, $i=0, 1, 2, 3$, small enough such that $\mu_4<\mu$.

On the other hand, using the system \eqref{Rao_1_12} and the boundary condition, we get that
\begin{equation}\label{de_V_1_im}
\begin{split}
\mathcal{L}^{\prime}_0(t)=&\rho_1 h_1\int_0^L u u_{tt}\,dx + \rho_3 h_3\int_0^L v v_{tt}\,dx + \rho h\int_0^L w w_{tt}\,dx \\
&+ \rho_1 h_1\int_0^L  u^2_t\,dx + \rho_3 h_3\int_0^L  v^2_t\,dx + \rho h\int_0^L  w^2_t\,dx \\
=& \rho_1h_1\|u_t\|^2-E_1h_1\|u_x\|^2+\rho_3h_3\|v_t\|^2-E_3h_3\left\|v_x\right\|^2+\rho h \left\|w_t\right\|^2 \\
&   -EI\left\|w_{xx}\right\|^2-k\left\|-u+v+\alpha w_{x}\right\|^2 + E_1h_1u_x(L,t)u(L,t) \\
& +E_3h_3v_x(L,t)v(L,t)+E I w_{xx}(L,t)w_x(L,t)\\
& - a_1(t)\int_0^L u u_{t}\,dx - a_2(t)\int_0^L v v_{t}\,dx - a_3(t)\int_0^L w w_{t}\,dx.
\end{split}
\end{equation}
Applying Young's inequality, the Sobolev embedding, and using the boundary conditions of the system \eqref{Rao_1_12}, the following estimate holds
\begin{equation}\label{1_boun_u}
\begin{split}
\left\vert E_1h_1u_x(L,t)u(L,t) \right\vert &\leq E_1h_1 \epsilon_1\Vert u_x \Vert^2 + C_{\epsilon_1, E_1, h_1}|u_x(L,t)|^2 \\
& \leq E_1h_1 \epsilon_1\Vert u_x \Vert^2 + C_{\epsilon_1, E_1, h_1}\begin{pmatrix}
		u_{t}(L,t) \\ z_{1}(1, t)
	\end{pmatrix}^{{\top}}
	\begin{pmatrix}
		\alpha_1^2 & \alpha_1\beta_1 \\
		\alpha_1\beta_1 & \beta_1^2
	\end{pmatrix}
	\begin{pmatrix}
		u_{t}(L,t) \\ z_{1}(1, t)
	\end{pmatrix},
\end{split}
\end{equation}
for any $\epsilon_1>0$ and some positive constant $C_{\epsilon_1, E_1, h_1}$.

\noindent Similarly, for any $\epsilon_2, \epsilon_3 >0$, we deduce that
\begin{align}\label{2_boun_v}
\left\vert E_3h_3 v_x(L,t)v(L,t) \right\vert  \leq E_3h_3 \epsilon_2\Vert v_x \Vert^2 + C_{\epsilon_2, E_3, h_3}\begin{pmatrix}
		v_{t}(L,t) \\ z_{2}(1, t)
	\end{pmatrix}^{{\top}}
	\begin{pmatrix}
		\alpha_2^2 & \alpha_2\beta_2 \\
		\alpha_2\beta_2 & \beta_2^2
	\end{pmatrix}
	\begin{pmatrix}
		v_{t}(L,t) \\ z_{2}(1, t)
	\end{pmatrix},
\end{align}and
\begin{align}\label{3_boun_w}
\left\vert E I w_{xx}(L,t)w_x(L,t) \right\vert \leq E I \epsilon_3\Vert w_{xx} \Vert^2 + C_{\epsilon_3, E, I} \begin{pmatrix}
		w_{tx}(L,t) \\ z_{3}(1, t)
	\end{pmatrix}^{{\top}}
	\begin{pmatrix}
		\alpha_3^2 & \alpha_3\beta_3 \\
		\alpha_3\beta_3 & \beta_3^2
	\end{pmatrix}
	\begin{pmatrix}
		w_{tx}(L,t) \\ z_{3}(1, t)
	\end{pmatrix}.
\end{align}

Additionally to that, thanks \eqref{eq:tr_z1}-\eqref{eq:tr_z3}, and using  integration by parts, we obtain that
\begin{equation}\label{deri_L1}
	\mathcal{L}_1^{\prime}(t)  = -\dfrac{\lvert\beta_1\rvert}{2}\int_0^1(1-\dot\tau_1(t)\rho)z_1^2(\rho, t) d\rho + \dfrac{\lvert\beta_1\rvert}{2}u_t^2(L,t),
\end{equation}
\begin{equation}\label{deri_L2}
	\mathcal{L}_2^{\prime}(t)  = -\dfrac{\lvert\beta_2\rvert}{2}\int_0^1(1-\dot\tau_2(t)\rho)z_2^2(\rho, t) d\rho + \dfrac{\lvert\beta_1\rvert}{2}v_t^2(L,t),
\end{equation}and
\begin{equation}\label{deri_L3}
	\mathcal{L}_3^{\prime}(t)  = -\dfrac{\lvert\beta_3\rvert}{2}\int_0^1(1-\dot\tau_3(t)\rho)z_3^2(\rho, t) d\rho + \dfrac{\lvert\beta_3\rvert}{2}w_{tx}^2(L,t).
\end{equation}
Moreover, using \eqref{damping_123}, and from Young's and Poincaré's inequalities, it follows that
\begin{align}\label{a_1mulvar}
- a_1(t)\int_0^L u u_{t}\,dx &\leq \widehat{\epsilon}_1 b_{01}^2\Vert u \Vert^2+C_{\widehat{\epsilon}_1}\Vert u_t \Vert^2 \\
& \leq \widehat{\epsilon}_1 b_{01}^2\frac{L^2}{\pi^2}\Vert u_x \Vert^2+C_{\widehat{\epsilon}_1}\Vert u_t \Vert^2,\nonumber
\end{align}
\begin{align}\label{a_2mulpsi}
 - a_2(t)\int_0^L v v_{t}\,dx \leq  \widehat{\epsilon}_2 b_{02}^2\frac{L^2}{\pi^2}\Vert v_x \Vert^2+C_{\widehat{\epsilon}_2}\Vert v_t \Vert^2,
\end{align}and
\begin{align}\label{a_3mulw}
 - a_3(t)\int_0^L w w_{t}\,dx \leq  \widehat{\epsilon}_3 b_{03}^2\frac{L^4}{\pi^4}\Vert w_{xx} \Vert^2+C_{\widehat{\epsilon}_3}\Vert w_t \Vert^2
\end{align}for any $\widehat{\epsilon}_i>0$ and some constants $C_{\widehat{\epsilon}_i}>0$, $i=1, 2, 3$.

Thus, using \eqref{damping_123}, \eqref{eq:EnDiss3} and from \eqref{de_V_1_im}-\eqref{a_3mulw}, it follows that
\begin{equation}\label{fullequiLEder}
\begin{split}
\mathcal{L}^{\prime}(t) =& \mu E^{\prime}(t) +\sum_{i=0}^3\mu_i \mathcal{L}^{\prime}_i(t)\\
\leq &\left[-\left(\dfrac{\mu a_{01}}{\rho_1 h_1}-\mu_0\left(1+\frac{C_{\widehat{\epsilon}_1}}{\rho_1 h_1}\right)\right)\rho_1h_1\|u_t\|^2-\mu_0\left(1-\left(\epsilon_1+\frac{\widehat{\epsilon}_1 b_{01}^2L^2}{E_1h_1\pi^2}\right)\right)E_1h_1\|u_x\|^2\right.\\
&-\left(\dfrac{\mu a_{02}}{\rho_3 h_3}-\mu_0\left(1+\frac{C_{\widehat{\epsilon}_2}}{\rho_3 h_3}\right)\right)\rho_3h_3\|v_t\|^2 -\mu_0\left(1-\left(\epsilon_2+\frac{\widehat{\epsilon}_2 b_{02}^2L^2}{E_3h_3\pi^2}\right)\right)E_3h_3\|v_x\|^2 \\
& \left. -\left(\dfrac{\mu a_{03}}{\rho h}-\mu_0\left(1+\frac{C_{\widehat{\epsilon}_3}}{\rho h}\right)\right)\rho h \left\|w_t\right\|^2  -\mu_0\left(1-\left(\epsilon_3+\frac{\widehat{\epsilon}_3 b_{03}^2L^4}{E I\pi^4}\right)\right)E I\left\|w_{xx}\right\|^2 \right. \\
&\left.-\mu_0k\left\|-u+v+\alpha w_{x}\right\|^2 - \mu_0 \sum_{i=1}^3\dfrac{\lvert\beta_i\rvert}{2} \tau_i(t) \Vert z_i^2(\cdot, t)\Vert_{L^2(0,1)}^2 \right] +\sum_{i=1}^3W_i + \sum_{i=1}^3S_i
\end{split}
\end{equation}
where
\begin{equation*}
\begin{cases}
W_1=	\dfrac{1}{2} \left\langle \Pi_{\mu_0,\mu_1} (u_{t}(L,t), z_{1}(1, t)), (u_{t}(L,t), z_{1}(1, t)) \right\rangle,\\
\Pi_{\mu_0,\mu_1} =\mu\Phi^{\dot\tau_1(t)}_{\alpha_1,\beta_1}
	+ \mu_0 C_{\epsilon_1, E_1, h_1}\begin{pmatrix}
		\alpha_1^2 & \alpha_1\beta_1 \\
		\alpha_1\beta_1 & \beta_1^2
	\end{pmatrix}
	+ \mu_1\lvert\beta_1\rvert\begin{pmatrix}
		1 & 0 \\ 0 & 0
	\end{pmatrix},
\end{cases}
\end{equation*}
\begin{equation*}
\begin{cases}
W_2=	\dfrac{1}{2} \left\langle \Pi_{\mu_0,\mu_2} (v_{t}(L,t), z_{2}(1, t)), (v_{t}(L,t), z_{2}(1, t)) \right\rangle,\\
\Pi_{\mu_0,\mu_2} =\mu\Phi^{\dot\tau_2(t)}_{\alpha_2,\beta_2}
	+ \mu_0 C_{\epsilon_2, E_3, h_3}\begin{pmatrix}
		\alpha_2^2 & \alpha_2\beta_2 \\
		\alpha_2\beta_2 & \beta_2^2
	\end{pmatrix}
	+ \mu_2\lvert\beta_2\rvert\begin{pmatrix}
		1 & 0 \\ 0 & 0
	\end{pmatrix},
\end{cases}
\end{equation*}
\begin{equation*}
\begin{cases}
W_3=	\dfrac{1}{2} \left\langle \Pi_{\mu_0,\mu_3} (w_{tx}(L,t), z_{3}(1, t)), (w_{tx}(L,t), z_{3}(1, t)) \right\rangle,\\
\Pi_{\mu_0,\mu_3} =\mu\Phi^{\dot\tau_3(t)}_{\alpha_3,\beta_3}
	+ \mu_0 C_{\epsilon_3, E,  I}\begin{pmatrix}
		\alpha_3^2 & \alpha_3\beta_3 \\
		\alpha_3\beta_3 & \beta_3^2
	\end{pmatrix}
	+ \mu_3\lvert\beta_3\rvert\begin{pmatrix}
		1 & 0 \\ 0 & 0
	\end{pmatrix},
\end{cases}
\end{equation*}and
\begin{equation*}
S_i = -\mu_i\dfrac{\lvert\beta_i\rvert}{2}\int_0^1(1-\dot\tau_i(t)\rho)z_i^2(\rho, t) d\rho + \mu_0\dfrac{\lvert\beta_i\rvert}{2} \tau_i(t) \Vert z_i^2(\cdot, t)\Vert_{L^2(0,1)}^2, \,\,  i=1, 2, 3.
\end{equation*}

Our task now is to prove that
$$\mathcal{L}^{\prime}(t)\leq -\lambda E(t),$$
for some positive constant $\lambda$. To do so, let us analyze each term $W_i$ and  $S_i$  in \eqref{fullequiLEder}, for $i=1,2,3$.

\vspace{0.2cm}

\noindent\textbf{Estimate for $W_i$:} Since the matrices $\Phi^{\dot{\tau}_i(t)}_{\alpha_i,\beta_i}$ are negative definite, it follows from the continuity of the trace and determinant functions that one can choose $\mu_0, \mu_i \in (0,1)$ sufficiently small so that the perturbed matrix $\Pi_{\mu_0,\mu_i}$ remains negative definite. Thus,
\begin{align}\label{W_inegativeforesta}
  W_i \leq0, \, \text{for } i=1,2,3.
\end{align}
\vspace{0.2cm}
\noindent\textbf{Estimate for $S_i$:} By using \eqref{eq:TauCond}, we get
\begin{equation*}
	\begin{aligned}
		S_i\leq& -\dfrac{\mu_i\lvert \beta_i \rvert }{2}(1-d_i)\Vert z_i^2(\cdot, t)\Vert_{L^2(0,1)}^2+ \mu_0\dfrac{\lvert\beta_i\rvert}{2} M_i \Vert z_i^2(\cdot, t)\Vert_{L^2(0,1)}^2\\
		\leq& -\left ( \mu_i(1-d_i)-\mu_0 M_i\right)\dfrac{\lvert\beta_i\rvert}{2}\Vert z_i^2(\cdot, t)\Vert_{L^2(0,1)}^2.
	\end{aligned}
\end{equation*}
So, choosing
\begin{equation*}
	 \dfrac{\mu_0 M_i}{(1-d_i)}<\mu_i,
\end{equation*}
we have that
\begin{align}\label{S_inegativeforestabi}
S_i<0, \, \, i=1, 2, 3.
\end{align}
Thus, let us choose $\epsilon_i, \widehat{\epsilon}_i>0$, $i=1,2, 3$,  $\mu_0>0$ sufficiently small and $\mu>0$ such that
\begin{align*}
\max \left\{ \epsilon_1+\frac{\widehat{\epsilon}_1 b_{01}^2L^2}{E_1h_1\pi^2}, \epsilon_2+\frac{\widehat{\epsilon}_2 b_{02}^2L^2}{E_3h_3\pi^2}, \epsilon_3+\frac{\widehat{\epsilon}_3 b_{03}^2L^4}{E I\pi^4} \right\} < 1,
\end{align*}
\begin{align*}
  \dfrac{\mu_0 M_i}{1-d_i} < \mu_i, \quad i=1,2,3,
\end{align*}and
\begin{align*}
 \max\left\{ \mu_4, \mu_5\right\}< \mu,
\end{align*}where
\begin{align*}
\mu_5 = \max\left\{ \frac{\mu_0}{a_{01}}(\rho_1h_1+C_{\widehat{\epsilon}_1}), \frac{\mu_0}{a_{02}}(\rho_3h_3+C_{\widehat{\epsilon}_2}),\frac{\mu_0}{a_{03}}(\rho h+C_{\widehat{\epsilon}_3})\right\},
\end{align*}and $\mu_4$ is given by \eqref{mu4}.

Next, consider $\lambda > 0$ satisfying
\begin{align*}
\lambda \le \min &\left\{ 2\left(\dfrac{\mu a_{01}}{\rho_1 h_1}-\mu_0\left(1+\frac{C_{\widehat{\epsilon}_1}}{\rho_1 h_1}\right)\right), 2\left(\dfrac{\mu a_{02}}{\rho_3 h_3}-\mu_0\left(1+\frac{C_{\widehat{\epsilon}_3}}{\rho_3 h_3}\right)\right), 2\left(\dfrac{\mu a_{03}}{\rho h}-\mu_0\left(1+\frac{C_{\widehat{\epsilon}_3}}{\rho h}\right)\right), \right. \\
&\left. 2\mu_0\left(1-\left(\epsilon_1+\frac{\widehat{\epsilon}_1 b_{01}^2L^2}{E_1h_1\pi^2}\right)\right), 2\mu_0\left(1-\left(\epsilon_2+\frac{\widehat{\epsilon}_2 b_{02}^2L^2}{E_3h_3\pi^2}\right)\right), 2\mu_0\left(1-\left(\epsilon_3+\frac{\widehat{\epsilon}_3 b_{03}^2L^4}{E I \pi^4}\right)\right)\right\}.
\end{align*}
Then, from \eqref{fullequiLEder}--\eqref{S_inegativeforestabi} and using inequality \eqref{equinerlia}, it follows that $\lambda>0$ satisfies
\begin{align*}
\mathcal{L}'(t) \le -\lambda E(t) \le -\dfrac{\lambda}{\mu_*} \mathcal{L}(t),
\end{align*}
where $\mu_*=\mu+\mu_4$, and consequently,
\begin{align}\label{ener_equi_lia_decay}
\mathcal{L}(t) \le e^{-\dfrac{\lambda}{\mu_*}t} \mathcal{L}(0).
\end{align}

Finally, combining \eqref{equinerlia} and \eqref{ener_equi_lia_decay}, there exist positive constants $\lambda$, $\zeta = \frac{\mu+\mu_4}{\mu-\mu_4}$ and $\mu_*=\mu+\mu_4$ such that
\[
E(t) \le \zeta \, e^{-\dfrac{\lambda}{\mu_*} t} E(0), \quad \text{for all } t \ge 0.
\]
This completes the proof of Theorem \ref{th:Lyapunov0}.
\end{proof}


\section{Well-posedness for the control system}\label{sec4}
In this section, we establish the well-posedness of system \eqref{2bbm}.  We consider both the homogeneous case, where the boundary inputs $f_1$, $f_2$, and $f_3$ vanish, and the nonhomogeneous case, in which these three controls are present.

\subsection{The homogeneous system}
We begin by considering the following homogeneous system:
\begin{equation}\label{2bbm-hom}
\left\{\begin{array}{ll}
\rho_{1 } h_1 u_{tt}-E_1h_1u_{xx}-k\left(  -u+v+\alpha w_{x}\right)   =0, &x\in(0,L),\,\,\,
t>0,\\
\rho_{3} h_3 v_{tt}-E_3 h_3 v_{xx}+k\left( -u+v+\alpha w_{x}\right)
=0,  &x\in(0,L),\,\,\,
t>0,\\
\rho h w_{tt}+EI w_{xxxx}-\alpha k \left( -u+v+\alpha w_{x}\right)_x
=0, & x\in(0,L),\,\,\, t>0,\\
u(0,t)=v(0,t)=w_x(0, t)=w_{xxx}(0,t) =0, & t>0,\\
w_{xx}(L,t)=w_{xxx}(L,t) =0, & t>0,\\
u_{tt}(L, t)+u_{x}(L, t)= 0, &
t>0,\\
v_{tt}(L, t) + v_x(L, t) =0, &
t>0,\\
w_{tt}(L, t)-u(L,t)+v(L, t)+\alpha w_x(L,t)=0, &
t>0,\\
\left(  u, v, w \right)  \left( x,0 \right)  =\left(
u_{0}, v_{0}, w_{0}\right) ( x ), & x\in(0,L),\\
\left( u_{t}, v_{t}, w_{t}\right)  \left(  x,0\right)  =\left( u_{1}, v_{1}, w_{1}\right)  \left(  x\right), & x\in(0,L).
\end{array}\right.\end{equation}
Observe that, by a formal computation, we find that a solution of the system \eqref{2bbm-hom} satisfies the following identity
\begin{equation*}
\begin{aligned}
\dfrac{1}{2} \dfrac{d}{d t} & \bigg[\rho_1 h_1\left\|u_t\right\|_{L^2}^2+E_1 h_1\left\|u_x\right\|_{L^2}^2+\rho_3 h_3\left\|v_t\right\|_{L^2}^2+E_3 h_3\left\|v_x\right\|_{L^2}^2+\rho  h\left\|w_t\right\|_{L^2}^2+E I\left\|w_{x x}\right\|_{L^2}^2
\\ &
+k \|-u+ v+\alpha w_x \|_{L^2}^2  +E_1 h_1\left|u_t(L)\right|^2+E_3 h_3\left|v_t(L)\right|^2+\alpha k\left|w_t(L)\right|^2\bigg]=0.
\end{aligned}
\end{equation*}
With this in mind, we introduce the new variables
$$
\Psi_1=u_{t},\ \ \ \Psi_2=v_{t},\ \ \ \Psi_3=w_{t},\ \ \ \Psi_4(\cdot)=\Psi_1(L, \cdot), \ \ \ \Psi_5(\cdot)=\Psi_2(L, \cdot), \ \ \ \Psi_6(\cdot)=\Psi_3(L, \cdot),
$$
and define the vector functions
$$
U=(u,\Psi_1, v,\Psi_2,w,\Psi_3, \Psi_4, \Psi_5, \Psi_6)^{\top}\text{ \ and \ }U_{0}=\left(  u_{0}%
,u_{1},v_{0},v_{1},w_{0},w_{1}, \Psi_4(0), \Psi_5(0), \Psi_6(0)\right)  ^{\top}.%
$$
Hence, the system \eqref{2bbm-hom} can be reformulated as an abstract Cauchy problem
\begin{equation}\label{abs}
\left\{
\begin{array}
[c]{l}
U_t = \mathcal{P} U\\
U(0) = U_0,
\end{array}\right.
\end{equation} where the operator $\mathcal{P}$ is given by
\begin{equation}\label{opae2}
\mathcal{P}U:=\begin{pmatrix}
\Psi_1 \\
\dfrac{1}{\rho_1 h_1} \left[ E_1 h_1 u_{xx} + k(-u + v + \alpha w_x) \right] \\
\Psi_2 \\
\dfrac{1}{\rho_3 h_3} \left[ E_3 h_3 v_{xx} - k(-u + v + \alpha w_x) \right] \\
\Psi_3 \\
\dfrac{1}{\rho h} \left[ -EI w_{xxxx} + \alpha k (-u + v + \alpha w_x)_x \right] \\
- u_x(L,\cdot) \\
- v_x(L,\cdot) \\
- \left[ -u(L,\cdot) + v(L,\cdot) + \alpha w_x(L,\cdot) \right]
\end{pmatrix}.
\end{equation}

Furthermore, the domain of $\mathcal{P}: D\left(\mathcal{P}\right) \subset \mathcal{H}_2 \rightarrow \mathcal{H}_2$ is defined by
\begin{equation}\label{DAA-1}
D(\mathcal{P})=
\left\lbrace
	\begin{aligned}
		&U=(u,\Psi_1, v,\Psi_2,w,\Psi_3, \Psi_4, \Psi_5, \Psi_6)^{\top}\in \mathcal{H}_2; \\ & (u,v,w)\in \left[ H^2(0,L)\cap H_*^1(0,L) \right]^2\times H^4_{*}(0,L),  \\
	& (\Psi_1, \Psi_2,\Psi_3)\in \left[  H_*^1(0,L) \right]^2\times H_*^2(0,L),
    \\&\Psi_4=\Psi_1(L), \ \Psi_5=\Psi_2(L),  \ \Psi_6=\Psi_3(L)
	\end{aligned}
\right\rbrace.
\end{equation}

Now, let us denote by  $\rho(\mathcal{P})$ the resolvent set of the operator $\mathcal{P}$. Then, we have the following result.
\begin{lemma}\label{0inresolv}
  Let $\mathcal{H}_2$ and $\left(
D\left(\mathcal{P}\right), \mathcal{P}\right)$ be defined as before. Then, $0 \in \rho(\mathcal{P})$. Moreover, $\mathcal{P}^{-1}$ is compact.
\end{lemma}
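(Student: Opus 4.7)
The approach is to show that for every $F = (f_1,\ldots,f_9)^\top \in \mathcal{H}_2$, the equation $\mathcal{P}U = F$ admits a unique solution $U \in D(\mathcal{P})$ depending continuously on $F$, and then to deduce compactness of $\mathcal{P}^{-1}$ from the Rellich--Kondrachov theorem. Comparing componentwise with the definition \eqref{opae2} of $\mathcal{P}$, one immediately reads off $\Psi_1 = f_1$, $\Psi_2 = f_3$, $\Psi_3 = f_5$, and the compatibilities $\Psi_4 = f_1(L)$, $\Psi_5 = f_3(L)$, $\Psi_6 = f_5(L)$ are automatic once $(u,v,w)$ is constructed with the correct boundary behavior. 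The problem thus reduces to the stationary elliptic system
\begin{equation*}
\begin{cases}
E_1 h_1 u_{xx} + k(-u+v+\alpha w_x) = \rho_1 h_1 f_2,\\
E_3 h_3 v_{xx} - k(-u+v+\alpha w_x) = \rho_3 h_3 f_4,\\
-EI\, w_{xxxx} + \alpha k(-u+v+\alpha w_x)_x = \rho h f_6,
\end{cases}
\end{equation*}
on $(0,L)$, subject to $u(0) = v(0) = w_x(0) = w_{xxx}(0) = w_{xx}(L) = w_{xxx}(L) = 0$, the mean-zero condition on $w$, and the mixed boundary data $u_x(L) = -f_7$, $v_x(L) = -f_8$, $u(L)-v(L)-\alpha w_x(L) = f_9$.

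I would solve this via the Lax--Milgram lemma on $V := H_*^1(0,L) \times H_*^1(0,L) \times H_*^2(0,L)$. Multiplying the three equations by $u^\sharp$, $-v^\sharp$, $-w^\sharp$ and integrating by parts, the vanishing of $w_{xxx}$ at both endpoints, $w_{xx}(L)=0$ and $w_x^\sharp(0)=0$ cancel all boundary contributions from the fourth-order term except those arising from the condition at $x=L$ through $f_9$. The resulting bilinear form is
\begin{equation*}
a((u,v,w),(u^\sharp,v^\sharp,w^\sharp)) = E_1 h_1\langle u_x,u_x^\sharp\rangle + E_3 h_3 \langle v_x,v_x^\sharp\rangle + EI \langle w_{xx},w_{xx}^\sharp\rangle + k\langle -u+v+\alpha w_x,\,-u^\sharp+v^\sharp+\alpha w_x^\sharp\rangle,
\end{equation*}
paired with a continuous linear functional on $V$ that collects the contributions of $f_2,f_4,f_6$ (as $L^2$-duality pairings) and of $f_7,f_8,f_9$ (as boundary trace terms at $x=L$). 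Continuity and symmetry of $a$ are transparent, and coercivity follows from Poincaré's inequality for $u$ and $v$ (both vanishing at $x=0$) together with Poincaré--Wirtinger plus Poincaré applied to $w_x$ on $H_*^2$ (since $\int_0^L w = 0$ and $w_x(0) = 0$), yielding that $\|w_{xx}\|_{L^2}$ controls the full $H^2$-norm.

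Lax--Milgram then produces a unique $(u,v,w)\in V$ with $\|(u,v,w)\|_V \le C\|F\|_{\mathcal{H}_2}$. Classical elliptic regularity upgrades this to $u,v\in H^2(0,L)$ and $w\in H^4(0,L)$, and the natural boundary conditions at $x=L$ together with the homogeneous ones $w_{xxx}(0) = w_{xxx}(L) = w_{xx}(L) = 0$ are recovered by undoing the integrations by parts against test functions with suitable traces. These estimates give $\|U\|_{D(\mathcal{P})} \le C\|F\|_{\mathcal{H}_2}$, proving $\mathcal{P}^{-1}\in \mathcal{L}(\mathcal{H}_2)$ and hence $0\in\rho(\mathcal{P})$. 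Compactness is then immediate: $D(\mathcal{P})$ injects continuously into $[H^2\times H^1]^2 \times H^4 \times H^2 \times \mathbb{R}^3$, and on the bounded interval $(0,L)$ the embeddings $H^2\hookrightarrow H^1$, $H^2\hookrightarrow L^2$, $H^4\hookrightarrow H^2$ are compact by Rellich--Kondrachov, so $\mathcal{P}^{-1}$ factors through a compact embedding and is therefore compact.

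The main obstacle is the careful bookkeeping of boundary terms in the $w$-equation: since $w$ carries three boundary conditions $w_{xxx}(0) = w_{xxx}(L) = w_{xx}(L) = 0$ while the test space $H_*^2$ only builds in $w_x^\sharp(0) = 0$ and the mean-zero constraint, the integration by parts of $\int_0^L w_{xxxx} w^\sharp$ has to be organized so that every non-vanishing boundary contribution is absorbed either into $a$ or into the linear term proportional to $f_9 w^\sharp(L)$. An equally delicate point is the verification, after elliptic regularity, that the natural boundary conditions built into the weak formulation return exactly the prescribed traces $u_x(L) = -f_7$, $v_x(L) = -f_8$, and $u(L)-v(L)-\alpha w_x(L) = f_9$, with the coupling through the shear term $-u+v+\alpha w_x$ requiring one to choose test functions supported near $x=L$ with prescribed traces to isolate each contribution.
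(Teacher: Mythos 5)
Your proposal is correct and follows essentially the same route as the paper: reduce $\mathcal{P}U=F$ to the stationary coupled elliptic system, solve it by Lax--Milgram with exactly the bilinear form $\mathcal{B}$ used in the paper on $\left[H_*^1(0,L)\right]^2\times H_*^2(0,L)$, recover $U\in D(\mathcal{P})$ by elliptic regularity, and conclude compactness of $\mathcal{P}^{-1}$ from the compact embedding of $D(\mathcal{P})$ into $\mathcal{H}_2$. The only (inconsequential) slip is the multiplier choice $(u^\sharp,-v^\sharp,-w^\sharp)$: to assemble the symmetric coupling term $k\langle -u+v+\alpha w_x,\,-u^\sharp+v^\sharp+\alpha w_x^\sharp\rangle$ you should test the first equation with $-u^\sharp$ as well, which is what your final bilinear form implicitly assumes.
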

\begin{proof}
For $G=\left(g_1, g_2, g_3, g_4, g_5, g_6, g_7, g_8, g_9 \right) \in \mathcal{H}_2$, we show the existence of $U \in D\left(\mathcal{P}\right),$ unique solution of the equation
$$
\mathcal{P} U=G.
$$
Equivalently, one must consider the system given by
\begin{align}
& \Psi_1= g_1, \quad \Psi_2=g_3, \quad \Psi_3=g_5, \label{eq:system1_1}\\
& E_1h_1u_{xx}+k\left(-u+v+\alpha w_x\right)=\rho_1 h_1 g_2, \label{eq:system2_1} \\
& E_3h_3v_{xx}-k\left(-u+v+\alpha w_x\right)=\rho_3 h_3 g_4, \label{eq:system3_1}\\
& E I w_{xxxx}+\alpha k\left(-u+v+\alpha w_x\right)_x=\rho h g_6, \label{eq:system4_1}\\
& -u_x(L)= g_7,\label{eq:system5_1}\\
&-v_x(L)=g_8, \label{eq:system6_1}\\
&-[-u(L)+v(L)+\alpha w_x(L)]=g_9, \label{eq:system7_1}
\end{align}with the following boundary conditions
\begin{equation}\label{eq:system_cond1}
u(0)=v(0)=w_x(0)=w_{xxx}(0)=w_{xx}(L)=w_{xxx}(L)=0.
\end{equation}

Let $(u^{*},v^{*},w^{*}) \in \left[ H_*^1(0,L) \right]^2 \times H_*^2(0,L)$.  We multiply the equations \eqref{eq:system2_1}, \eqref{eq:system3_1}, and \eqref{eq:system4_1}  by $u^{*}$, $v^{*}$, and $w^{*}$, respectively, and integrate by parts over $(0,L)$.  Using \eqref{eq:system5_1}, \eqref{eq:system6_1}, \eqref{eq:system7_1} and the boundary conditions in \eqref{eq:system_cond1}, and then summing the resulting expressions, we obtain
\begin{equation} \label{eq:bilinear_def0}
\mathcal{B}((u,v,w),(u^{*},v^{*},w^{*})) = \mathcal{L}(u^{*},v^{*},w^{*}),
\quad \forall (u^{*},v^{*},w^{*}) \in \left[ H_*^1(0,L) \right]^2 \times H_*^2(0,L),
\end{equation}
where
\begin{equation} \label{eq:bilinear_def1}
\begin{aligned}
\mathcal{B}((u,v,w),(u^{*},v^{*},w^{*}))
&= E_1 h_1 \langle u_x, u_x^{*} \rangle + E_3 h_3 \langle v_x, v_x^{*} \rangle + EI \langle w_{xx}, w_{xx}^{*} \rangle \\
&\quad + k \langle -u+v+\alpha w_x, -u^{*}+v^{*}+\alpha w_x^{*} \rangle,
\end{aligned}
\end{equation}
and
\begin{align} \label{eq:bilinear_def2}
\mathcal{L}(u^{*},v^{*},w^{*})
&= -\rho_1 h_1 \langle g_2, u^{*} \rangle - \rho_3 h_3 \langle g_4, v^{*} \rangle - \rho h \langle g_6, w^{*} \rangle \\
&\quad - E_1 h_1 g_7 u^{*}(L) - E_3 h_3 g_8 v^{*}(L) - \alpha k g_9 w^{*}(L). \nonumber
\end{align}

From \eqref{eq:bilinear_def1} and \eqref{eq:bilinear_def2}, it follows that the bilinear form
\[
\mathcal{B}: \left[ \left[ H_*^1(0,L) \right]^2 \times H_*^2(0,L) \right]^2 \to \mathbb{C}
\]
is continuous and coercive, and the linear functional
\[
\mathcal{L}: \left[ H_*^1(0,L) \right]^2 \times H_*^2(0,L) \to \mathbb{C}
\]
is continuous as well. Therefore, by the Lax–Milgram theorem, there exists a unique weak solution
\[
(u,v,w) \in \left[ H_*^1(0,L) \right]^2 \times H_*^2(0,L)
\]
to the variational problem \eqref{eq:bilinear_def0}.

Furthermore, using \eqref{eq:system1_1} and standard elliptic regularity results, we conclude that $U \in D(\mathcal{P})$,
which shows that the operator $\mathcal{P}$ is bijective.
Following arguments similar to \cite[Proposition 2]{CQS}, one can also verify that $\mathcal{P}^{-1}$ is bounded,
implying that $0 \in \rho(\mathcal{P})$, the resolvent set of $\mathcal{P}$.
Finally, by the Sobolev embedding theorem, the natural inclusion
\[
i: D(\mathcal{P}) \hookrightarrow \mathcal{H}_2
\]
is compact, which ensures that $\mathcal{P}^{-1}$ is a compact operator.\end{proof}

The next result ensures that the operator $\mathcal{P}$ generates a group. Precisely, we have the following.

\begin{proposition}\label{teogeneinfiA}
The operator $\left(D\left(\mathcal{P}\right), \mathcal{P}\right)$ is infinitesimal generator of a group of isometries $\{S(t)\}_{t\in\mathbb{R}}$ in $\mathcal{H}_2$.
\end{proposition}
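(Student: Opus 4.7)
The plan is to invoke Stone's theorem (equivalently, to apply the Lumer--Phillips theorem to both $\mathcal{P}$ and $-\mathcal{P}$). Density of $D(\mathcal{P})$ in $\mathcal{H}_2$ is already implicit in Lemma \ref{0inresolv}, since the bijectivity of $\mathcal{P}:D(\mathcal{P})\to\mathcal{H}_2$ shown there guarantees that the range of $\mathcal{P}$ is the whole space. The two remaining ingredients are: (i) skew-symmetry, i.e., $\langle \mathcal{P}U, U\rangle_{\mathcal{H}_2} = 0$ for every $U \in D(\mathcal{P})$; and (ii) a maximality/range condition for both $\pm\mathcal{P}$.

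For (i), I would expand $\langle \mathcal{P}U, U\rangle_{\mathcal{H}_2}$ using the explicit form of $\mathcal{P}$ and of the inner product. The second-order terms on $u$ and $v$ are handled by a single integration by parts: the interior contributions cancel against $E_1 h_1 \langle \Psi_{1,x}, u_x\rangle$ and $E_3 h_3 \langle \Psi_{2,x}, v_x\rangle$, leaving the boundary residues $E_1 h_1 u_x(L)\Psi_1(L)$ and $E_3 h_3 v_x(L)\Psi_2(L)$ (the contributions at $x=0$ vanish since $u(0)=v(0)=\Psi_1(0)=\Psi_2(0)=0$). The fourth-order term $-EI\langle w_{xxxx}, \Psi_3\rangle$ requires two integrations by parts; all four boundary contributions vanish owing to $w_x(0)=w_{xxx}(0)=w_{xx}(L)=w_{xxx}(L)=0$ and $\Psi_{3,x}(0)=0$ (encoded in $H_*^2(0,L)$), so the term reduces to $-EI\langle w_{xx}, \Psi_{3,xx}\rangle$, which cancels with $EI\langle \Psi_{3,xx}, w_{xx}\rangle$. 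The cross-term $\alpha k\langle(-u+v+\alpha w_x)_x, \Psi_3\rangle$ produces the boundary residue $\alpha k(-u(L)+v(L)+\alpha w_x(L))\Psi_3(L)$ plus an interior piece which, together with the $k$-contributions coming from the $u$- and $v$-equations and the coupling term $k\langle-\Psi_1+\Psi_2+\alpha\Psi_{3,x}, -u+v+\alpha w_x\rangle$, algebraically vanishes. The three surviving boundary residues then exactly cancel the $\mathbb{R}^3$ contributions $E_1 h_1\Psi_4^{\mathcal{P}}\Psi_4+E_3 h_3\Psi_5^{\mathcal{P}}\Psi_5+\alpha k\Psi_6^{\mathcal{P}}\Psi_6$ once the identifications $\Psi_4=\Psi_1(L)$, $\Psi_5=\Psi_2(L)$, $\Psi_6=\Psi_3(L)$ (part of $D(\mathcal{P})$) are enforced. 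Hence $\langle \mathcal{P}U, U\rangle_{\mathcal{H}_2}=0$, showing that both $\mathcal{P}$ and $-\mathcal{P}$ are dissipative.

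For (ii), from Lemma \ref{0inresolv} we have $0 \in \rho(\mathcal{P})$ with $\mathcal{P}^{-1}$ compact; since $\rho(\mathcal{P})$ is open, a neighborhood of $0$ is contained in $\rho(\mathcal{P})$, so $\lambda I - \mathcal{P}$ and $\lambda I + \mathcal{P}$ are both surjective for some small $\lambda > 0$. By the Lumer--Phillips theorem, both $\mathcal{P}$ and $-\mathcal{P}$ generate $C_0$-semigroups of contractions on $\mathcal{H}_2$, and hence $\mathcal{P}$ is the infinitesimal generator of a $C_0$-group $\{S(t)\}_{t\in\mathbb{R}}$. The skew-symmetry from (i) immediately gives $\tfrac{d}{dt}\|S(t)U_0\|_{\mathcal{H}_2}^2 = 2\langle \mathcal{P}S(t)U_0, S(t)U_0\rangle_{\mathcal{H}_2} = 0$ for $U_0 \in D(\mathcal{P})$, so $S(t)$ is an isometry for every $t$, and by density this extends to all of $\mathcal{H}_2$.

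The main obstacle is the bookkeeping in the skew-symmetry calculation: the dynamic boundary unknowns $\Psi_4,\Psi_5,\Psi_6$ together with the specific weights $E_1h_1$, $E_3h_3$, and $\alpha k$ in the $\mathbb{R}^3$-component of $\langle\cdot,\cdot\rangle_{\mathcal{H}_2}$ are precisely what make the three boundary residues (arising from integrating by parts against $u_x$, $v_x$, and $(-u+v+\alpha w_x)_x$) vanish. All signs and coefficients must match exactly, and the fourth-order term demands two careful integrations by parts exploiting every one of the boundary conditions at $x=0$ and $x=L$.
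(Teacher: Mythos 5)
Your argument is correct, and the skew-symmetry computation (i) is essentially the same integration-by-parts bookkeeping the paper carries out; but you close the argument by a genuinely different route. The paper establishes full skew-adjointness: it first shows $\left\langle \mathcal{P}U, U^{\sharp}\right\rangle_{\mathcal{H}_2}=\left\langle U, -\mathcal{P}U^{\sharp}\right\rangle_{\mathcal{H}_2}$ for all $U, U^{\sharp}\in D(\mathcal{P})$, then takes an arbitrary $U^{\sharp}\in D(\mathcal{P}^{*})$ and extracts from the residual boundary identity that $U^{\sharp}$ must satisfy the boundary conditions defining $D(\mathcal{P})$, concluding $D(\mathcal{P}^{*})=D(\mathcal{P})$ and $\mathcal{P}^{*}=-\mathcal{P}$, and finally invokes Stone's theorem. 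You instead prove only skew-symmetry ($\left\langle \mathcal{P}U, U\right\rangle_{\mathcal{H}_2}=0$ on $D(\mathcal{P})$, i.e., $\pm\mathcal{P}$ dissipative) and supply the missing range condition for both $\pm\mathcal{P}$ from Lemma \ref{0inresolv} together with the openness of $\rho(\mathcal{P})$, then apply Lumer--Phillips twice. Your route buys you the ability to skip the delicate identification of $D(\mathcal{P}^{*})$ — arguably the most fragile step of the paper's proof, since it requires arguing that the surviving boundary residues force $U^{\sharp}$ into $D(\mathcal{P})$ — at the modest cost of leaning on Lemma \ref{0inresolv}. One small imprecision: surjectivity of $\mathcal{P}$ does not by itself imply density of $D(\mathcal{P})$ (range being all of $\mathcal{H}_2$ says nothing about the domain); what rescues your claim is the standard fact that a dissipative operator with $R(\lambda I-\mathcal{P})=\mathcal{H}_2$ in a reflexive (here Hilbert) space is automatically densely defined, so you should invoke that (or check density directly) rather than attribute it to bijectivity alone.
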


\begin{proof}
Let $U^{\sharp}\in D\left(\mathcal{P}\right)$ given by \eqref{U-r-a}. Then, for all $U\in D\left(\mathcal{P}\right)$, given by \eqref{U-r}, we obtain, after some integration by parts and rearrangements, that
\begin{equation*}
\begin{split}
\left\langle \mathcal{P}U, U^{\sharp} \right\rangle_{\mathcal{H}_2}
=& \ \ \rho_1h_1\left\langle \dfrac{1}{\rho_1 h_1} \left[ E_1 h_1 u_{xx} + k(-u + v + \alpha w_x) \right], \Psi_1^{\sharp}\right\rangle+E_1 h_1\left\langle \Psi_{1x}, u^{\sharp}_x\right\rangle\\&+\rho_3 h_3\left\langle \dfrac{1}{\rho_3 h_3} \left[ E_3 h_3 v_{xx} - k(-u + v + \alpha w_x) \right], \Psi_2^{\sharp}\right\rangle+E_3 h_3\left\langle \Psi_{2x}, v_x^{\sharp}\right\rangle \\
& +\rho h\left\langle \dfrac{1}{\rho h} \left[ -EI w_{xxxx} + \alpha k (-u + v + \alpha w_x)_x \right], \Psi_3^{\sharp}\right\rangle +EI \left\langle \Psi_{3xx}, w_{xx}^{\sharp}\right\rangle \\ &+k\left\langle-\Psi_1+\Psi_2+\alpha \Psi_{3x},-u^{\sharp}+v^{\sharp}+\alpha w_{x}^{\sharp}   \right\rangle-E_1 h_1\left(u_x(L),\Psi_4^{\sharp}   \right)_{\mathbb{R}}\\
&-E_3h_3\left(v_x(L),\Psi_5^{\sharp}   \right)_{\mathbb{R}}-\alpha k\left([-u(L)+v(L)+\alpha k w_{x}(L)],\Psi_6^{\sharp}   \right)_{\mathbb{R}}\\
 & =\rho_1h_1\left\langle\Psi_1, - \dfrac{1}{\rho_1 h_1} \left[ E_1 h_1 u_{xx}^{\sharp} + k(-u^{\sharp} + v^{\sharp} + \alpha w_x^{\sharp}) \right]\right\rangle+E_1 h_1\left\langle u_x,-\Psi_{1x}^{\sharp}\right\rangle\\&+\rho_3 h_3\left\langle \Psi_2,-\dfrac{1}{\rho_3 h_3} \left[ E_3 h_3 v_{xx}^{\sharp} - k(-u^{\sharp} + v^{\sharp} + \alpha w_x^{\sharp}) \right] \right\rangle+E_3 h_3\left\langle v_x,-\Psi_{2x}^{\sharp} \right\rangle \\
& +\rho h\left\langle \Psi_3,-\dfrac{1}{\rho h} \left[ -EI w_{xxxx}^{\sharp} + \alpha k (-u^{\sharp} + v^{\sharp} + \alpha w_x^{\sharp})_x \right] \right\rangle+EI \left\langle  w_{xx},-\Psi_{3xx}^{\sharp}\right\rangle \\ &+k\left\langle-u+v+\alpha w_{x} ,-\left[\Psi_1^{\sharp}+\Psi_2^{\sharp}+\alpha \Psi_{3x}^{\sharp} \right]  \right\rangle+E_1 h_1\left(\Psi_4,u_x^{\sharp}(L)   \right)_{\mathbb{R}}\\
&+E_3h_3\left(\Psi_5,v_x^{\sharp}(L)   \right)_{\mathbb{R}}+\alpha k\left(\Psi_6,[-u^{\sharp}(L)+v^{\sharp}(L)+\alpha k w_{x}^{\sharp}(L)]   \right)_{\mathbb{R}}\\ &=\left\langle U, -\mathcal{P}U^{\sharp} \right\rangle_{\mathcal{H}_2}.
\end{split}
\end{equation*}
Hence, $D\left(\mathcal{P}\right) \subset D\left(\mathcal{P}^{*}\right)$ and $\mathcal{P}^{*}U^{\sharp} = -\mathcal{P}U^{\sharp}$, for any $U^{\sharp} \in D\left(\mathcal{P}\right)$.
\noindent On the other hand, let $U^{\sharp}
\in D\left(\mathcal{P}^{*}\right)$. Then, by definition, 
\begin{align}\label{adj_1}
\left\langle
\mathcal{P}U, U^{\sharp}\right\rangle_{\mathcal{H}_2} = \left\langle
U, \mathcal{P}^{*}U^{\sharp}\right\rangle_{\mathcal{H}_2}, \text{ for all }\ \ U
\in D\left(\mathcal{P}\right).
\end{align}By the same calculations as above, we have that
\begin{equation}\label{adj_2}
\begin{aligned}
\left\langle
U,\mathcal{P}^{*}U^{\sharp}\right\rangle_{\mathcal{H}_2}  =   &\left\langle
\mathcal{P}U, U^{\sharp}\right\rangle_{\mathcal{H}_2}
\end{aligned}
\end{equation}
Therefore, from \eqref{adj_1} and \eqref{adj_2} we obtain that
\begin{align*}
   0=& - E_1h_1\left(u_x(0) , \Psi_1^{\sharp}(0) \right)_{\mathbb{R}}-E_3h_3\left((v_x(0) , \Psi_2^{\sharp}(0) \right)_{\mathbb{R}}-EI\left((w_{xx}(0) , \Psi_{3x}^{\sharp}(0) \right)_{\mathbb{R}} \\
&+E_1h_1\left\langle u_x(L), \Psi_1^{\sharp}(L)-\Psi_4^{\sharp}   \right\rangle+E_3h_3\left\langle v_x(L), \Psi_2^{\sharp}(L)-\Psi_5^{\sharp}   \right\rangle\nonumber \\
&-EI\left(w_{xx}(0) , \Psi_{3x}^{\sharp}(0)   \right)_{\mathbb{R}} +EI\left\langle \Psi_{3x}(L), w_{xx}^{\sharp}(L)  \right\rangle-EI\left\langle \Psi_{3}(L), w_{xxx}^{\sharp}(L)  \right\rangle
\\&
+\alpha k\left([-u(L)+v(L)+\alpha k w_{x} (L)],\Psi_3^{\sharp}(L)-\Psi_6^{\sharp}   \right)_{\mathbb{R}}. \nonumber
\end{align*}
Since the above equality holds for all $U \in D(\mathcal{P})$, it follows that $U^{\sharp} \in D(\mathcal{P})$. Consequently, we deduce that $D(\mathcal{P}) = D(\mathcal{P}^{*})$, and from the previous computations, we also obtain $\mathcal{P}^{*} = -\mathcal{P}$. Therefore, $\mathcal{P}$ is a skew-adjoint operator, and by Stone's Theorem, it generates a group of isometries $\{S(t)\}_{t \in \mathbb{R}}$ on $\mathcal{H}_2$.
\end{proof}

As an immediate consequence of Lemma \ref{0inresolv}, Proposition \ref{teogeneinfiA}, and standard results from the theory of evolution equations (see, {\em e.g.,}  \cite{Pazy}), we obtain the following existence and uniqueness result.
\begin{theorem}\label{ex-hom}
For any $U_0 \in D\left(\mathcal{P}\right)$, there exists a unique solution $U$ of the abstract Cauchy problem \eqref{abs} such that
\begin{align}\label{classSoluhomo}
U  \in C\left([0, \infty); D\left(\mathcal{P}\right) \right) \cap C^1\left([0, \infty); \mathcal{H}_2\right).
\end{align}
Moreover, for any $t>0$, we have that
\begin{align}\label{identi_gisometr}
\Vert U(t) \Vert^2_{\mathcal{H}_2} =\Vert U_0 \Vert^2_{\mathcal{H}_2}.
\end{align}
\end{theorem}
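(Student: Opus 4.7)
The plan is to leverage the two previously established results---Lemma \ref{0inresolv} and Proposition \ref{teogeneinfiA}---and then invoke standard semigroup theory. Since Proposition \ref{teogeneinfiA} shows that $(\mathcal{P},D(\mathcal{P}))$ is the infinitesimal generator of a $C_0$-group of isometries $\{S(t)\}_{t\in\mathbb{R}}$ on $\mathcal{H}_2$, the candidate solution of \eqref{abs} is naturally given by $U(t):=S(t)U_0$. Existence is thus essentially automatic, and the remaining task is to package the regularity and the energy identity properly.

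For the regularity claim \eqref{classSoluhomo}, I would appeal to the classical result for $C_0$-semigroups (see, e.g., \cite{Pazy}): if $U_0\in D(\mathcal{P})$, then $S(t)U_0\in D(\mathcal{P})$ for every $t\ge 0$, the map $t\mapsto S(t)U_0$ belongs to $C([0,\infty);D(\mathcal{P}))\cap C^1([0,\infty);\mathcal{H}_2)$, and its derivative equals $\mathcal{P}\, S(t)U_0$. Here $D(\mathcal{P})$ is endowed with the graph norm, which, thanks to Lemma \ref{0inresolv} (and hence to the closed graph theorem applied to $\mathcal{P}^{-1}$), is equivalent to the topology inherited from the compact embedding into $\mathcal{H}_2$. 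Uniqueness is also standard: any other classical solution $\widetilde{U}$ of \eqref{abs} satisfies $\tfrac{d}{dt}(S(-t)\widetilde{U}(t))=0$, so $\widetilde{U}(t)=S(t)U_0$.

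Finally, the identity \eqref{identi_gisometr} is a direct consequence of the fact that each $S(t)$ is an isometry on $\mathcal{H}_2$, which in turn follows from the skew-adjointness $\mathcal{P}^{*}=-\mathcal{P}$ established in the proof of Proposition \ref{teogeneinfiA}; alternatively, for $U_0\in D(\mathcal{P})$ one computes directly $\tfrac{d}{dt}\lVert U(t)\rVert_{\mathcal{H}_2}^{2}=2\,\mathrm{Re}\,\langle \mathcal{P} U(t),U(t)\rangle_{\mathcal{H}_2}=0$ and concludes by density. I do not anticipate any substantive obstacle: the real analytic work has already been carried out in Lemma \ref{0inresolv} (surjectivity via Lax--Milgram and compactness of $\mathcal{P}^{-1}$) and in Proposition \ref{teogeneinfiA} (the boundary-term cancellations showing $\mathcal{P}^{*}=-\mathcal{P}$ and the application of Stone's theorem), so Theorem \ref{ex-hom} is essentially a packaging statement collecting these facts into the form customary for evolution equations.
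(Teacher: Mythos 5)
Your proposal is correct and matches the paper exactly: the paper presents Theorem \ref{ex-hom} as an immediate consequence of Lemma \ref{0inresolv}, Proposition \ref{teogeneinfiA}, and standard semigroup theory from \cite{Pazy}, which is precisely the route you take (solution $U(t)=S(t)U_0$, regularity from the classical generation theorem, and \eqref{identi_gisometr} from the isometry of the group). No gaps.
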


\subsection{The nonhomogeneous system} In this subsection, we focus on analyzing the complete system \eqref{2bbm}. We start with the following result:
 \begin{proposition}
For any $U_0 \in D\left(\mathcal{P}\right)  $
and $ f_i \in  C^{\infty}_0(0,\infty), \ \ i=1,2,3  $,
system \eqref{2bbm} has a unique solution  $U  \in C\left([0, \infty); D\left(\mathcal{P}\right) \right) \cap C^1\left([0, \infty); \mathcal{H}_2\right).$
\end{proposition}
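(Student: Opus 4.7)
The plan is to reformulate the controlled system \eqref{2bbm} as an inhomogeneous abstract Cauchy problem on $\mathcal{H}_2$ of the form
\begin{equation*}
U_t = \mathcal{P} U + F(t), \qquad U(0) = U_0,
\end{equation*}
where $\mathcal{P}$ is the generator defined in \eqref{opae2}--\eqref{DAA-1} and
\begin{equation*}
F(t) = \bigl(0, 0, 0, 0, 0, 0, f_1(t), f_2(t), f_3(t)\bigr)^{\top}.
\end{equation*}
The first step is to verify that this reformulation reproduces \eqref{2bbm} exactly. The six PDE-type components are identical to the homogeneous case, since the controls affect only the dynamic boundary conditions. For the last three components, the identifications $\Psi_4 = u_t(L)$, $\Psi_5 = v_t(L)$, $\Psi_6 = w_t(L)$, together with the dynamic conditions $u_{tt}(L)+u_x(L) = f_1$, $v_{tt}(L)+v_x(L) = f_2$, and $w_{tt}(L) - u(L) + v(L) + \alpha w_x(L) = f_3$, yield precisely $\dot{\Psi}_4 = -u_x(L) + f_1$, $\dot{\Psi}_5 = -v_x(L) + f_2$, and $\dot{\Psi}_6 = -[-u(L) + v(L) + \alpha w_x(L)] + f_3$, which matches the seventh, eighth and ninth rows of $\mathcal{P}U + F(t)$.

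Next, I would invoke the classical theory of inhomogeneous abstract Cauchy problems (see, e.g., \cite{Pazy}). By Proposition~\ref{teogeneinfiA}, the operator $\mathcal{P}$ generates the group of isometries $\{S(t)\}_{t\in\mathbb{R}}$ on $\mathcal{H}_2$. Since $f_i \in C_0^{\infty}(0,\infty)$ for $i=1,2,3$, the forcing term $F$ belongs to $C^1([0,\infty);\mathcal{H}_2)$ and is compactly supported in time. Combined with the hypothesis $U_0 \in D(\mathcal{P})$, this regularity is sufficient to ensure that the mild solution given by Duhamel's formula,
\begin{equation*}
U(t) = S(t) U_0 + \int_0^t S(t-s) F(s)\, ds,
\end{equation*}
is in fact a classical solution, that is, $U \in C([0,\infty); D(\mathcal{P})) \cap C^1([0,\infty); \mathcal{H}_2)$. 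Uniqueness is a standard consequence of the variation of parameters formula in this framework.

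The principal subtlety, and essentially the only nontrivial issue, is ensuring that the dynamic boundary controls of \eqref{2bbm} have been lifted into the state space in such a way that the control enters as a bounded perturbation on $\mathcal{H}_2$. This is precisely the role of the three additional $\mathbb{R}$-components in $\mathcal{H}_2$: they transform what would otherwise be an unbounded boundary forcing into a genuine element $F(t) \in \mathcal{H}_2$, so that the Hille--Yosida/Pazy machinery applies directly without any further trace estimates. Once the reformulation is in place, no additional analysis is needed, and the result follows.
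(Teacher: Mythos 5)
Your proof is correct, but it takes a genuinely different route from the paper. The paper does not treat the controls as a forcing in the $\mathbb{R}^3$-components: instead it subtracts the homogeneous solution $(\tilde u,\tilde v,\tilde w)$ furnished by Theorem \ref{ex-hom} and adds a boundary lifting $f_i(t)\phi_1(x)$, where $\phi_1\in C^\infty[0,L]$ satisfies $\phi_1(0)=\phi_1(L)=0$ and $\phi_1'(L)=-1$, so that the shifted unknown $(z,\eta,\varphi)$ solves a problem with zero initial data, \emph{homogeneous} dynamic boundary conditions, and smooth interior forcing $(F,G,H)$ built from $f_i$, $f_i''$ and derivatives of $\phi_1$; the inhomogeneous semigroup theory is then applied to that lifted problem. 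Your observation --- that the augmentation of the state space by $\Psi_4,\Psi_5,\Psi_6$ already turns the dynamic boundary controls into the bounded, $\mathcal{H}_2$-valued inhomogeneity $F(t)=(0,\dots,0,f_1,f_2,f_3)^{\top}$ --- is accurate (the identification $\dot\Psi_4=-u_x(L)+f_1$, etc., reproduces exactly the seventh through ninth rows), and since $F\in C^1([0,\infty);\mathcal{H}_2)$ and $U_0\in D(\mathcal{P})$, the Pazy corollary on classical solutions applies directly and yields $U\in C([0,\infty);D(\mathcal{P}))\cap C^1([0,\infty);\mathcal{H}_2)$, with the compatibility conditions $\Psi_4=\Psi_1(L)$, etc., holding automatically because the classical solution remains in $D(\mathcal{P})$. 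Your argument is shorter, avoids constructing the lifting function and the auxiliary homogeneous solution, and in fact only needs $f_i\in C^1$ rather than the $C^2$ regularity implicitly consumed by the $f_i''\phi_1$ terms in the paper's forcing; the paper's lifting technique is the more traditional one and would still work if the controls entered through genuinely unbounded (trace-type) boundary conditions, but here the structure of $\mathcal{H}_2$ makes it unnecessary.
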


\begin{proof}
Let $\phi_1 \in C^{\infty}[0, L]$ be a smooth function satisfying the boundary conditions $\phi_1(0) = \phi_1(L) = 0$ and $\phi_1'(L) = -1$.
We now introduce the following change of variables:
\begin{equation}\label{change}
\left(\begin{array}{c}
z \\ \eta \\ \varphi \end{array}\right) = \left(\begin{array}{c}
u \\ v \\ w \end{array}\right) - \left(\begin{array}{c}
\tilde{u} \\ \tilde{v} \\ \tilde{w} \end{array}\right) + \left(\begin{array}{c}
f_1(t)\phi_1(x) \\ f_2(t)\phi_1(x) \\ f_3(t)\phi_1(x) \end{array}\right),
\end{equation}
where $ \left(\tilde{u}, \tilde{v}, \tilde{w}\right)$ is the unique  solution of the system
\begin{equation}\label{siste22}
\left\{\begin{array}{ll}
\rho_{1} h_1 \tilde{u}_{tt} - E_1 h_1 \tilde{u}_{xx} - k\left( -\tilde{u} + \tilde{v} + \alpha \tilde{w}_x \right) = 0, & x \in (0,L),\;\; t > 0, \\[1ex]
\rho_{3} h_3 \tilde{v}_{tt} - E_3 h_3 \tilde{v}_{xx} + k\left( -\tilde{u} + \tilde{v} + \alpha \tilde{w}_x \right) = 0, & x \in (0,L),\;\; t > 0, \\[1ex]
\rho h \tilde{w}_{tt} + EI \tilde{w}_{xxxx} - \alpha k \left( -\tilde{u} + \tilde{v} + \alpha \tilde{w}_x \right)_x = 0, & x \in (0,L),\;\; t > 0, \\[1ex]
\tilde{u}(0,t) = \tilde{v}(0,t) = \tilde{w}_x(0,t) = \tilde{w}_{xxx}(0,t) = 0, & t > 0, \\[1ex]
\tilde{w}_{xx}(L,t) = \tilde{w}_{xxx}(L,t) = 0, & t > 0, \\[1ex]
\tilde{u}_{tt}(L,t) + \tilde{u}_x(L,t) = 0, & t > 0, \\[1ex]
\tilde{v}_{tt}(L,t) + \tilde{v}_x(L,t) = 0, & t > 0, \\[1ex]
\tilde{w}_{tt}(L,t) - \tilde{u}(L,t) + \tilde{v}(L,t) + \alpha \tilde{w}_x(L,t) = 0, & t > 0, \\[1ex]
\left( \tilde{u}, \tilde{v}, \tilde{w} \right)(x,0) = \left( u_0, v_0, w_0 \right)(x), & x \in (0,L), \\[1ex]
\left( \tilde{u}_t, \tilde{v}_t, \tilde{w}_t \right)(x,0) = \left( u_1, v_1, w_1 \right)(x), & x \in (0,L),
\end{array}
\right.
\end{equation}
given by Theorem \ref{ex-hom}. Thus, $ \left(z, \eta, \varphi\right)$ solves the problem
\begin{equation}\label{siste23}
\left\{
\begin{array}{ll}
\rho_{1} h_1 z_{tt} - E_1 h_1 z_{xx} - k\left( -z + \eta + \alpha \varphi_x \right) = F, & x \in (0,L),\;\; t > 0, \\[1ex]
\rho_{3} h_3 \eta_{tt} - E_3 h_3 \eta_{xx} + k\left( -z + \eta + \alpha \varphi_x \right) = G, & x \in (0,L),\;\; t > 0, \\[1ex]
\rho h \varphi_{tt} + EI \varphi_{xxxx} - \alpha k \left( -z + \eta + \alpha \varphi_x \right)_x = H, & x \in (0,L),\;\; t > 0, \\[1ex]
z(0,t) = \eta(0,t) = \varphi_x(0,t) = \varphi_{xxx}(0,t) = 0, & t > 0, \\[1ex]
\varphi_{xx}(L,t) = \varphi_{xxx}(L,t) = 0, & t > 0, \\[1ex]
z_{tt}(L,t) + z_x(L,t) = 0, & t > 0, \\[1ex]
\eta_{tt}(L,t) + \eta_x(L,t) = 0, & t > 0, \\[1ex]
\varphi_{tt}(L,t) - z(L,t) + \eta(L,t) + \alpha \varphi_x(L,t) = 0, & t > 0, \\[1ex]
\left( z, \eta, \varphi \right)(x,0) = \left( 0, 0, 0 \right), & x \in (0,L), \\[1ex]
\left( z_t, \eta_t, \varphi_t \right)(x,0) = \left( 0, 0, 0 \right), & x \in (0,L).
\end{array}
\right.
\end{equation}
Here,
\begin{equation*}
 \left\{\begin{array}{ll}
F &= \rho_1 h_1 f_1^{\prime \prime}(t)\phi_1(x) - E_1 h_1 f_1(t)\phi_1^{\prime \prime}(x) - k\left[ -f_1(t)\phi_1(x) + f_2(t)\phi_1(x) + \alpha f_3(t)\phi_1^{\prime}(x) \right], \\ \\
G &= \rho_3 h_3 f_2^{\prime \prime}(t)\phi_1(x) - E_3 h_3 f_2(t)\phi_1^{\prime \prime}(x) + k\left[ -f_1(t)\phi_1(x) + f_2(t)\phi_1(x) + \alpha f_3(t)\phi_1^{\prime}(x) \right], \\ \\
H &= \rho h f_3^{\prime \prime}(t)\phi_1(x) + EI f_3(t)\phi_1^{(4)}(x) - \alpha k\left[ -f_1(t)\phi_1^{\prime}(x) + f_2(t)\phi_1^{\prime}(x) + \alpha f_3(t)\phi_1^{\prime \prime}(x) \right],
 \end{array}\right.
\end{equation*}
and
$$ (F(t,x),G(t,x),H(t, x))\in \left[C^1(\left[ 0,\infty \right]\times \left[ 0, L \right] )\right]^3.$$

Now, let us use the notation established in the previous section to express the system \eqref{siste23} as an abstract evolution equation in the following form
\begin{equation}\label{sistem24}
\left\{
\begin{array}
[c]{l}
W_t + \mathcal{P} W= \mathcal{K}\\
W(0) = 0,
\end{array}\right.\nonumber
\end{equation}
where $W = (z, z_t, \eta, \eta_t, \varphi, \varphi_t, z_t(L, T), \eta_t(L, t), \varphi_t(L, t) )^{\top}$ and
$$ \mathcal{K} = \left(0, \dfrac{F}{\rho_1 h_1}, 0, \dfrac{G}{\rho_3 h_3}, 0, \dfrac{H}{\rho h}, 0, 0, 0 \right)^{\top} \in\left[
C^1(\left[ 0,\infty \right]\times \left[ 0, L \right] )\right]^9.$$
Since the operator $\mathcal{P}$ generates a group of isometries on the Hilbert space $\mathcal{H}_2$, it follows that system \eqref{siste23} admits a unique solution
\[ W \in C\left([0, \infty); D\left(\mathcal{P}\right) \right) \cap C^1\left([0, \infty); \mathcal{H}_2\right). \]
Therefore, by reverting the change of variables in \eqref{change}, the proof is complete.
\end{proof}

With the well-posedness result established, we now proceed to analyze solutions of system \eqref{2bbm} in the transposition sense. For that, given $T > 0$, $U_0 \in D\left(\mathcal{P}\right)$, $$(J_1, J_2, J_3) \in  L^2\left(0,T;  \left(\left[ H_*^1(0,L) \right]^2\times H_*^2(0,L)\right)^{*}\right)$$
and $ f_i \in  L^2(0,T)$, for $i=1,2,3,$ consider the non-homogeneous system
\begin{equation}\label{2bbm_nonhomo_2}
\left\{\begin{array}{ll}
\rho_{1} h_1 u_{tt} - E_1 h_1 u_{xx} - k\left( -u + v + \alpha w_x \right) = J_1, & x \in (0,L),\;\; t > 0, \\[1ex]
\rho_{3} h_3 v_{tt} - E_3 h_3 v_{xx} + k\left( -u + v + \alpha w_x \right) = J_2, & x \in (0,L),\;\; t > 0, \\[1ex]
\rho h w_{tt} + EI w_{xxxx} - \alpha k \left( -u + v + \alpha w_x \right)_x = J_3, & x \in (0,L),\;\; t > 0, \\[1ex]
u(0,t) = v(0,t) = w_x(0,t) = w_{xxx}(0,t) = 0, & t > 0, \\[1ex]
w_{xx}(L,t) = w_{xxx}(L,t) = 0, & t > 0, \\[1ex]
u_{tt}(L,t) + u_x(L,t) = f_1, & t > 0, \\[1ex]
v_{tt}(L,t) + v_x(L,t) = f_2, & t > 0, \\[1ex]
w_{tt}(L,t) - u(L,t) + v(L,t) + \alpha w_x(L,t) = f_3, & t > 0, \\[1ex]
\left( u, v, w \right)(x,0) = \left( u_0, v_0, w_0 \right)(x), & x \in (0,L), \\[1ex]
\left( u_t, v_t, w_t \right)(x,0) = \left( u_1, v_1, w_1 \right)(x), & x \in (0,L).
\end{array}
\right.
\end{equation}

Now, let us introduce the following definition.
\begin{definition}
A solution by transposition to the system \eqref{2bbm_nonhomo_2} is defined as a vector-valued function
$U = (u, u_t, v, v_t, w, w_t, u_t(L), v_t(L), w_t(L))^\top$ belonging to the space
$C([0,T]; \mathcal{D}(\mathcal{P}))$, which satisfies the following property: For every
$\tau \in [0,T]$ and every  function $W^\tau \in \mathcal{D}(\mathcal{P}^*)$ of the form
\[
W^\tau = (\chi_0^\tau, \chi_1^\tau, \eta_0^\tau, \eta_1^\tau, \Theta_0^\tau, \Theta_1^\tau, \chi_1^\tau(L), \eta_1^\tau(L), \Theta_1^\tau(L))^\top,
\]
the following integral identity holds:
\begin{align}\label{equa2}
& \displaystyle \left\langle  \left( \begin{array}{c}
\rho_1 h_1 u_t(\cdot, \tau)\\ - \rho_1 h_1 u(\cdot, \tau) \\  \rho_3 h_3 v_t(\cdot, \tau)\\ - \rho_3h_3 v(\cdot, \tau) \\
 \rho h w_t(\cdot, \tau) , -\rho h w(\cdot, \tau)\\
 -E_1 h_1 u(L, \tau)\\
 -E_3 h_3 v(L, \tau)\\
 -\alpha k w(L, \tau) \end{array}
 \right)^{\top}, \left(\begin{array}{c}
 \chi_0^{\tau}\\ \chi_1^{\tau} \\ \eta_0^{\tau} \\ \eta_1^{\tau}\\ \Theta_0^{\tau} \\ \Theta_1^{\tau} \\\chi_t(L, \tau)\\ \eta_t(L, \tau)\\ \Theta_t(L, \tau) \end{array}\right)  \displaystyle \right\rangle
 =\displaystyle \left\langle  \left(\begin{array}{c}
 \rho_1 h_1 u_1 \\ - \rho_1 h_1 u_0 \\  \rho_3 h_3 v_1\\- \rho_3 h_3 v_0 \\
 \rho h w_1 \\ -\rho h w_0\\
 - E_1 h_1 u(L, 0)\\
 -E_3 h_3 v(L, 0)\\
 -\alpha k w(L, 0)\end{array}\right), \left(\begin{array}{c}
 \chi(0)\\ \chi_t(0) \\ \eta(0) \\ \eta_t(0)\\ \Theta(0) \\ \Theta_t(0) \\\chi_t(L, 0)\\ \eta_t(L, 0)\\ \Theta_t(L, 0) \end{array}\right)  \displaystyle \right\rangle
 \\
& +\displaystyle\int_{0}^{\tau} \, \left \langle \left(J_1(t), J_2(t), J_3(t)) \right), \left(\chi(t), \eta(t), \Theta(t) \right)\right\rangle_{\left[\left(  H_*^1(0,L) \right)^{*}, H_*^1(0,L) \right]^3}dt \nonumber\\
&+\left((E_1h_1f_1(t), E_3 h_3 f_2(t), \alpha k f_3(t)),  1_{(0, \tau)}(\chi(L, t), \eta(L, t), \Theta(L, t) )\right)_{[L^2(0, T)]^3} \nonumber \\
&+ \left( \left(E_1h_1u_t(L, 0), E_3 h_3v_t(L, 0), \alpha k w_t(L, 0), \left( \chi(L, 0), \eta(L, 0), \Theta(L, 0) \right)\right)\right)_{\mathbb{R}^3}\nonumber
\end{align}
where the inner product is in $\mathcal{H}_2$ and $W=\left( \chi, \chi_t, \eta, \eta_t, \Theta, \Theta_t,  \chi_t(L, t), \eta_t(L, t), \Theta_t(L, t) \right)^{\top} $ is solution of the adjoint system
\begin{equation}\label{sistema3}
\left\{\begin{array}{ll}
\rho_{1} h_1 \chi_{tt} - E_1 h_1 \chi_{xx} - k\left( -\chi + \eta + \alpha \Theta_x \right) = 0, & x \in (0,L),\;\; t\in (0, \tau) \\[1ex]
\rho_{3} h_3 \eta_{tt} - E_3 h_3 \eta_{xx} + k\left( -\chi + \eta + \alpha \Theta_x \right) = 0, & x \in (0,L),\;\; t\in (0, \tau) \\[1ex]
\rho h \Theta_{tt} + EI \Theta_{xxxx} - \alpha k \left( -\chi + \eta + \alpha \Theta_x \right)_x = 0, & x \in (0,L),\;\; t\in (0, \tau) \\[1ex]
\chi(0,t) = \eta(0,t) = \Theta_x(0,t) = \Theta_{xxx}(0,t) = 0, & t\in (0, \tau) \\[1ex]
\Theta_{xx}(L,t) = \Theta_{xxx}(L,t) = 0, & t\in (0, \tau) \\[1ex]
\chi_{tt}(L,t) + \chi_x(L,t) = 0, & t\in (0, \tau) \\[1ex]
\eta_{tt}(L,t) + \eta_x(L,t) = 0, & t\in (0, \tau) \\[1ex]
\Theta_{tt}(L,t) - \chi(L,t) + \eta(L,t) + \alpha \Theta_x(L,t) = 0, & t\in (0, \tau) \\[1ex]
\left( \chi, \eta, \Theta \right)(x,0) = \left( \chi_0, \eta_0, \Theta_0 \right)(x), & x \in (0,L) \\[1ex]
\left( \chi_t, \eta_t, \Theta_t \right)(x,0) = \left( \chi_1, \eta_1, \Theta_1 \right)(x), & x \in (0,L) \\
\left(  \chi, \eta, \Theta \right)  \left( L,\tau \right)  =(0, 0, 0).
\end{array}
\right.
\end{equation}
\end{definition}

The following proposition guarantees that the system \eqref{2bbm} admits a unique solution in the transposition sense.
\begin{proposition}
Let $T > 0$, $U_0 \in D\left(\mathcal{P}\right)  $, $(J_1, J_2, J_3) \in  \left(L^2\left(0,T;  \left( H_*^1(0,L) \right)^{*}\right)\right)^3 $
and $ f_i \in  L^2(0,T)$, for $i=1,2,3  $. Then, there exists a unique solution
$$ U = (u, u_t, v, v_t, w, w_t, u_t(L), v_t(L), w_t(L))^\top \in
C ([0,T]; D\left(\mathcal{P}\right)),$$of system \eqref{2bbm_nonhomo_2} which verifies \eqref{equa2}.
\end{proposition}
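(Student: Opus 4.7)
\begin{proof2}[Proof proposal]
The plan is to exploit the HUM-type duality inherent in the definition of transposition solution. First, fix $\tau \in [0,T]$. Because Proposition \ref{teogeneinfiA} shows that $\mathcal{P}$ generates a group of isometries on $\mathcal{H}_2$ and $\mathcal{P}^* = -\mathcal{P}$ with $D(\mathcal{P}^*) = D(\mathcal{P})$, the adjoint system \eqref{sistema3} with final data at $t=\tau$ is well-posed backward in time: for any $W^\tau \in D(\mathcal{P}^*)$ there exists a unique solution
\[
W = (\chi,\chi_t,\eta,\eta_t,\Theta,\Theta_t,\chi_t(L,\cdot),\eta_t(L,\cdot),\Theta_t(L,\cdot))^\top \in C([0,\tau];D(\mathcal{P})) \cap C^1([0,\tau];\mathcal{H}_2),
\]
and moreover $\|W(t)\|_{\mathcal{H}_2} = \|W^\tau\|_{\mathcal{H}_2}$ for every $t \in [0,\tau]$ by \eqref{identi_gisometr}.

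Next, for each $\tau \in [0,T]$ I define the antilinear functional $\Lambda_\tau \colon D(\mathcal{P}^*) \to \mathbb{R}$ given by the right-hand side of \eqref{equa2}. The key point is to verify its continuity with respect to the $\mathcal{H}_2$ norm on $W^\tau$. The first term is bounded by $\|U_0\|_{\mathcal{H}_2}\|W^\tau\|_{\mathcal{H}_2}$ via the isometry property. The source term is handled by the duality pairing and the continuous embedding of $D(\mathcal{P})$ into $[H_*^1(0,L)]^2 \times H_*^2(0,L)$, which gives $\chi,\eta,\Theta \in C([0,\tau];H_*^1(0,L) \text{ or } H_*^2(0,L))$ and hence
\[
\int_0^\tau \|(\chi,\eta,\Theta)(t)\|_{[H_*^1]^2 \times H_*^2}^2\,dt \le \tau \, \|W^\tau\|_{\mathcal{H}_2}^2.
\]
The boundary control terms require controlling $\chi(L,\cdot), \eta(L,\cdot), \Theta(L,\cdot)$ in $L^2(0,T)$; since $W \in C([0,\tau];D(\mathcal{P}))$, the trace map at $x=L$ is continuous from $D(\mathcal{P})$ into $\mathbb{R}^3$, so these traces are bounded continuous functions and satisfy
\[
\|\chi(L,\cdot)\|_{L^2(0,\tau)}^2 + \|\eta(L,\cdot)\|_{L^2(0,\tau)}^2 + \|\Theta(L,\cdot)\|_{L^2(0,\tau)}^2 \le C\tau\,\|W^\tau\|_{\mathcal{H}_2}^2,
\]
which, combined with $f_i \in L^2(0,T)$, yields continuity of the boundary contributions. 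The last line is a pointwise evaluation at $t=0$ and is controlled similarly.

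Once $\Lambda_\tau$ is shown to be continuous on $D(\mathcal{P}^*)$ in the $\mathcal{H}_2$-norm, it extends uniquely to a continuous antilinear form on $\mathcal{H}_2$, and by the Riesz representation theorem there exists a unique element in $\mathcal{H}_2$ which, once identified via the duality bracket appearing on the left-hand side of \eqref{equa2}, produces the nine components $(u,u_t,v,v_t,w,w_t,u_t(L),v_t(L),w_t(L))(\tau)$. Standard arguments show that $\tau \mapsto U(\tau)$ inherits continuity into $D(\mathcal{P})$ (after rewriting the identity in terms of the group $S(t-\tau)$ and exploiting strong continuity), so that $U \in C([0,T];D(\mathcal{P}))$. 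Uniqueness is immediate: if $\tilde U$ is another transposition solution, the difference produces a zero functional against arbitrary $W^\tau \in D(\mathcal{P}^*)$, hence vanishes for each $\tau$ by density of $D(\mathcal{P}^*)$ in $\mathcal{H}_2$.

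The main obstacle I anticipate is organizing the pairing in \eqref{equa2} so that the functional is indeed continuous in the weak norm one needs for Riesz, since the formula mixes $\mathcal{H}_2$-inner products with boundary $L^2(0,T)$ terms and with an $\mathbb{R}^3$ initial-trace term. In particular, verifying that the trace map at $x=L$ from $D(\mathcal{P})$ is continuous with a $\tau$-independent bound and that the approximation by smooth controls (as done in the previous proposition with the lifting $\phi_1$) is compatible with passing to the limit in \eqref{equa2} requires careful bookkeeping; this is where the previous step, namely the existence of a classical solution for smooth data and the identity obtained by integration by parts against the adjoint solution, provides the consistency needed to pass to limits by density of $C_0^\infty(0,\infty)$ in $L^2(0,T)$.
\end{proof2}
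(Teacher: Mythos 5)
Your proposal is correct and takes essentially the same route as the paper: both define the linear functional given by the right-hand side of \eqref{equa2}, bound it using the boundedness of the adjoint group $S(\tau-t)$ on $\mathcal{H}_2$ together with the trace and duality estimates, and conclude existence and uniqueness by the Riesz representation theorem, deferring the continuity of $\tau \mapsto U(\tau)$ to a standard argument. Your term-by-term verification of the continuity of the functional is merely a more explicit version of the paper's single Cauchy--Schwarz estimate.
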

\begin{proof}
Let $T > 0$ and $\tau \in [0,T]$. According to Theorem \ref{ex-hom}, the system \eqref{sistema3} has a unique solution $W$ expressed as
\begin{align*}
W(t) = S(\tau - t) W^{\tau},
\end{align*}
and there exists a constant $C_T > 0$ such that
\begin{align}\label{deestiWparariesz}
\Vert W(t) \Vert_{\mathcal{H}_2} \leq C_T \Vert W^{\tau} \Vert_{\mathcal{H}_2}, \quad \forall\, t \in [0, \tau].
\end{align}

We define $\mathcal{R}$ as the linear functional corresponding to the right-hand side of \eqref{equa2}, namely
\begin{align*}
\mathcal{R}\left(W^{\tau} \right) &= \left\langle V_0^1, W(0)\right\rangle_{ \mathcal{H}_2}  +\displaystyle\int_{0}^{\tau} \, \left \langle \left(J_1(t), J_2(t), J_3(t)) \right), \left(\chi(t), \eta(t), \Theta(t) \right)\right\rangle_{\left[\left(  H_*^1(0,L) \right)^{*}, H_*^1(0,L) \right]^3}dt \nonumber\\
&+\left((E_1h_1f_1(t), E_3 h_3 f_2(t), \alpha k f_3(t)),  1_{(0, \tau)}(\chi(L, t), \eta(L, t), \Theta(L, t) )\right)_{[L^2(0, T)]^3} \nonumber \\
&+ \left( \left(E_1h_1u_t(L, 0), E_3 h_3v_t(L, 0), \alpha k w_t(L, 0), \left( \chi(L, 0), \eta(L, 0), \Theta(L, 0) \right)\right)\right)_{\mathbb{R}^3}
\end{align*}
where
\begin{align*}
&V_0^1= \bigg( \rho_1 h_1 u_1, - \rho_1 h_1 u_0, \rho_3 h_3 v_1,- \rho_3 h_3 v_0, \rho h w_1, \\ &\hspace{4cm} -\rho h w_0,
 - E_1 h_1 u(L, 0),
 -E_3 h_3 v(L, 0),
 -\alpha k w(L, 0)\bigg)^{\top}.
\end{align*}
From \eqref{deestiWparariesz} and the Cauchy–Schwarz inequality, we deduce
\begin{equation*}
\begin{split}
 |\mathcal{R}\left(W^{\tau}\right)| &\leq \Vert V^1_0 \Vert_{\mathcal{H}_2} \Vert W(0) \Vert_{\mathcal{H}_2} + C_T \Vert W(t) \Vert_{\mathcal{H}_2} \Vert(f_1, f_2, f_3) \Vert_{[L^2(0,T)]^3}\\
 &\quad + C_T \Vert W(t) \Vert_{C\left([0, \tau]; \mathcal{H}_2\right)} \Vert(J_1, J_2, J_3) \Vert_{ \left(L^2\left(0,T;  \left( H_*^1(0,L) \right)^{*}\right)\right)^3}\\
 &\quad + C_T \Vert W(t) \Vert_{C\left([0, \tau]; \mathcal{H}_2\right)} \Vert\left(u_t(L, 0), v_t(L, 0), w_t(L, 0)\right) \Vert_{ \mathbb{R}^3}\\
 & \leq C_T \bigg( \Vert V^1_0 \Vert_{\mathcal{H}_2} + \Vert(f_1, f_2, f_3) \Vert_{[L^2(0,T)]^3} + \Vert(J_1, J_2, J_3) \Vert_{ \left(L^2\left(0,T;  \left( H_*^1(0,L) \right)^{*}\right)\right)^3} \\
 &\hspace{4cm}+\Vert\left(u_t(L, 0), v_t(L, 0), w_t(L, 0)\right) \Vert_{ \mathbb{R}^3} \bigg) \Vert W^{\tau} \Vert_{\mathcal{H}_2}.
\end{split}
\end{equation*}
Therefore, we have $\mathcal{R} \in \mathcal{L}\big(D(\mathcal{P});\mathbb{R}\big)$. By the Riesz representation theorem, it follows that there exists a unique $U \in D(\mathcal{P})$ satisfying \eqref{equa2}. Moreover, the property $U \in C\big([0, T] ; D(\mathcal{P}) \big)$ follows by an argument analogous to that in \cite{CF2019}, and thus the details are omitted.
\end{proof}

\section{Controllability results} \label{sec5}
In this section, we study some boundary controllability properties of the system \eqref{2bbm}.
We start with the following characterization of a control driving system \eqref{2bbm} to zero. This kind of result is already classic for dispersive systems (see, for instance, \cite{BAPA} and \cite{M}).

\begin{lemma}\label{charac} The initial data  $U_0 \in D\left(\mathcal{P}\right)  $
is controllable to zero in time $T > 0$ with controls $ f_i \in  L^2(0,T),$ for $i=1,2,3  $, if and only if
\begin{align}\label{equiv}
 &-\displaystyle \Bigg\langle  \left(\begin{array}{c}
 \rho_1 h_1 u_1 \\  -\rho_1 h_1 u_0 \\  \rho_3 h_3 v_1\\ -\rho_3 h_3 v_0 \\
 \rho h  w_1 \\ -\rho h  w_0\\
 - E_1 h_1 u(L, 0)\\
 -  E_3 h_3 v(L, 0)\\
 -k \alpha w(L, 0)\end{array}\right), \left(\begin{array}{c}
 \chi(0)\\ \chi_t(0) \\ \eta(0) \\ \eta_t(0)\\ \Theta(0) \\ \Theta_t(0) \\\chi_t(L, 0)\\ \eta_t(L, 0)\\ \Theta_t(L, 0) \end{array}\right) \Bigg\rangle_{  \mathcal{H}_2} \\
&-\left( \left( E_1 h_1 w_t(L, 0),  E_3 h_3 v_t(L, 0),  k \alpha w_t(L, 0)\right); \left( \chi(L, 0), \eta(L, 0), \Theta(L, 0) \right)\right)_{\mathbb{R}^3} \nonumber \\
& =  E_1 h_1\int^T_0 f_1(t) \chi(L, t)dt +  E_3 h_3\int^T_0 f_2(t)\eta(L, t) dt+ k \alpha \int^T_0 f_3(t)\Theta(L, t) dt\nonumber
\end{align}
for any solution of the adjoint system
\begin{equation}\label{back}
\left\{\begin{array}{ll} \rho_{1} h_1\chi_{tt}- E_1 h_1\chi_{xx}-k\left(  -\chi+\eta + \alpha\Theta_x\right)     =0, &x\in(0,L),\,\,\,
t>0,\\
\rho_{3} h_3\eta_{tt}-E_3 h_3\eta_{xx}+k\left(   -\chi+\eta + \alpha\Theta_x\right)
=0,  &x\in(0,L),\,\,\,
t>0,\\
\rho h \Theta_{tt} -E I \Theta_{xxxx}-\alpha k \left(   -\chi+\eta + \alpha\Theta_x\right)_x
=0, & x\in(0,L),\,\,\, t>0,\\
\chi(0, t)=\eta(0, t)= \Theta_{x}(0, t)=\Theta_{xxx}(0, t) =0, & t>0,\\
\Theta_{xx}(L, t)=\Theta_{xxx}(L, t) =0, & t>0,\\
\chi_{tt}(L, t)+\chi_x(L,t)=0, &
t>0,\\
\eta_{tt}(L, t)+\eta_{x}(L, t)= 0, &
t>0,\\
\Theta_{tt}(L, t) -\chi_x(L,t)+\eta(L,t)+\alpha\Theta(L, t) =0, &
t>0,\\
\left(  \chi, \eta, \Theta \right)  \left( x,T \right)  =\left(\chi^{T}_0, \eta^T_0, \Theta_0^T\right)(x), & x\in(0,L),\\
\left( \chi_{t}, \eta_{t}, \Theta_{t}\right)  \left(  x,T\right)  =\left(\chi^{T}_1, \eta^T_1, \Theta_1^T\right)(x), & x\in(0,L),\end{array}\right.
\end{equation}
with $(\chi_0^{T}, \chi_1^{T}, \eta_0^{T}, \eta_1^{T}, \Theta_0^{T}, \Theta_1^{T}, \chi_1^{T}(L), \eta_1^{T}(L), \Theta_1^{T}(L))^{\top} \in D(\mathcal{P}^*)$. 
\end{lemma}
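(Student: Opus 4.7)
The plan is to deduce \eqref{equiv} directly from the transposition identity \eqref{equa2} by specializing to $\tau = T$ and to \eqref{2bbm}, where the interior sources $J_i$ are absent. Since $\mathcal{P}^{\ast} = -\mathcal{P}$ generates a group of isometries on $\mathcal{H}_2$ (Proposition \ref{teogeneinfiA}), any terminal datum $W^T \in D(\mathcal{P}^{\ast})$ produces a unique global solution $W \in C([0,T]; D(\mathcal{P}^{\ast}))$ of the backward system \eqref{sistema3} at $\tau = T$, which, after the time reversal $t \mapsto T-t$, is exactly the solution of \eqref{back} with terminal data $W^T$. This identification of \eqref{back} with the backward adjoint \eqref{sistema3} is what makes \eqref{equa2} usable as a duality pairing in the present setting.

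For the necessity direction, suppose $f_1, f_2, f_3 \in L^2(0,T)$ drive $U_0$ to zero, so that $u(\cdot,T) = u_t(\cdot,T) = v(\cdot,T) = v_t(\cdot,T) = w(\cdot,T) = w_t(\cdot,T) = 0$ together with $u(L,T) = v(L,T) = w(L,T) = 0$ and the boundary velocities $u_t(L,T) = v_t(L,T) = w_t(L,T) = 0$. Every entry of the terminal vector appearing on the left-hand side of \eqref{equa2} then vanishes, so setting $J_i \equiv 0$ and moving the initial-state pairing and the $\mathbb{R}^3$-boundary-initial term from the right-hand side of \eqref{equa2} to the other side reproduces \eqref{equiv} term by term.

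For the sufficiency direction, let $U$ be the unique transposition solution associated with some candidate controls $f_1, f_2, f_3$ (existence guaranteed by the previous proposition), and assume that \eqref{equiv} holds for every $W^T \in D(\mathcal{P}^{\ast})$. Subtracting \eqref{equiv} from \eqref{equa2} at $\tau = T$ with $J_i \equiv 0$ cancels the initial-data pairing, the boundary-control integrals and the $\mathbb{R}^3$-boundary-initial term, leaving
\[
\left\langle \widetilde{U}(T),\, W^T \right\rangle_{\mathcal{H}_2} = 0, \qquad \forall\, W^T \in D(\mathcal{P}^{\ast}),
\]
where $\widetilde{U}(T)$ stands for the nine-entry terminal vector appearing on the left-hand side of \eqref{equa2}. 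Because $D(\mathcal{P}^{\ast}) = D(\mathcal{P})$ is dense in $\mathcal{H}_2$ (Lemma \ref{0inresolv} combined with Proposition \ref{teogeneinfiA}), this identity forces $\widetilde{U}(T) = 0$ in $\mathcal{H}_2$, which is exactly $U(T) = 0$ once the sign pattern and weights are read off.

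The main obstacle is clerical rather than analytical: one must match, entry by entry, the nine components of the inner products in \eqref{equa2} and \eqref{equiv}, keeping track of the physical constants $\rho_1 h_1$, $\rho_3 h_3$, $\rho h$, $E_1 h_1$, $E_3 h_3$ and $\alpha k$, of the alternating signs between the displacement and velocity coordinates, and of the distinction between the $L^2(0,L)$ pairings in the first six slots and the $\mathbb{R}^3$ pairings in the last three, the latter being forced by the dynamic boundary conditions at $x = L$. Beyond this bookkeeping, the argument needs no ingredient other than the transposition well-posedness of \eqref{2bbm} and the skew-adjointness of $\mathcal{P}$ established above.
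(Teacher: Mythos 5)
Your proposal is correct and follows essentially the route the paper intends: the paper's omitted proof is ``a direct consequence of integration by parts,'' and the transposition identity \eqref{equa2} you invoke at $\tau=T$ with $J_i\equiv 0$ is precisely that integration by parts in packaged form, so specializing it and using the density of $D(\mathcal{P}^{*})=D(\mathcal{P})$ in $\mathcal{H}_2$ for the sufficiency direction is the standard duality argument the authors have in mind. The bookkeeping caveats you raise (sign pattern, weights, and the $L^2$ versus $\mathbb{R}^3$ slots) are exactly where the only real work lies, and your treatment of them is adequate.
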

\begin{proof} The proof is a direct consequence of integration by parts and so we will omitted it.
\end{proof}

\begin{remark}\label{obs_des_obser}
It is well known that the variational equation \eqref{equiv} has a solution if there exists a positive
constant $C > 0$, such that the following observability inequality holds for any $$(\chi_0^{T}, \chi_1^{T}, \eta_0^{T}, \eta_1^{T}, \Theta_0^{T}, \Theta_1^{T}, \chi_1^{T}(L), \eta_1^{T}(L), \Theta_1^{T}(L))^{\top} \in D(\mathcal{P}^*)$$
\begin{equation}\label{DESIIMPORTANTE}
 \Vert W(0) \Vert^2_{\mathcal{H}_2} \leq C \int^T_0 \left( \left|\chi(L, t)\right|^2dt +  \left|\eta(L, t)\right|^2 + \left|\Theta(L, t)\right|^2 \right)dt,
\end{equation}where $W=\left( \chi, \chi_t, \eta, \eta_t, \Theta, \Theta_t,  \chi_t(L, t), \eta_t(L, t), \Theta_t(L,t) \right)^{\top} $ is solution of the adjoint system \eqref{back}.
\end{remark}

Now, we are in a position to prove the observability inequality \eqref{DESIIMPORTANTE}. We start with the following result.
\begin{proposition}
Let $U=(u,u_t, v,v_t, w,w_t, u_t(L,t), v_t(L,t), w_t(L,t))^{\top}$ the solution of the system \eqref{abs}. Then, for any $T>0$ and $s \in (\dfrac{1}{2}, 1)$, there exist a positive constant $C$ such that the following estimate hold
\begin{equation}\label{primerdesiobserder}
\begin{split}
\Vert U(t) \Vert^2_{\mathcal{H}_2}\leq C \int^T_0 \left( \left|u_t(L, t)\right|^2dt +  \left|v_t(L, t)\right|^2 + \left|w_t(L, t)\right|^2 \right)dt+C\Vert (u, v, w) \Vert^2_{L^{\infty}\left(0, T; [H^{s}(0, L)]^3\right)}.
\end{split}
\end{equation}
\end{proposition}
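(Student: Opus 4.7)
The plan is to combine the multiplier method with the energy-conservation identity \eqref{identi_gisometr} provided by Theorem~\ref{ex-hom}. Since the semigroup generated by $\mathcal{P}$ is a group of isometries on $\mathcal{H}_2$, one has $\|U(t)\|_{\mathcal{H}_2}\equiv\|U(0)\|_{\mathcal{H}_2}$ for every $t\ge 0$, and consequently
\[
T\,\|U(0)\|_{\mathcal{H}_2}^2=\int_0^T \|U(t)\|_{\mathcal{H}_2}^2\,dt.
\]
It therefore suffices to dominate the time-integrated squared norm by the right-hand side of \eqref{primerdesiobserder}, and then recover the pointwise-in-time inequality from the isometry.

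I would apply the multipliers $xu_x$, $xv_x$ and $xw_x$ to the first, second and third equations of \eqref{2bbm-hom}, respectively, and integrate over $(0,L)\times(0,T)$. After repeated integration by parts in $x$ and $t$, using the static boundary conditions $u(0)=v(0)=w(0)=w_x(0)=w_{xxx}(0)=w_{xx}(L)=w_{xxx}(L)=0$, the principal contributions reconstitute positive multiples of
\[
\int_0^T\!\!\int_0^L\bigl(u_t^2+u_x^2+v_t^2+v_x^2+w_t^2+w_{xx}^2\bigr)\,dx\,dt;
\]
in particular four integrations by parts in the beam term yield the clean identity $\int_0^T\!\!\int_0^L xw_x w_{xxxx}\,dx\,dt=\tfrac32\int_0^T\!\!\int_0^L w_{xx}^2\,dx\,dt$. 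The time-boundary residues of the form $\int_0^L xu_x u_t\,dx\big|_0^T$ and their analogues are dominated by $\|U(0)\|_{\mathcal{H}_2}^2$, while the spatial-boundary residues at $x=L$ furnish both the desired velocity traces $|u_t(L,\cdot)|^2,|v_t(L,\cdot)|^2,|w_t(L,\cdot)|^2$ and the Neumann/shear traces $u_x(L,\cdot),v_x(L,\cdot)$ and $\phi(L,\cdot):=-u(L,\cdot)+v(L,\cdot)+\alpha w_x(L,\cdot)$.

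Next, the dynamic boundary conditions of \eqref{2bbm-hom} with $f_i\equiv 0$, namely $u_x(L,t)=-u_{tt}(L,t)$, $v_x(L,t)=-v_{tt}(L,t)$ and $\phi(L,t)=-w_{tt}(L,t)$, allow one, after multiplying each identity by the corresponding velocity trace and integrating in $t$, to write
\[
\int_0^T u_x(L,t)u_t(L,t)\,dt=-\tfrac12\bigl(u_t^2(L,T)-u_t^2(L,0)\bigr),
\]
and analogously for $v$ and $w$; this converts every mixed boundary contribution into the observed kinetic traces, while the remaining quadratic residues such as $\int_0^T u_x^2(L,t)\,dt$ are dominated by the standard hidden-regularity trace for the wave/beam component together with the finite-dimensional boundary components $\Psi_4,\Psi_5,\Psi_6$ already controlled by $\|U(t)\|_{\mathcal{H}_2}$. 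The zero-order coupling $k(-u+v+\alpha w_x)$ produces cross terms $\int_0^T\!\!\int_0^L xu_x(-u+v+\alpha w_x)\,dx\,dt$ and its two analogues; these are estimated via Young's inequality and the Sobolev embedding $H^s(0,L)\hookrightarrow C([0,L])$ for $s>\tfrac12$ by $\varepsilon$ times the principal energy density plus $C_\varepsilon\|(u,v,w)\|_{L^\infty(0,T;[H^s]^3)}^2$; absorbing the $\varepsilon$-part on the left produces precisely the compact lower-order term appearing in \eqref{primerdesiobserder}.

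The main obstacle will be the careful bookkeeping of the numerous spatial-boundary residues produced by the fourth-order beam multiplier and the reduction of the Neumann-type traces $u_x(L,\cdot),v_x(L,\cdot)$ together with the shear trace $\phi(L,\cdot)$ to the observed velocity traces $|u_t(L,\cdot)|,|v_t(L,\cdot)|,|w_t(L,\cdot)|$. Matching these contributions against the three scalar dynamic boundary conditions without leaving any uncontrollable higher-order residue at $x=L$ is the delicate step that ultimately forces the appearance of the compact $H^s$ correction on the right-hand side of \eqref{primerdesiobserder}.
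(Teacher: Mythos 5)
Your overall skeleton coincides with the paper's: the multipliers $xu_x$, $xv_x$, $xw_x$, the reduction via the isometry \eqref{identi_gisometr} to estimating $\int_0^T\|U(t)\|_{\mathcal{H}_2}^2\,dt=T\|U_0\|_{\mathcal{H}_2}^2$, the computation $\int_0^L xw_xw_{xxxx}\,dx=\tfrac32\int_0^Lw_{xx}^2\,dx$, and the use of Young's inequality together with the embedding $H^s(0,L)\hookrightarrow C([0,L])$ for $s>\tfrac12$ to turn the zero-order coupling into the compact $H^s$ remainder. There are, however, two concrete points where your treatment departs from the paper's and leaves a gap. The first concerns the time-boundary residues $\rho_ih_i\int_0^L\bigl[xu_tu_x\bigr]_0^T\,dx$ and their analogues: you dominate them by $C\|U(0)\|_{\mathcal{H}_2}^2$ with $C$ independent of $T$ and then absorb. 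Since the left-hand side is exactly $\tfrac{T}{2}\|U_0\|_{\mathcal{H}_2}^2$, this absorption is only possible once $T$ exceeds a fixed threshold, whereas the proposition is asserted for \emph{every} $T>0$ (only the constant is allowed to depend on $T$). The paper instead splits these residues as $\varepsilon\cdot(\text{energy terms})+C_\varepsilon\|(u,v,w)\|_{[H^s]^3}^2$ in \eqref{EQ3P}, so the absorbed part carries an arbitrarily small constant and the conclusion is uniform in $T$; as written, your argument only yields \eqref{primerdesiobserder} for $T$ large.

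The second gap is in the boundary bookkeeping at $x=L$. The multiplier identity produces the \emph{square} $\tfrac{E_1h_1L}{2}\int_0^Tu_x^2(L,t)\,dt$ (see \eqref{ecua1_1_fordesiguaob1} and \eqref{total_suma_some}), not the cross term $\int_0^Tu_x(L,t)u_t(L,t)\,dt$ that you propose to integrate exactly using $u_x(L,t)=-u_{tt}(L,t)$; that cross term simply does not occur. For the square term the dynamic boundary condition gives $\int_0^Tu_x^2(L,t)\,dt=\int_0^Tu_{tt}^2(L,t)\,dt$, which is controlled neither by the observed quantity $\int_0^Tu_t^2(L,t)\,dt$ nor by $\|U\|_{\mathcal{H}_2}$: the $\mathcal{H}_2$-norm sees $\Psi_4=u_t(L,\cdot)$ but not its time derivative, so the appeal to ``hidden regularity together with the components $\Psi_4,\Psi_5,\Psi_6$'' does not close this term. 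The paper disposes of $\int_0^Tu_x^2(L,t)\,dt$, $\int_0^Tv_x^2(L,t)\,dt$ and the shear-trace products coming from the $w$-equation through the trace/Young/Sobolev estimate \eqref{EQ2P}, again splitting them into $\varepsilon$ times the energy plus a constant times the compact $[H^s]^3$ norm. You would need to supply an estimate of that type for these boundary residues before your proof is complete.
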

\begin{proof}
In order to obtain estimate \eqref{primerdesiobserder}, we multiply the first equation in \eqref{2bbm-hom} by $x u_x$ and integrate by parts on $(0, L)\times (0, T) $ to obtain
\begin{equation}\label{ecua1_1_fordesiguaob1}
\begin{split}
0 = & \rho_1 h_1 \int^L_0\left[ x u_t u_x \right]_0^Tdx -\dfrac{\rho_1 h_1 L}{2} \int^T_0 u^2_t(L, t)dt  + \dfrac{\rho_1 h_1}{2} \int^L_0 \int^T_0 u^2_tdt dx \\
& -\dfrac{E_1h_1L}{2} \int^T_0 u^2_x(L, t)dt  + \dfrac{E_1h_1}{2} \int^L_0 \int^T_0 u_x^2dt dx-k \int^L_0 \int^T_0 \left(   -u+v+\alpha w_{x}\right)x u_{x}dt dx.\end{split}
\end{equation}
Now, we multiply  the second equation in \eqref{2bbm-hom} by $x v_x$, the third one by $x w_x$, integrate by parts on $(0, L)\times (0, T) $ to get
\begin{equation}\label{ecua2_fordesiguaob1}
\begin{split}
 0 &= \int^L_0 \int^T_0 \left(   \rho_{3} h_3 v_{tt}-E_3 h_3 v_{xx}+k\left( -u+v+\alpha w_{x}\right)\right)x v_x dt dx \\
 & =  \rho_3 h_3 \int^L_0\left[ x v_t v_x \right]_0^Tdx -\dfrac{\rho_3 h_3 L}{2} \int^T_0 v^2_t(L, t)dt  + \dfrac{\rho_3 h_3}{2} \int^L_0 \int^T_0 v^2_tdt dx \\
& -\dfrac{E_3h_3 L}{2} \int^T_0 v^2_x(L, t)dt  + \dfrac{E_3 h_3}{2} \int^L_0 \int^T_0 v_x^2dt dx -k \int^L_0 \int^T_0 \left(   -u+v+\alpha w_{x}\right)x v_{x}dt dx,
\end{split}
\end{equation}
and
\begin{equation}\label{ecua3_fordesiguaob1}
\begin{split}
 0 &= \int^L_0 \int^T_0 \left(  \rho h w_{tt}+EI w_{xxxx}-\alpha k \left( -u+v+\alpha w_{x}\right)_x  \right)x w_x dt dx \\
 & =  \rho h \int^L_0\left[ x w_t w_x \right]_0^Tdx -\dfrac{ \rho h L}{2}  \int^T_0 w^2_{t}(L, t)dt + \dfrac{\rho h }{2}  \int^L_0\int^T_0 w^2_{t}dt dx\\
 &+ \dfrac{3 E I }{2}  \int^L_0\int^T_0 w^2_{xx}dt dx -k \alpha\int^T_0 L \left(  -u(L, t)+v(L, t)+\alpha w_x(L, t)\right)w_{x}(L, t)dt \\
 & +k \alpha \int^L_0 \int^T_0 \left(   -u+v+\alpha w_{x}\right)x w_{xx}dt dx + k \int^L_0 \int^T_0\left(   -u+v+\alpha w_{x}\right)^2 dt dx  \\
 & -k \int^L_0 \int^T_0\left(  -u+v+\alpha w_{x}\right)\left( -u+v\right) dt dx,
\end{split}
\end{equation}
respectively. A  straightforward computation shows that
\begin{equation}\label{I_0pri}
\begin{split}
 &k \int^L_0 \int^T_0 \left(  -u+v+\alpha w_{x}\right)\left(  -u+v+\alpha w_{x}\right)_x x dt dx \\
 &= \dfrac{kL}{2} \int^T_0 \left(  -u(L, t)+v(L, t)+\alpha w_x(L, t)\right)^2dt -\dfrac{k}{2}\int^L_0 \int^T_0 \left(  -u+v+\alpha w_{x}\right)^2 dt dx.
\end{split}
\end{equation}
Thus, adding the identities \eqref{ecua1_1_fordesiguaob1}-\eqref{ecua3_fordesiguaob1} and by taking into account \eqref{I_0pri}, it follows that
\begin{equation}\label{total_suma_some}
\begin{split}
&\dfrac{1}{2} \int^T_0 \Vert U(t) \Vert^2_{\mathcal{H}_2}dt \\
&= \dfrac{1}{2}\int^T_0 \left( ( \rho_1 h_1L+E_1 h_1)u^2_{t}(L, t) +(( \rho_3 h_3L+E_3 h_3)v^2_{t}(L, t)+ (\rho h L+\alpha k)w^2_{t}(L, t)\right)dt \\
&- \int^L_0\left[\rho_1 h_1 x u_t u_x + \rho_3 h_3 x v_t v_x +  \rho h  x w_t w_x \right]_0^Tdx + \dfrac{  L}{2}  \int^T_0 \left( E_1 h_1u^2_{x}(L, t)+E_3 h_3v^2_{x}(L, t)\right)dt  \\
& + k \int^L_0 \int^T_0\left(  -u+v+\alpha w_{x}\right)\left( -u+v\right) dt dx\\&+\dfrac{\alpha k L}{2}\int^T_0  \left(  -u(L, t)+v(L, t)+\alpha w_x(L, t)\right)w_{x}(L, t)dt  \\
& -\dfrac{kL}{2}\int^T_0  \left(  -u(L, t)+v(L, t)+\alpha w_x(L, t)\right)\left( -u(L, t)+v(L, t)\right)dt.
\end{split}
\end{equation}
Applying the Young's inequality, for any  $s \geq 0$ and  $\varepsilon>0$,   we have that
\begin{equation}\label{EQ1P}
\begin{split}
 &3k \int^L_0 \left(  -u+v+\alpha w_{x}\right)\left( -u+v\right)  dx \\
 &\leq  \varepsilon \left(  k\left\|( -u+v+\alpha w_{x})(\cdot, t)\right\|^2 + E_1 h_1\left\|u_x(\cdot, t) \right\|^2 + E_3 h_3\left\|v_x(\cdot, t) \right\|^2\right)\\
 &+ C_{\varepsilon,k} \left\|(u, v)(\cdot, t) \right\|_{[H^s(0, L)]^2}^2.
 \end{split}
\end{equation}

On the other hand, for $s \in (\dfrac{1}{2}, 1)$,   using Young's inequality and  from the Sobolev embedding (see, for instance, \cite{DPV,LiMa}), we obtain a  positive constant $C_{\varepsilon_1,k, E, L}$, such that
\begin{equation}\label{EQ2P}
\begin{split}
& \dfrac{L}{2}\left \vert\left(  E_1 h_1 u^2_{x}(L, t)+E_3 h_3 v^2_{x}(L, t) + \alpha k   \left(  -u(L, t)+v(L, t)+\alpha w_x(L, t)\right)w_{x}(L, t) \right. \right. \\
&\left.\left.-k   \left(  -u(L, t)+v(L, t)+\alpha w_x(L, t)\right)\left( -u(L, t)+v(L, t)\right) \right. \right \vert \\
& \leq  \varepsilon_1 \left(  k\left\|( -u+v+\alpha w_{x})(\cdot, t)\right\|^2 + E_1 h_1\left\|u_x(\cdot, t) \right\|^2 + E_3 h_3\left\|v_x(\cdot, t) \right\|^2\right)  \\
&+ C_{\varepsilon_1,k, E, L}\left\| (u, v, w)(\cdot, t) \right\|_{[H^s(0, L)]^3}^2,
\end{split}
\end{equation}
for any $\varepsilon_1>0$. Now, for all $t \in [0, T]$ and applying once more Young's inequality,  we get that
\begin{equation}\label{EQ3P}
\begin{split}
&\left \vert\int^L_0\left(\rho_1 h_1 x u_t u_x + \rho_3 h_3 x v_t v_x +  \rho h  x w_t w_x \right)(x,t)dx\right\vert \\
& \leq \varepsilon_2 \left( \rho_1 h_1 \Vert u_t(\cdot, t) \Vert^2 + \rho_3 h_3 \Vert v_t(\cdot, t) \Vert^2 + \rho h \Vert w_t(\cdot, t) \Vert^2 \right) + C_{\varepsilon_2,\rho, L}\left\| (u, v, w)(\cdot, t) \right\|_{[H^s(0, L)]^3}^2,
\end{split}
\end{equation}
for any $\varepsilon_2>0$, $s \in (\dfrac{1}{2}, 1)$ and some constant $C_{\varepsilon_2,\rho_1, \rho_2, L}>0$.
By combining \eqref{total_suma_some}-\eqref{EQ3P} and taking $\varepsilon=\varepsilon_1=\varepsilon_2$, it follows that
\begin{align*}
\dfrac{1}{2} \int^T_0 \Vert U(t) \Vert^2_{\mathcal{H}_2}dt \leq& 2\varepsilon \int^T_0 \Vert U(t) \Vert^2_{\mathcal{H}_2}dt + C\int^T_0 \left( u^2_{t}(L, t) +v^2_{t}(L, t)+ w^2_{t}(L, t)\right)dt\\
& +C_{\varepsilon}\int^T_0 \left\| (u, v, w)(\cdot, t) \right\|_{[H^s(0, L)]^3}^2dt.
\end{align*}
From estimative above and we choose $\varepsilon$ small enough  so that $0<\varepsilon<\dfrac{1}{4}$, we immediately deduce
\begin{equation}\label{prefinalesti_impor0}
\begin{split}
\int^T_0 \Vert U(t) \Vert^2_{\mathcal{H}_2}dt \leq &C\int^T_0 \left( u^2_{t}(L, t) +v^2_{t}(L, t)+ w^2_{t}(L, t)\right)dt+C\Vert (u, v, w) \Vert^2_{L^{\infty}\left(0, T; [H^{s}(0, L)]^3\right)},
\end{split}
\end{equation}
 for any  $s \in (\dfrac{1}{2}, 1).$ Finally, using \eqref{identi_gisometr} and the inequality \eqref{prefinalesti_impor0}, we infer the estimative \eqref{ecua1_1_fordesiguaob1}.  This finishes the proof.
\end{proof}

With the previous results in hand, we can establish an intermediary observability inequality as follows.
\begin{proposition}\label{teoimportantepaperI}
Let $U=(u,u_t, v,v_t, w,w_t, u_t(L,t), v_t(L,t), w_t(L,t))^{\top}$ the solution of the system \eqref{abs}. Then,  there exists a positive constant $C$ such that the following estimate holds
\begin{align}\label{seconddesiobserder2}
\Vert U_0 \Vert^2_{\mathcal{H}_2} \leq C \int^T_0 \left( \left|u_t(L, t)\right|^2dt +  \left|v_t(L, t)\right|^2 + \left|w_t(L, t)\right|^2 \right)dt.
\end{align}
\end{proposition}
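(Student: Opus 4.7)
The plan is to prove the observability inequality \eqref{seconddesiobserder2} from the preliminary estimate \eqref{primerdesiobserder} by a classical compactness--uniqueness argument. First, since the semigroup $S(t)$ generated by $\mathcal{P}$ is a group of isometries on $\mathcal{H}_2$ (Proposition \ref{teogeneinfiA}), identity \eqref{identi_gisometr} gives $\|U(t)\|_{\mathcal{H}_2}=\|U_0\|_{\mathcal{H}_2}$ for all $t$. Substituting this into \eqref{primerdesiobserder} (after integrating in time if needed, or evaluating at any fixed time) yields
\begin{equation*}
\|U_0\|^2_{\mathcal{H}_2} \leq C \int_0^T \bigl(|u_t(L,t)|^2+|v_t(L,t)|^2+|w_t(L,t)|^2\bigr)\,dt + C\,\|(u,v,w)\|^2_{L^\infty(0,T;[H^s(0,L)]^3)},
\end{equation*}
with $s\in(1/2,1)$, and the task reduces to absorbing the compact lower-order term on the right.

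I would proceed by contradiction. Suppose \eqref{seconddesiobserder2} fails. Then there exists a sequence $\{U_0^n\}\subset D(\mathcal{P})$ with $\|U_0^n\|_{\mathcal{H}_2}=1$ whose associated solutions $U^n=S(\cdot)U_0^n$ satisfy
\begin{equation*}
\int_0^T \bigl(|u_t^n(L,t)|^2+|v_t^n(L,t)|^2+|w_t^n(L,t)|^2\bigr)\,dt \longrightarrow 0.
\end{equation*}
Up to a subsequence, $U_0^n\rightharpoonup U_0^\ast$ weakly in $\mathcal{H}_2$, and by continuity of the flow $U^n\rightharpoonup U^\ast$ in $L^\infty(0,T;\mathcal{H}_2)$, where $U^\ast$ is the solution of \eqref{abs} with initial datum $U_0^\ast$. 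The compact embedding $H^1_*(0,L)\hookrightarrow H^s(0,L)$ for $s<1$ then yields, along a further subsequence, strong convergence $(u^n,v^n,w^n)\to(u^\ast,v^\ast,w^\ast)$ in $L^\infty(0,T;[H^s(0,L)]^3)$.

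The decisive step is a unique continuation argument. By the weak convergence of the boundary traces, the limit solution satisfies $u_t^\ast(L,t)=v_t^\ast(L,t)=w_t^\ast(L,t)=0$ on $(0,T)$. Exploiting the dynamic boundary conditions $u_{tt}(L,t)+u_x(L,t)=0$, $v_{tt}(L,t)+v_x(L,t)=0$, and $w_{tt}(L,t)-u(L,t)+v(L,t)+\alpha w_x(L,t)=0$, these vanishing velocity traces force the overdetermined data $u_x^\ast(L,t)=v_x^\ast(L,t)=0$ and $-u^\ast(L,t)+v^\ast(L,t)+\alpha w_x^\ast(L,t)=0$ on $(0,T)$. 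Together with the clamped boundary conditions at $x=0$, I would invoke a Holmgren-type uniqueness theorem tailored to the coupled Rao--Nakra system, using the analyticity in time of the solution that is inherited from the skew-adjoint generator $\mathcal{P}$ (equivalently, a spectral argument on the finite-dimensional kernel guaranteed by the compact embedding), to conclude that $U^\ast\equiv 0$.

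This unique continuation is the main obstacle: the coupling between two wave equations and an Euler--Bernoulli beam equation, together with the dynamic boundary conditions at $x=L$, places the problem outside the scope of classical single-equation Holmgren results, so a careful spectral or microlocal analysis adapted to the sandwich beam is required. Once it is secured, one has $U_0^\ast=0$, whence $(u^n,v^n,w^n)\to 0$ strongly in $L^\infty(0,T;[H^s(0,L)]^3)$. Both terms on the right-hand side of the strengthened inequality then vanish in the limit, contradicting $\|U_0^n\|_{\mathcal{H}_2}^2=1$ and proving \eqref{seconddesiobserder2}.
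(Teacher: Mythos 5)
Your overall strategy coincides with the paper's: both proofs run the classical compactness--uniqueness scheme, starting from the preliminary estimate \eqref{primerdesiobserder}, extracting a normalized sequence with vanishing boundary velocity traces, using the compact embedding $H^{1}(0,L)\hookrightarrow H^{s}(0,L)$ with $s\in(\tfrac12,1)$ to upgrade to strong convergence, and reducing the contradiction to a unique continuation statement for the limit solution with the overdetermined boundary data $u_x(L,t)=v_x(L,t)=0$ and $-u(L,t)+v(L,t)+\alpha w_x(L,t)=0$ obtained from the dynamic boundary conditions. (Your variant of the final step --- identifying the limit as zero first and then letting both right-hand terms of \eqref{primerdesiobserder} vanish --- is equivalent to the paper's route of showing $U^n$ is Cauchy in $L^{\infty}(0,T;\mathcal{H}_2)$ and deriving $\Vert U(t)\Vert_{\mathcal{H}_2}=1$ for the limit.)

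The genuine gap is the unique continuation step itself, which you explicitly leave open (``the main obstacle \dots once it is secured''). Without it the contradiction is not reached and the proposition is not proved. The paper closes this gap with a concrete two-stage argument that you should supply. First, since the limit $U$ solves the homogeneous system with the extra vanishing traces at $x=L$ listed above, the time-differentiated triple $(u_{tt},v_{tt},w_{tt})$ satisfies the same constant-coefficient system with fully homogeneous (and now overdetermined) boundary data; extending it by zero to a larger interval $(-a,a)\supset(0,L)$ produces a distributional solution of the system on $(-a,a)\times(0,T)$ that vanishes on an open set, and Holmgren's uniqueness theorem for systems with analytic coefficients forces $(u_{tt},v_{tt},w_{tt})\equiv(0,0,0)$. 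Second, the limit then solves the \emph{stationary} elliptic system \eqref{2bbm_nonhomo_Ulimi_deridoscero}, and the Lax--Milgram argument already carried out in Lemma \ref{0inresolv} shows its only solution is the trivial one, contradicting $\Vert U(t)\Vert_{\mathcal{H}_2}=1$. Your suggested alternatives (analyticity in time of the trajectory, or a spectral decomposition of a finite-dimensional kernel) could in principle be developed into a proof, but as written they are only sketched intentions, not arguments; in particular you never exhibit the finite-dimensional invariant subspace nor verify the spectral non-degeneracy that such an argument would require for this coupled wave--wave--Euler--Bernoulli system with dynamic boundary conditions.
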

\begin{proof}
Let us argue by contradiction, following the so-called \emph{compactness-uniqueness} argument (see, for instance, \cite{ZUA}).
Suppose that \eqref{seconddesiobserder2} does not hold.
Hence, there exists a sequence $U^n_0 \in \mathcal{H}_2$ such that
\begin{align}\label{noriguaconver_1}
\Vert U^n(t) \Vert^2_{\mathcal{H}_2} = \Vert U^n_0 \Vert^2_{\mathcal{H}_2} = 1,
\quad \text{for all } t \in [0,T],
\end{align}
and
\begin{align}\label{dericonver_2}
\int_0^T \!\left(
\left|u^n_t(L, t)\right|^2 +
\left|v^n_t(L, t)\right|^2 +
\left|w^n_t(L, t)\right|^2
\right) dt
\longrightarrow 0,
\quad \text{as } n \rightarrow \infty.
\end{align}
Here, for each $n \in \mathbb{N}$, the function
\[
U^n = \big(u^n, u^n_t, v^n, v^n_t, w^n, w^n_t, u^n_t(L,t), v^n_t(L,t), w^n_t(L,t)\big)^{\top}
\]
solves the following system:
\begin{equation}\label{abswithn}
\left\{
\begin{array}{l}
U^n_t(t) = \mathcal{P} U^n(t), \\[0.3em]
U^n(0) = U^n_0.
\end{array}
\right.
\end{equation}
From \eqref{noriguaconver_1} and the well-posedness results, we obtain the following uniform bounds:
\begin{equation}\label{paracompac_1}
\left|
\begin{array}{lll}
(u^n, v^n, w^n)  & \text{bounded in} & L^{\infty}\!\left(0, T; [H^{1}(0, L)]^3\right), \\[0.3em]
(u^n_t, v^n_t, w^n_t) & \text{bounded in} & L^{\infty}\!\left(0, T; [L^{2}(0, L)]^3\right).
\end{array}
\right.
\end{equation}

Once we have the embedding
\[
H^{1}(0, L)\hookrightarrow H^s(0,L) \hookrightarrow L^{2}(0, L),
\]
and since the embedding $H^{1}(0, L)\hookrightarrow H^s(0,L)$ is compact for all $s \in \left(\tfrac{1}{2}, 1\right)$, from \eqref{paracompac_1} and the classical compactness results given by \cite[Corollary~4]{SIMO}, we can extract a subsequence of $(u^n, v^n, w^n)$, still denoted by $(u^n, v^n, w^n)$, such that
\begin{align}\label{firsconverimp}
(u^n, v^n, w^n) \longrightarrow (u, v, w) \quad \text{strongly in} \quad L^{\infty}\!\left(0, T; [H^{s}(0, L)]^3\right).
\end{align}
Thus, from \eqref{primerdesiobserder}, \eqref{dericonver_2}, and \eqref{firsconverimp}, we deduce that $(U^n)$ is a Cauchy sequence in $L^{\infty}\!\left(0, T; \mathcal{H}_2\right)$. Therefore,
\begin{align*}
U^n \longrightarrow U \quad \text{strongly in} \quad L^{\infty}\!\left(0, T; \mathcal{H}_2\right),
\end{align*}
and by \eqref{noriguaconver_1},
\begin{align}\label{nor1soluU}
\Vert U(t) \Vert^2_{\mathcal{H}_2} = 1, \quad \text{for all } t \in [0,T].
\end{align}

Also,
\begin{align}\label{cerofronteraU}
0 &= \liminf_{n \rightarrow 0} \left\{ \int^T_0 \!\!\left(
\left|u^n_t(L, t)\right|^2 +
\left|v^n_t(L, t)\right|^2 +
\left|w^n_t(L, t)\right|^2
\right) dt \right\} \\
&\geq \int^T_0 \!\!\left(
\left|u_t(L, t)\right|^2 +
\left|v_t(L, t)\right|^2 +
\left|w_t(L, t)\right|^2
\right) dt. \nonumber
\end{align}
From \eqref{cerofronteraU}, we have that the limit
\[
U = \big(u, u_t, v, v_t, w, w_t, u_t(L,t), v_t(L,t), w_t(L,t)\big)^{\top}
\]
satisfies the linear problem
\begin{equation}\label{2bbm_nonhomo_Ulimi}
\left\{
\begin{array}{ll}
\rho_{1 } h_1 u_{tt}-E_1h_1u_{xx}-k\!\left(  -u+v+\alpha w_{x}\right)   =0, & x\in(0,L),\, t\in (0, T),\\[0.5em]
\rho_{3} h_3 v_{tt}-E_3 h_3 v_{xx}+k\!\left( -u+v+\alpha w_{x}\right)   =0,  & x\in(0,L),\, t \in (0, T),\\[0.5em]
\rho h w_{tt}+EI w_{xxxx}-\alpha k \!\left( -u+v+\alpha w_{x}\right)_x  =0, & x\in(0,L),\, t\in (0, T),\\[0.5em]
u(0,t)=v(0,t)=w_x(0, t)=w_{xxx}(0,t)=0, & t\in (0, T),\\[0.5em]
w_{xx}(L,t)=w_{xxx}(L,t)=0, & t\in (0, T),\\[0.5em]
u_{x}(L, t)=0, \quad v_x(L, t)=0, & t\in (0, T),\\[0.5em]
-u(L,t)+v(L, t)+\alpha w_x(L,t)=0, & t\in (0, T).
\end{array}
\right.
\end{equation}

Let $( \hat{u}, \hat{v}, \hat{w})$ denote the extension by zero of $(u_{tt}, v_{tt}, w_{tt})$ for $x \in (-a,a)\setminus(0,L)$, where $(-a,a)\supset (0,L)$ is an interval. Then, $( \hat{u}, \hat{v}, \hat{w})$ solves, in $\mathcal{D}'\big((0, L)\times (0, T)\big)$, the following system:
\begin{equation}\label{bbm_nonhomo_extensionessolu}
\left\{\begin{array}{ll}
\rho_{1 } h_1 \hat{u}_{tt}-E_1h_1\hat{u}_{xx}-k\left(  -\hat{u}+\hat{v}+\alpha \hat{w}_{x}\right)   =0, &x\in(0,L),\,\,\,
t\in (0, T)\\
\rho_{3} h_3 \hat{v}_{tt}-E_3 h_3 \hat{v}_{xx}+k\left( -\hat{u}+\hat{v}+\alpha \hat{w}_{x}\right)
=0,  &x\in(0,L),\,\,\,
t \in (0, T)\\
\rho h \hat{w}_{tt}+EI \hat{w}_{xxxx}-\alpha k \left( -\hat{u}+\hat{v}+\alpha \hat{w}_{x}\right)_x
=0, & x\in(0,L),\,\,\, t\in (0, T)\\
\hat{u}(0,t)=\hat{v}(0,t)=\hat{w}_x(0, t)=\hat{w}_{xxx}(0,t) =0, & t\in (0, T)\\
\hat{w}_{xx}(L,t)=\hat{w}_{xxx}(L,t) =0, & t\in (0, T)\\
\hat{u}_{x}(L, t)= 0, &
t\in (0, T)\\
 \hat{v}_x(L, t) =0, &
t\in (0, T)\\
-\hat{u}(L,t)+\hat{v}(L, t)+\alpha \hat{w}_x(L,t)=0, &
t\in (0, T)\\
( \hat{u}, \hat{v}, \hat{w}) =(0, 0, 0), & x \in (-a, 0] \cup [L, a), \ t\in (0, T),
\end{array}\right.
\end{equation} Then, by Holmgren’s Uniqueness Theorem \cite{Holmgren}, $( \hat{u}, \hat{v}, \hat{w}) \equiv (0, 0, 0) $ in $(-a, a)\times(0,T)$. Hence, from \eqref{2bbm_nonhomo_Ulimi}, we have that  $U=(u,u_t, v,v_t, w,w_t, u_t(L,t), v_t(L,t), w_t(L,t))^{\top}$ satisfies
\begin{equation}\label{2bbm_nonhomo_Ulimi_deridoscero}
\left\{\begin{array}{ll}
-E_1h_1u_{xx}-k\left(  -u+v+\alpha w_{x}\right)   =0, &x\in(0,L),\,\,\,
t\in (0, T)\\
-E_3 h_3 v_{xx}+k\left( -u+v+\alpha w_{x}\right)
=0,  &x\in(0,L),\,\,\,
t \in (0, T)\\
EI w_{xxxx}-\alpha k \left( -u+v+\alpha w_{x}\right)_x
=0, & x\in(0,L),\,\,\, t\in (0, T)\\
u(0,t)=v(0,t)=w_x(0, t)=w_{xxx}(0,t) =0, & t\in (0, T)\\
w_{xx}(L,t)=w_{xxx}(L,t) =0, & t\in (0, T)\\
u_{x}(L, t)= 0, &
t\in (0, T)\\
 v_x(L, t) =0, &
t\in (0, T)\\
-u(L,t)+v(L, t)+\alpha w_x(L,t)=0, &
t\in (0, T).\end{array}\right.
\end{equation}We infer, from the proof of Lemma \ref{0inresolv}, the unique solution of \eqref{2bbm_nonhomo_Ulimi_deridoscero} is given by $U=0$, and this
 contradicts \eqref{nor1soluU}. Then, necessarily, \eqref{seconddesiobserder2} holds. This completes the proof of the Theorem.
\end{proof}

\begin{remark}\label{directinequali}
From \eqref{identi_gisometr}, we directly verify that
\begin{align}\label{direcdirec}
\int_0^T \!\left(
\left|u_t(L, t)\right|^2 +
\left|v_t(L, t)\right|^2 +
\left|w_t(L, t)\right|^2
\right) dt
\leq C \Vert U_0 \Vert^2_{\mathcal{H}_2}.
\end{align}
\end{remark}

We are now in a position to prove the second main result of this section, namely, the observability inequality that provides a positive answer to the control problem $P_3$.

\begin{theorem}\label{teoprindesiusados}
For any $U_0 \in D(\mathcal{P})$, let
\[
U = (u, u_t, v, v_t, w, w_t, u_t(L,t), v_t(L,t), w_t(L,t))^{\top}
\]
be the solution of the system \eqref{abs}. Then, there exists a positive constant $C$ such that the following estimate holds:
\begin{align}\label{thirdesiobserder3}
\Vert U_0 \Vert^2_{\mathcal{H}_2}
\leq
C \int_0^T \!\left(
\left|u(L, t)\right|^2 +
\left|v(L, t)\right|^2 +
\left|w(L, t)\right|^2
\right) dt.
\end{align}
\end{theorem}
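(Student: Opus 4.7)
The plan is to bootstrap from the observability inequality \eqref{seconddesiobserder2}, which controls $\Vert U_0\Vert^2_{\mathcal{H}_2}$ by the $L^2(0,T)$-norms of $u_t(L,\cdot), v_t(L,\cdot), w_t(L,\cdot)$, to the weaker boundary observation in \eqref{thirdesiobserder3}. The bridge between the two will be integration by parts in time, exploiting the dynamic boundary conditions $u_{tt}(L,t)+u_x(L,t)=0$, $v_{tt}(L,t)+v_x(L,t)=0$, and $w_{tt}(L,t)-u(L,t)+v(L,t)+\alpha w_x(L,t)=0$, combined with a compactness–uniqueness argument in the spirit of Proposition~\ref{teoimportantepaperI}. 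I would argue by contradiction: assume there is a sequence $U_0^n \in D(\mathcal{P})$ with $\Vert U_0^n\Vert_{\mathcal{H}_2}=1$ and $I_n := \int_0^T(|u^n(L,t)|^2+|v^n(L,t)|^2+|w^n(L,t)|^2)\,dt \to 0$.

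First I would gather uniform bounds for this sequence. By the conservation identity \eqref{identi_gisometr}, $\Vert U^n(t)\Vert_{\mathcal{H}_2}=1$ on $[0,T]$. The $\mathbb{R}^3$-component of the $\mathcal{H}_2$-norm yields uniform $L^\infty(0,T)$-bounds on $u_t^n(L,\cdot), v_t^n(L,\cdot), w_t^n(L,\cdot)$, and the Sobolev embedding $H^1_*(0,L)\hookrightarrow L^\infty(0,L)$ does the same for the traces $u^n(L,\cdot), v^n(L,\cdot)$; the embedding $H^2_*(0,L)\hookrightarrow W^{1,\infty}(0,L)$ provides in addition a uniform $L^\infty$-bound on $w^n(L,\cdot)$ and $w_x^n(L,\cdot)$. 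Consequently $u^n(L,\cdot), v^n(L,\cdot), w^n(L,\cdot)$ lie uniformly in $W^{1,\infty}(0,T)$, and by Arzel\`a--Ascoli a subsequence converges in $C^0([0,T])$; the hypothesis $I_n\to 0$ then forces uniform convergence to zero. In particular the end-point values $u^n(L,0), u^n(L,T), v^n(L,0), v^n(L,T), w^n(L,0), w^n(L,T)$ all tend to zero. A suitable rearrangement of the multiplier identity \eqref{total_suma_some}, together with energy conservation, yields $\Vert u_x^n(L,\cdot)\Vert_{L^2(0,T)}, \Vert v_x^n(L,\cdot)\Vert_{L^2(0,T)}\leq C$.

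Next I would integrate by parts in $t$, using the dynamic boundary conditions, to obtain
\begin{align*}
\int_0^T |u_t^n(L,t)|^2\,dt &= \bigl[u^n(L)u_t^n(L)\bigr]_0^T + \int_0^T u^n(L,t)u_x^n(L,t)\,dt,\\
\int_0^T |v_t^n(L,t)|^2\,dt &= \bigl[v^n(L)v_t^n(L)\bigr]_0^T + \int_0^T v^n(L,t)v_x^n(L,t)\,dt,\\
\int_0^T |w_t^n(L,t)|^2\,dt &= \bigl[w^n(L)w_t^n(L)\bigr]_0^T - \int_0^T w^n(L)\bigl[u^n(L)-v^n(L)-\alpha w_x^n(L)\bigr]dt.
\end{align*}
Each bracket $[\,\cdot\,]_0^T$ is the product of a factor tending to zero (a trace of $u^n,v^n,w^n$ at $L$ evaluated at $t=0$ or $t=T$) with a uniformly bounded one, so those contributions are $o(1)$. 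Each integral on the right is controlled by Cauchy--Schwarz as $\Vert \cdot(L)\Vert_{L^2(0,T)}\cdot\Vert \cdot_x(L)\Vert_{L^2(0,T)}$ (with $w_x^n(L,\cdot)$ estimated by its uniform $L^\infty$-bound), which is of order $\sqrt{I_n}$ times a bounded factor and therefore also tends to zero. Hence $\int_0^T(|u_t^n(L)|^2+|v_t^n(L)|^2+|w_t^n(L)|^2)\,dt\to 0$, and invoking \eqref{seconddesiobserder2} forces $\Vert U_0^n\Vert_{\mathcal{H}_2}^2\to 0$, contradicting $\Vert U_0^n\Vert_{\mathcal{H}_2}=1$.

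The main technical obstacle will be securing the uniform $L^2(0,T)$-bound on the normal derivative traces $u_x^n(L,\cdot), v_x^n(L,\cdot)$ solely from $\Vert U_0^n\Vert_{\mathcal{H}_2}=1$, since these traces are at the borderline of the natural regularity of solutions in $\mathcal{H}_2$. The plan is to isolate the positive boundary quantities $\tfrac{LE_1h_1}{2}\int_0^T u_x^2(L)\,dt$ and $\tfrac{LE_3h_3}{2}\int_0^T v_x^2(L)\,dt$ in \eqref{total_suma_some}, estimate every remaining interior, trace, and coupling contribution by $\Vert U_0^n\Vert_{\mathcal{H}_2}^2$ using standard trace inequalities, Young's inequality, and the conservation of energy, and so deduce the required bound. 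Once this hidden regularity at $x=L$ is in place, the integration-by-parts reduction described above closes the argument.
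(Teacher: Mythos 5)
Your argument is correct, but it follows a genuinely different route from the paper. The paper's proof is a two-line ``shift'' argument: it sets $\tilde U_0=\mathcal{P}^{-1}U_0$ (using Lemma \ref{0inresolv}), observes that the corresponding solution satisfies $(\tilde u_t,\tilde v_t,\tilde w_t)=(u,v,w)$, and applies Proposition \ref{teoimportantepaperI} to $\tilde U$, so that the velocity traces of $\tilde U$ at $x=L$ become exactly the position traces of $U$; the inequality \eqref{thirdesiobserder3} then follows once $\Vert U_0\Vert_{\mathcal{H}_2}\le C\Vert\tilde U_0\Vert_{\mathcal{H}_2}$ is accepted. You instead run a second compactness--uniqueness argument directly on $U$: normalize $\Vert U_0^n\Vert_{\mathcal{H}_2}=1$, use \eqref{identi_gisometr} and the Sobolev/trace bounds to get $W^{1,\infty}(0,T)$ equicontinuity of the boundary traces (hence uniform convergence to zero of $u^n(L,\cdot),v^n(L,\cdot),w^n(L,\cdot)$ and of their endpoint values), extract the hidden regularity $\Vert u_x^n(L,\cdot)\Vert_{L^2(0,T)}+\Vert v_x^n(L,\cdot)\Vert_{L^2(0,T)}\le C$ from the multiplier identity \eqref{total_suma_some}, and then integrate by parts in time through the dynamic boundary conditions to show $\int_0^T(|u_t^n(L)|^2+|v_t^n(L)|^2+|w_t^n(L)|^2)\,dt\to 0$, contradicting \eqref{seconddesiobserder2}. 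Each of these steps checks out: the rearrangement of \eqref{total_suma_some} does isolate the positive terms $\tfrac{LE_1h_1}{2}\int_0^T u_x^2(L)\,dt$ and $\tfrac{LE_3h_3}{2}\int_0^T v_x^2(L)\,dt$ against quantities controlled by the conserved energy, and the contradiction framework is genuinely needed to kill the endpoint brackets $[u^n(L)u_t^n(L)]_0^T$, which cannot be absorbed in a direct estimate. What each approach buys: the paper's route is much shorter and reuses Proposition \ref{teoimportantepaperI} verbatim, but its last step --- deducing $\Vert U_0\Vert_{\mathcal{H}_2}\le C\Vert\tilde U_0\Vert_{\mathcal{H}_2}$ for $U_0=\mathcal{P}\tilde U_0$ with $\mathcal{P}$ unbounded --- is the delicate point and is only justified there by an appeal to the Riesz representation theorem; your route is longer and yields no better constant (both are non-constructive through compactness--uniqueness), but it works entirely at the level of $U$ itself, makes the mechanism visible (the dynamic boundary conditions convert $u_{tt}(L)$ into $-u_x(L)$, whose trace is controlled by hidden regularity), and avoids the inversion of $\mathcal{P}$ altogether.
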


\begin{proof}
In order to prove inequality \eqref{thirdesiobserder3}, we consider the auxiliary problem
\begin{equation}\label{abssombrero}
\left\{
\begin{array}{l}
\tilde{U}_t = \mathcal{P} \tilde{U}, \\[0.3em]
\tilde{U}(0) = \tilde{U}_0,
\end{array}
\right.
\end{equation}
where $\tilde{U}_0 = \mathcal{P}^{-1} U_0$ (the existence of $\mathcal{P}^{-1}$ is guaranteed by Lemma \ref{0inresolv}). Thus, from Proposition \ref{teoimportantepaperI}, the solution
\[
\tilde{U} = (\tilde{u}, \tilde{u}_t, \tilde{v}, \tilde{v}_t, \tilde{w}, \tilde{w}_t, \tilde{u}_t(L,t), \tilde{v}_t(L,t), \tilde{w}_t(L,t))^{\top}
\]
of system \eqref{abssombrero} satisfies
\begin{align}\label{tresdesiobserderprima}
\Vert \tilde{U}_0 \Vert^2_{\mathcal{H}_2}
\leq
C \int_0^T \!\left(
\left|\tilde{u}_t(L, t)\right|^2 +
\left|\tilde{v}_t(L, t)\right|^2 +
\left|\tilde{w}_t(L, t)\right|^2
\right) dt,
\end{align}
for some constant $C > 0$.

On the other hand, it is straightforward to see that $\tilde{U}_t = U$. Hence,
\begin{align}\label{equalderiandwxis}
(\tilde{u}_t, \tilde{v}_t, \tilde{w}_t) = (u, v, w).
\end{align}

Finally, combining \eqref{tresdesiobserderprima}, \eqref{equalderiandwxis}, and applying the Riesz representation theorem, we deduce that
\begin{align*}
\Vert U_0 \Vert^2_{\mathcal{H}_2}
&\leq C \Vert \tilde{U}_0 \Vert^2_{\mathcal{H}_2} \\
&\leq C \int_0^T \!\left(
\left|\tilde{u}_t(L, t)\right|^2 +
\left|\tilde{v}_t(L, t)\right|^2 +
\left|\tilde{w}_t(L, t)\right|^2
\right) dt \\
&= C \int_0^T \!\left(
\left|u(L, t)\right|^2 +
\left|v(L, t)\right|^2 +
\left|w(L, t)\right|^2
\right) dt,
\end{align*}
and the proof is complete.
\end{proof}

A direct consequence of the previous theorem is the following.

\begin{corollary}\label{finaldesforadforcontrolteo}
Let
\[
W = (\chi, \chi_t, \Theta, \Theta_t, \eta, \eta_t, \chi_t(L, t), \eta_t(L, t), \Theta_t(L, t))^{\top}
\]
be the solution of the adjoint system \eqref{back}. Then, there exists a constant $C > 0$ such that the following observability inequality holds:
\begin{equation}\label{DESIIMPORTANTE_colorarioprobado}
\Vert W(0) \Vert^2_{\mathcal{H}_2}
\leq
C \int_0^T \!\left(
\left|\chi(L, t)\right|^2 +
\left|\eta(L, t)\right|^2 +
\left|\Theta(L, t)\right|^2
\right) dt.
\end{equation}
\end{corollary}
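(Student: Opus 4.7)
The plan is to reduce the desired observability estimate for the adjoint system to Theorem \ref{teoprindesiusados} via a time-reversal argument combined with the group-of-isometries property established in Proposition \ref{teogeneinfiA}.

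First, I would observe that the governing PDEs in \eqref{back}, together with all of the dynamic boundary conditions, are of second order in time. Consequently, the transformation $t \mapsto T-t$ preserves the form of both the equations and the boundary conditions. Defining $\tilde\chi(x,t) := \chi(x,T-t)$, $\tilde\eta(x,t) := \eta(x,T-t)$, $\tilde\Theta(x,t) := \Theta(x,T-t)$, and assembling the associated vector $\tilde W$, one checks that $\tilde W$ solves the abstract Cauchy problem \eqref{abs} (up to sign flips in the first-time-derivative components, which are immaterial for the $\mathcal{H}_2$-norm), with initial datum $\tilde W(0) = W(T)$.

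Next, applying Theorem \ref{teoprindesiusados} to $\tilde W$ yields
\begin{equation*}
\|\tilde W(0)\|^2_{\mathcal{H}_2} \leq C \int_0^T \left( |\tilde\chi(L,t)|^2 + |\tilde\eta(L,t)|^2 + |\tilde\Theta(L,t)|^2 \right) dt.
\end{equation*}
Performing the change of variable $s = T-t$ on the right-hand side transforms this into
\begin{equation*}
\|\tilde W(0)\|^2_{\mathcal{H}_2} \leq C \int_0^T \left( |\chi(L,s)|^2 + |\eta(L,s)|^2 + |\Theta(L,s)|^2 \right) ds.
\end{equation*}
Finally, because $\mathcal{P}$ generates a group of isometries on $\mathcal{H}_2$ (Proposition \ref{teogeneinfiA}), the map $t \mapsto \|W(t)\|_{\mathcal{H}_2}$ is constant. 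In particular, $\|W(0)\|_{\mathcal{H}_2} = \|W(T)\|_{\mathcal{H}_2} = \|\tilde W(0)\|_{\mathcal{H}_2}$, and substituting this identity into the previous bound produces exactly \eqref{DESIIMPORTANTE_colorarioprobado}.

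The only step requiring care is the verification that time-reversal genuinely maps solutions of the backward problem \eqref{back} to solutions of the forward problem \eqref{abs}. As noted above, this is automatic because every time derivative in the interior equations and in the dynamic boundary conditions is of even order, so the form-invariance under $t \mapsto T-t$ is immediate and only first-derivative components pick up a sign, which does not affect the $\mathcal{H}_2$-norm. Granting this routine check, the corollary follows from Theorem \ref{teoprindesiusados} without any further argument; no additional compactness-uniqueness reasoning or multiplier estimates are required beyond those already invoked in Section~\ref{sec5}.
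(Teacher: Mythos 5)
Your proof is correct and matches the paper's intent: the paper offers no written argument, simply asserting the corollary is ``a direct consequence'' of Theorem~\ref{teoprindesiusados}, and your time-reversal-plus-isometry reasoning is a valid way to make that precise. One small simplification worth noting: since the adjoint system \eqref{back} in second-order form coincides with the homogeneous system \eqref{2bbm-hom} and is merely specified by final data, its solution on $[0,T]$ is already a forward solution of \eqref{abs} with initial datum $W(0)=S(-T)W^{T}$, so Theorem~\ref{teoprindesiusados} applies verbatim with $U_0=W(0)$ and neither the explicit time reversal nor the isometry step is strictly needed.
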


The observability inequality established in Corollary \ref{finaldesforadforcontrolteo}, combined with the HUM method from control theory, allows us to obtain the second main outcome of this paper, namely, the controllability of the system stated in Theorem \ref{controlteorfinalnodemos}.

\section{Conclusion}
This paper considered two different problems related to a linear Rao-Nakra type sandwich beam, which consists of three coupled PDEs. An exponential stabilization result is obtained for the first problem, where the boundary conditions are static and subject to three time-dependent boundary delayed controls, while three time-varying interior damping controls are present. It would be desirable to investigate, in a future work, the stabilization problem with the presence of at most two interior damping controls. Furthermore, a promising research avenue is to tackle the problem where the boundary controls are of memory-type.

For the second problem, the boundary conditions are supposed to be dynamical. Then, a null controllability result is shown via the action of three boundary controls. Another interesting question is whether one can achieve the controllability result by means of at most two boundary controls.

\subsection*{Acknowledgements} The authors sincerely thank the anonymous reviewers for their careful reading and constructive comments, which have significantly improved the manuscript.

\subsection*{Funding.}G. Bautista was supported by FAPERJ under the program PDS-2024, grant number SEI-0260003/019497/2024. Capistrano–Filho was partially supported by CAPES-COFECUB program grant number 88887.879175/2023-00, CNPq grant numbers 301744/2025-4 and 421573/2023-, and PROPG (UFPE) \textit{via} PROAP resources. L\'imaco was partially supported by CNPq grant number 310860/2023-7 (Brazil). O. Sierra was supported by FAPERJ (Rio de Janeiro, Brazil) \textit{via} process  SEI 260003/000175/2024 and by Escola de Matemática Aplicada, Fundação Getúlio Vargas (Rio de Janeiro, Brazil).

\subsection*{Data Availability} No data were created or analyzed in this study.

\subsection*{Conflict of interest} The authors declare that they have no conflict of interest.

\end{document}